\DeclareSymbolFont{cyrletters}{OT2}{wncyr}{m}{n}
\DeclareMathSymbol{\Sha}{\mathalpha}{cyrletters}{"58}
\DeclareMathSymbol{\Che}{\mathalpha}{cyrletters}{"51}
\newcommand{\calHom}{\mathscr{H}\mathit{om}}
\newcommand{\Ga}{{\mathbf{G}}_{\rm{a}}}
\newcommand{\Gm}{{\mathbf{G}}_{\rm{m}}}
\DeclareMathOperator{\Pic}{Pic}
\DeclareMathOperator{\Ext}{Ext}
\DeclareMathOperator{\Hom}{Hom}
\DeclareMathOperator{\Spec}{Spec}
\DeclareMathOperator{\coker}{coker}
\newcommand*{\Z}{\ensuremath{\mathbf{Z}}}                        
\newcommand*{\Q}{\ensuremath{\mathbf{Q}}}                     
\newcommand*{\F}{\ensuremath{\mathbf{F}}}                        
\newcommand*{\C}{\ensuremath{\mathbf{C}}}                        
\newcommand*{\A}{\ensuremath{\mathbf{A}}}                        
\renewcommand*{\P}{\ensuremath{\mathbf{P}}}                        
\newcommand*{\calO}{\mathcal{O}}                                  
\numberwithin{equation}{section}
\newtheorem{theorem}{Theorem}[section]
\newtheorem{lemma}[theorem]{Lemma}
\newtheorem{proposition}[theorem]{Proposition}
\theoremstyle{definition}
\theoremstyle{remark}
  \newtheorem{remark}[theorem]{Remark}
\tikzset{commutative diagrams/.cd,
mysymbol/.style={start anchor=center,end anchor=center,draw=none}
}
\title{\textbf{PATHOLOGICAL BEHAVIOR OF ARITHMETIC INVARIANTS OF UNIPOTENT GROUPS}}
\author{Zev Rosengarten \thanks{While completing this work, the author was supported by an ARCS Scholar Award and by a Ric Weiland Graduate Fellowship. \newline
MSC 2010: primary 11R58; secondary 11R56, 11R34, 11E99. \newline
Keywords: Tamagawa numbers, linear algebraic groups, unipotent groups, Tate-Shafarevich sets. \newline
}}
\date{}
\begin{document}
\maketitle

\begin{abstract}
We show that all of the nice behavior for Tamagawa numbers, Tate-Shafarevich sets, and other arithmetic invariants of pseudo-reductive groups over global function fields proved in \cite{rospred} fails in general for non-commutative unipotent groups. We also give some positive results which show that Tamagawa numbers do exhibit some reasonable behavior for arbitrary connected linear algebraic groups over global function fields.
\end{abstract}

\tableofcontents{}

\section{Introduction}

\subsection{Basic notions}

Let $G$ be a connected linear algebraic group over a global field $k$. One of the major arithmetic quantities with which this paper is concerned is the so-called {\em Tamagawa number} of $G$, denoted $\tau(G)$. This is the volume of a certain adelic coset space associated to $G$ with respect to a certain canonical measure, the Tamagawa measure, and it contains important arithmetic information about $G$. For more details, see \cite[Chap. I]{oesterle} and \cite[\S 1.1]{rospred}.

Also important for us will be the Tate-Shafarevich set of $G$, defined as
\[
\Sha^1(k, G) := \ker\left({\rm{H}}^1(k, G) \longrightarrow \prod_v {\rm{H}}^1(k_v, G)\right),
\]
the set of $G$-torsors over $k$ that have points everywhere locally (so $\Sha^1(k, G)$ measures the failure of the Hasse principle for $G$-torsors). We also denote the Tate-Shafarevich set by $\Sha^1(G)$ or simply $\Sha(G)$. Note that one may also define $\Sha(G)$ by the formula
\[
\Sha^1(k, G) := \ker\left({\rm{H}}^1(k, G) \longrightarrow {\rm{H}}^1(\A, G)\right)
\]
due to \cite[Prop.\,1.5]{rospred}.

The Tate-Shafarevich set of affine group schemes of finite type over global fields is finite. This finiteness is due to Borel and Serre over number fields (where one may easily reduce to the reductive case), Harder and Oesterl\'e in the reductive and solvable cases respectively over global function fields, and Conrad in the general case over global function fields; see \cite[\S 1.3]{conrad} and the references therein for more details. (The analogous finiteness of Tate-Shafarevich sets for abelian varieties is a major open problem.)

Modulo some results that were unknown at the time but have since been proven, Sansuc \cite[Th.\,10.1]{sansuc} obtained an elegant formula for Tamagawa numbers of connected reductive groups. He showed that for such $G$ one has
\begin{equation}
\label{sansucformula}
\tau(G) = \frac{\# \Pic(G)}{\# \Sha(G)}.
\end{equation}
Over number fields, this is the end of the story, at least for Tamagawa numbers of linear algebraic groups: one easily deduces that Sansuc's formula holds for all connected linear algebraic groups over number fields. The reason is that over a number field (or, more generally, over any perfect field), every linear algebraic group is an extension of a reductive group by a split unipotent group (i.e., a unipotent group admitting a filtration with successive quotients isomorphic to the additive group $\Ga$). Over imperfect fields, however, such as global function fields, this fails completely, and in fact, Sansuc's formula (\ref{sansucformula}) fails to hold in general even for forms of $\Ga$ over global function fields.

Nevertheless, one may obtain a suitable replacement for (\ref{sansucformula}) for a large class of groups, namely groups that are either commutative or pseudo-reductive. Recall that a connected linear algebraic group over a field $k$ is said to be {\em pseudo-reductive} if its $k$-unipotent radical $\mathscr{R}_{u, k}(G)$ -- which is defined to be the maximal smooth connected normal unipotent $k$-subgroup of $G$ -- is trivial. (Reductivity means that the same holds over $\overline{k}$.) Over perfect fields, reductivity and pseudo-reductivity agree, because Galois descent implies that the $\overline{k}$-unipotent radical of $G$ descends all the way down to $k$, but over imperfect fields there are many examples of groups that are reductive but not pseudo-reductive.

In order to formulate a replacement for (\ref{sansucformula}), we introduce a subgroup of $\Pic(G)$ which keeps track of the group structure on $G$, namely, for a smooth connected group scheme $G$ over a field $k$, let
\begin{equation}
\label{extdefn}
\Ext^1(G, \Gm) := \{ \mathscr{L} \in \Pic(G) \mid m^*\mathscr{L} \simeq \pi_1^*\mathscr{L} \otimes \pi_2^*\mathscr{L}\},
\end{equation}
where $m, \pi_i: G \times G \rightarrow G$ ($i = 1, 2$) are the multiplication and projection maps respectively. Thus, $\Ext^1(G, \Gm)$ is the group of line bundles on $G$ that are universally translation-invariant modulo line bundles on the base. The reason for the notation $\Ext^1(G, \Gm)$ is that any extension of $G$ by $\Gm$ is in particular a $\Gm$-torsor over $G$, hence we get a homomorphism $\Ext^1_{{\rm{Yon}}}(G, \Gm) \rightarrow \Pic(G)$, where the ``Yoneda Ext'' group $\Ext^1_{{\rm{Yon}}}(G, \Gm)$ is the set of $k$-isomorphism classes of extensions of $G$ by $\Gm$ made into a group via Baer sum. (Any such extension $E$ is automatically represented by a smooth connected affine $k$-group that is a central extension of $G$ by $\Gm$: the automorphism scheme of $\Gm$ is \'etale, hence the $k$-group map $E \rightarrow {\rm{Aut}}_{\Gm/k}$ induced by conjugation is constant.) This induces an isomorphism $\Ext^1_{{\rm{Yon}}}(G, \Gm) \xrightarrow{\sim} \Ext^1(G, \Gm)$. (This is essentially \cite[Thm.\,4.12]{colliot-thelene}, though the result there is stated only for $G$ of multiplicative type. The proof is the same in general, using Chevalley's Unit Theorem.)

When $G$ is commutative, the notation $\Ext^1(G, \Gm)$ may also be used to denote the derived-functor Ext in the category of fppf abelian sheaves on $\Spec(k)$. This latter Ext group is canonically isomorphic to the group $\Ext^1(G, \Gm)$ defined above \cite[Prop.\,4.3]{rospic}, so there is no ambiguity in the notation. Another nice property of the group $\Ext^1(G, \Gm)$ is that it is finite for any connected linear algebraic group $G$ over a {\em global} field $k$ \cite[Thm.\,1.1]{rospic}. This result is truly arithmetic in nature, as it fails over every local function field and over every imperfect separably closed field \cite[Prop.\,5.9]{rospic}. Let us also remark that if $k$ is a perfect field then the inclusion $\Ext^1(G, \Gm) \subset \Pic(G)$ is an equality for any connected linear algebraic $k$-group $G$ (see the paragraph preceding the statement of Theorem 1.5 in \cite{rospic}). In particular, this equality holds when $k$ is a number field.

\subsection{Positive results for commutative and pseudo-reductive groups}
\label{posresults}

Now we recall the main results of \cite{rospred}. One of these is the following generalization of Sansuc's formula (\ref{sansucformula}) to pseudo-reductive (and commutative) groups:

\begin{theorem}$($\cite[Thm.\,1.1]{rospred}$)$
\label{tamagawaformula}
Let $G$ be a connected linear algebraic group over a global field $k$. Assume that $G$ is either commutative or pseudo-reductive. Then
\[
\tau(G) = \frac{\# \Ext^1(G, \Gm)}{\# \Sha(G)}.
\]
\end{theorem}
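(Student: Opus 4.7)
The plan is to split the theorem into the commutative and pseudo-reductive cases, both ultimately leveraging Sansuc's classical formula $(\ref{sansucformula})$ for connected reductive $G$. For such $G$ it is a classical fact (a consequence of Rosenlicht's theorem on units on reductive groups, combined with the see-saw / translation-invariance argument) that every line bundle is translation-invariant up to pullback from $\Spec(k)$, so $\Pic(G) = \Ext^1(G, \Gm)$, and Sansuc's formula immediately yields the desired equality in the reductive case.

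For pseudo-reductive $G$ the strategy is to reduce to the reductive case via the Conrad--Gabber--Prasad structure theorem. A standard pseudo-reductive group is built from a reductive group over a finite separable extension $k'/k$ by Weil restriction together with a commutative modification, so I would verify that each of $\tau(\cdot)$, $\Sha(\cdot)$, and $\Ext^1(\cdot, \Gm)$ is compatible with $\R_{k'/k}$ along separable extensions: Tamagawa numbers via Oesterl\'e's comparison of adelic measures, $\Sha$ via a Shapiro-style argument in flat cohomology, and $\Ext^1$ via the adjunction between $\R_{k'/k}$ and base change. The commutative modification is then absorbed by the commutative case, and the exceptional basic exotic and basic non-reduced pseudo-reductive groups in characteristics $2$ and $3$ would need to be handled by direct inspection, likely using their explicit description.

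For commutative $G$ my approach would use the d\'evissage into a torus part and a commutative unipotent part, combined with multiplicativity of $\tau$ in exact sequences and a Poitou--Tate style long exact sequence connecting $\Sha$ and $\Ext^1$ in the two positions. For tori the formula is Ono's theorem, once one identifies the derived-functor $\Ext^1(T, \Gm)$ with $H^1(k, \hat T)$. For commutative unipotent $U$, the finiteness of $\Ext^1(U, \Gm)$ from \cite[Thm.\,1.1]{rospic} and a duality between $\Ext^1$ and $\Sha$ developed there supply the missing input. The hard part will be precisely this commutative unipotent case over imperfect global function fields, where no reduction to tori or vector groups is available and the classical cohomological duality theorems do not apply directly; here one must deploy the finer $\Sha$/$\Ext^1$ duality machinery of \cite{rospic} to simultaneously control both sides of the equation.
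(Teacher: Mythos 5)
This theorem is not proved in the paper you are working with: it is stated with an explicit citation tag $($\cite[Thm.\,1.1]{rospred}$)$ and recalled in \S\ref{posresults} (``Positive results for commutative and pseudo-reductive groups'') as background from the companion paper \cite{rospred}. The entire point of the present paper is to show that this result and its relatives fail for non-commutative wound unipotent groups; nowhere does it reprove or even sketch the proof of Theorem \ref{tamagawaformula}. So there is no proof here to compare your proposal against.

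That said, a few comments on your sketch in case they are useful. The reduction of the pseudo-reductive case to the reductive case via the Conrad--Gabber--Prasad structure theory is much more delicate than ``compatibility with $\R_{k'/k}$'' suggests: a standard pseudo-reductive group is a quotient $(\R_{k'/k}(G') \rtimes C)/\R_{k'/k}(T')$ by a central subgroup, so even after handling Weil restriction one must control how $\tau$, $\Sha$, and $\Ext^1(\cdot,\Gm)$ behave under a central pushout/quotient by a commutative group, which is not a formal corollary of the reductive and commutative cases taken separately; this is where the real work lies, not only in the exotic/non-reduced cases. Likewise, in the commutative case your dévissage into a torus and a commutative unipotent group does not respect $\Ext^1(\cdot,\Gm)$ in a simple way over imperfect fields, because wound commutative unipotent groups contribute nontrivially to $\Ext^1$ and to $\Sha$ in a coupled manner, so the ``Poitou--Tate style long exact sequence connecting $\Sha$ and $\Ext^1$'' needs to be made precise (it is exactly the global duality machinery of \cite{rostateduality} that is used in \cite{rospred} for this, not a formal dévissage). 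You have correctly located the hard point (commutative wound unipotent groups over global function fields), but the overall architecture is a plausible outline rather than a proof, and you would need to consult \cite{rospred} itself for the actual argument.
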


\begin{remark}
\label{bsd}
Theorem \ref{tamagawaformula} is the analogue for linear algebraic groups of the Birch and Swinnerton-Dyer Conjecture; see the discussion after Theorem 1.1 in \cite{rospred}. 
\end{remark}

In \cite{rospred}, it is also shown that Tamagawa numbers and Tate-Shafarevich sets behave well with respect to inner twisting. Recall that if $G$ is a group scheme over a field $k$, then an {\em inner form} of $G$ is a $k$-form $G'$ of $G$ that is in the image of the map ${\rm{H}}^1(k, G/Z_G) \rightarrow {\rm{H}}^1(k, {\rm{Aut}}_{G/k})$, where $Z_G$ is the center of $G$, the last set classifies $k$-forms of $G$, and the map on cohomology sets is induced by the map $G/Z_G \rightarrow {\rm{Aut}}_{G/k}$ which sends $g$ to conjugation by $G$. We also say that $G'$ is obtained from $G$ by inner twisting. If $G$ is smooth, then so is $G/Z_G$, hence an inner form of $G$ is a $k_s/k$-form. \cite{rospred} proves the following result:

\begin{theorem}
\label{innerinvariance}$($\cite[Thm.\,1.4]{rospred}$)$
Let $G$ be a pseudo-reductive group over a global function field $k$. Then $\tau(G)$ and $\# \Sha(G)$ are invariant under inner twisting. That is, if $G'$ is an inner form of $G$, then $\tau(G) = \tau(G')$ and $\# \Sha(G) = \# \Sha(G')$.
\end{theorem}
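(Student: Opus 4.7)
The plan is to use Theorem \ref{tamagawaformula} as a lever. Since both $G$ and its inner form $G'$ are pseudo-reductive, that formula applies to each, so it suffices to prove that any two of the three quantities $\tau(\cdot)$, $\#\Sha(\cdot)$, and $\#\Ext^1(\cdot, \Gm)$ agree for $G$ and $G'$; the third equality then follows automatically. I would aim to establish invariance of $\#\Ext^1$ and of $\tau$ directly, and then read off invariance of $\#\Sha$ from the formula.

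For invariance of $\#\Ext^1(G, \Gm)$, the key is the geometric description (\ref{extdefn}) of this group as universally translation-invariant line bundle classes on $G$. Any inner automorphism of $G$ is the composition of a left and a right translation, and translations preserve both translation-invariance and the isomorphism class of a line bundle, so conjugation by $G/Z_G$ acts trivially on $\Ext^1(G_{k_s}, \Gm)$. Choosing a $k_s$-isomorphism $\phi: G_{k_s} \xrightarrow{\sim} G'_{k_s}$ realizing the inner-twisting cocycle $c \in Z^1(k, G/Z_G)$, the associated automorphism cocycle $\sigma \mapsto \phi^{-1} \sigma(\phi)$ takes values in inner automorphisms, so the pullback $\phi^*$ is Galois-equivariant on $\Ext^1$. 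Galois descent then yields $\Ext^1(G, \Gm) \cong \Ext^1(G', \Gm)$ as finite abelian groups.

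For invariance of $\tau$, I would appeal to the Conrad--Gabber--Prasad structure theory of pseudo-reductive groups. Away from certain small-characteristic exceptional cases, a pseudo-reductive $G$ over $k$ arises as a central modification of $\R_{k'/k}(H) \rtimes C$, with $H$ reductive over a finite \'etale extension $k'/k$ and $C$ commutative pseudo-reductive over $k$. Inner twists of $G$ correspond in a controlled way to inner twists of $H$ (the commutative factor $C$ has no nontrivial inner twists), and the Tamagawa number behaves well under Weil restriction and central modifications. This reduces invariance of $\tau$ under inner twisting to the classical invariance theorem for reductive groups, due to Ono for tori and Kottwitz in general. Finally, combining Theorem \ref{tamagawaformula} for both $G$ and $G'$ with the two invariances established above delivers $\#\Sha(G) = \#\Sha(G')$.

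The main obstacle is the second step: carefully matching Tamagawa measures across the standard presentation, tracking how an invariant top-degree form on $G$ corresponds to forms on $\R_{k'/k}(H)$ and on $C$, and handling the exceptional pseudo-reductive groups in characteristics $2$ and $3$ not covered by the standard formalism (these are treated via the Conrad--Prasad classification of exceptional pseudo-reductive groups). Once this bookkeeping is in place, the problem cleanly reduces to the known reductive case, and Step 1 combined with Theorem \ref{tamagawaformula} closes the argument without any non-abelian cohomological fiber counting.
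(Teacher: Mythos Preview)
This theorem is not proved in the present paper at all: it is quoted verbatim from \cite[Thm.\,1.4]{rospred} and used as a black box (notably in the proof of Theorems~\ref{tautwistpowerofpthm} and~\ref{tauboundedinnerthm}). There is therefore no proof here to compare your proposal against; the actual argument lives in \cite{rospred}.

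That said, two remarks on your sketch are in order. First, your plan hinges on applying Theorem~\ref{tamagawaformula} to both $G$ and $G'$, but in \cite{rospred} the proofs of Theorems~\ref{tamagawaformula} and~\ref{innerinvariance} are intertwined, so you should check that you are not invoking a result whose proof in \cite{rospred} already uses the very invariance you are trying to establish. Second, your ``Galois descent'' step for $\Ext^1$ is too quick: the functor $G \mapsto \Ext^1(G,\Gm)$ does not in general satisfy naive Galois descent. Indeed, the exact sequence~(\ref{H^1(G^)=kerExt}) in the paper,
\[
0 \longrightarrow {\rm H}^1(k,\widehat{G}) \longrightarrow \Ext^1(G,\Gm) \longrightarrow \Ext^1(G_{\overline{k}},(\Gm)_{\overline{k}}),
\]
shows that extensions which become trivial over $\overline{k}$ are governed by ${\rm H}^1(k,\widehat{G})$, so $\Ext^1(G,\Gm)$ is not simply the Galois-fixed part of $\Ext^1(G_{k_s},\Gm)$. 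One can salvage the comparison by noting that $\widehat{G}=\widehat{G^{\rm ab}}$ and that inner twisting leaves $G^{\rm ab}$ unchanged, and then arguing separately with the image in $\Ext^1(G_{\overline{k}},\Gm)$, but this requires more than the one-line descent you wrote. Your Step~2 (reducing $\tau$-invariance to the reductive case via the structure theory of \cite{cgp}) is indeed the heart of the matter and is broadly how \cite{rospred} proceeds, though the bookkeeping you allude to---especially for the exotic and non-standard pseudo-reductive groups in characteristics $2$ and $3$---is substantial.
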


\begin{remark}
\label{fibersoftheta}
Here is a consequence of Theorem \ref{innerinvariance}. Consider the map 
\begin{equation}
\label{thetamap}
\theta_G: {\rm{H}}^1(k, G) \longrightarrow \coprod_v {\rm{H}}^1(k_v, G).
\end{equation}
By definition, $\Sha(G) = \ker(\theta_G)$, but what about the other fibers? For $x \in {\rm{H}}^1(k, G)$, the fiber $\theta_G^{-1}(\theta_G(x))$ containing $x$ is in bijection with the set $\Sha(G_x)$, where $G_x$ is the $k$-form of $G$ obtained by taking the image of $x$ under the map ${\rm{H}}^1(k, G) \rightarrow {\rm{H}}^1(k, G/Z_G) \rightarrow {\rm{H}}^1(k, {\rm{Aut}}_{G/k})$. In particular, $G_x$ is an inner form of $G$! Theorem \ref{innerinvariance} therefore implies that when $G$ is pseudo-reductive, all of the nonempty fibers of $\theta_G$ have the same size (though that theorem is strictly stronger than this, since the map ${\rm{H}}^1(k, G) \rightarrow {\rm{H}}^1(k, G/Z_G)$ need not be surjective). 
\end{remark}

In light of Remark \ref{fibersoftheta}, it is natural to ask for which $\alpha \in {\rm{H}}^1(\A, G) = \coprod_v {\rm{H}}^1(k_v, G)$ the fiber $\theta_G^{-1}(\alpha)$ is nonempty. In order to answer this question, we note that for any smooth connected group scheme $G$ over a global field $k$, we have a complex of pointed sets
\begin{equation}
\label{H^1Extcomplex}
{\rm{H}}^1(k, G) \longrightarrow {\rm{H}}^1(\A, G) \longrightarrow \Ext^1(G, \Gm)^*
\end{equation}
which we will now define. The map ${\rm{H}}^1(k, G) \rightarrow {\rm{H}}^1(\A, G)$ is induced by the diagonal inclusion $k \hookrightarrow \A$. To define the second map, consider an extension
\begin{equation}
\label{extensioneqn1}
1 \longrightarrow \Gm \longrightarrow E \longrightarrow G \longrightarrow 1
\end{equation}
(which is automatically central since connectedness of $G$ implies connectedness of $E$) and an element $\alpha \in {\rm{H}}^1(\A, G)$. We obtain for each place $v$ of $k$ the element $\alpha_v \in {\rm{H}}^1(k_v, G)$; $\alpha_v$ is trivial for all but finitely many $v$ by \cite[Prop.\,1.5]{rospred}. Due to the centrality of the extension (\ref{extensioneqn1}), we get a connecting map ${\rm{H}}^1(k_v, G) \rightarrow {\rm{H}}^2(k_v, \Gm)$, this ${\rm{H}}^2$ being $\Q/\Z$, $\frac{1}{2}\Z/\Z$, or $0$, depending on whether $k_v$ is non-archimedean, $\mathbf{R}$, or $\C$ (by taking local Brauer invariants). 

Thus we get for each place $v$ of $k$ an element of $\Q/\Z$, all but finitely many of which are $0$. Adding these up produces an element of $\Q/\Z$. This procedure yields a map ${\rm{H}}^1(\A, G) \times \Ext^1(G, \Gm) \rightarrow \Q/\Z$. One may check that this map is additive in the second argument, hence induces a map of pointed sets ${\rm{H}}^1(\A, G) \rightarrow \Ext^1(G, \Gm)^*$. This defines the second map in (\ref{H^1Extcomplex}). Further, the image of any element of ${\rm{H}}^1(k, G)$ under this map is $0$, since the sum of the local invariants of a global Brauer class is $0$ by class field theory. 

\begin{remark}
\label{compatiblewithtateduality}
Before continuing, we note that the complex (\ref{H^1Extcomplex}) is compatible with global Tate duality in the following sense. Given a commutative 
affine group scheme $G$ of finite type over a field $k$, let $$\widehat{G} := \calHom(G, \Gm)$$ denote the fppf $\Gm$-dual sheaf. We have a functorial (in $G$ and $k$) exact sequence 
\begin{equation}
\label{H^1(G^)=kerExt}
0 \longrightarrow {\rm{H}}^1(k, \widehat{G}) \longrightarrow \Ext_{{\rm{cent}}}(G, \Gm) \longrightarrow \Ext_{{\rm{cent}}}(G_{\overline{k}}, (\Gm)_{\overline{k}})
\end{equation}
defined as follows. (When $G$ is disconnected, we have to specify that we are only considering central extensions of $G$ by $\Gm$, as arbitrary such extensions need not be central.) 

A central extension $E$ of $G$ by $\Gm$ splits fppf locally over $k$ if and only if it splits over $\overline{k}$, by the Nullstellensatz and standard spreading-out arguments. Thus, 
\[
\ker \left(\Ext_{{\rm{cent}}}(G, \Gm) \rightarrow \Ext_{{\rm{cent}}}(G_{\overline{k}}, (\Gm)_{\overline{k}})\right)
\]
consists of those extensions that split fppf locally, i.e., the fppf forms of the trivial extension $E = \Gm \times G$ (with the obvious extension structure). But one easily checks that the automorphism functor of the trivial extension (as an extension of $G$ by $\Gm$) is $\widehat{G}$. It follows that the above kernel is canonically (up to a universal choice of sign) isomorphic to ${\rm{H}}^1(k, \widehat{G})$. 

When $k$ is a global field, we have a complex
\begin{equation}
\label{H^1exactsequence1}
{\rm{H}}^1(k, G) \longrightarrow {\rm{H}}^1(\A, G) \longrightarrow {\rm{H}}^1(k, \widehat{G})^*,
\end{equation}
where the first map is induced by the diagonal inclusion $k \hookrightarrow \A$, and the second by cupping everywhere locally and adding the invariants. That is, given $\alpha \in {\rm{H}}^1(\A, G)$ and $\beta \in {\rm{H}}^1(k, \widehat{G})$, we have for each place $v$ of $k$ the cup product pairing 
\[
{\rm{H}}^1(k_v, G) \times {\rm{H}}^1(k_v, \widehat{G}) \rightarrow {\rm{H}}^2(k_v, \Gm) \xrightarrow{\sim} \Q/\Z,
\]
where the last map is once again the Brauer invariant. Thus, by taking the cup product of $\alpha_v$ and $\beta_v$ for each $v$ and then adding the results, we obtain the second map in (\ref{H^1exactsequence1}) above. (By the compatibility of cohomology with direct limits of rings \cite[Prop.\,D.0.1]{rostateduality}, both $\alpha_v$ and $\beta_v$ lift to ${\rm{H}}^1(\calO_v, \cdot)$ for all but finitely many $v$, hence their cup product lands in ${\rm{H}}^2(\calO_v, \Gm) = 0$, so the sum above contains only finitely many nonzero terms.) The sequence (\ref{H^1exactsequence1}) is a complex, once again because the sum of the local invariants of a global Brauer class is $0$. Part of the statement of global Tate duality is that the sequence (\ref{H^1exactsequence1}) is exact for any affine commutative $k$-group $G$ of finite type \cite[Thm.\,1.2.8]{rostateduality}. 

At any rate, the point we would like to make here is that for connected commutative affine $k$-group schemes $G$ of finite type,  the two complexes (\ref{H^1Extcomplex}) and (\ref{H^1exactsequence1}) are compatible via the first map in (\ref{H^1(G^)=kerExt}) (with the appropriate universal choice of sign). This compatibility may be checked by using the description of cup product (on ${\rm{H}}^1$) in terms of \v{C}ech cohomology.
\end{remark}

The following result tells us which fibers of the map $\theta_G$  in (\ref{thetamap}) are nonempty:

\begin{theorem}$($\cite[Thm.\,1.6]{rospred}$)$
\label{pseudoredcomplexexact}
Let $k$ be a global function field, and let $G$ be a connected linear algebraic $k$-group that is either commutative or pseudo-reductive. Then the complex
\[
{\rm{H}}^1(k, G) \longrightarrow {\rm{H}}^1(\A, G) \longrightarrow \Ext^1(G, \Gm)^*
\]
defined in $(\ref{H^1Extcomplex})$ is an exact sequence of pointed sets. That is, the kernel of the second map is the image of the first.
\end{theorem}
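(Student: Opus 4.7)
My plan is to deduce this case directly from classical global Tate duality. By Remark \ref{compatiblewithtateduality}, for $G$ connected commutative our complex (\ref{H^1Extcomplex}) fits into a commutative diagram with the Poitou--Tate complex (\ref{H^1exactsequence1}), with the right-hand vertical arrow being the transpose of the canonical injection ${\rm{H}}^1(k, \widehat{G}) \hookrightarrow \Ext^1(G, \Gm)$ of (\ref{H^1(G^)=kerExt}). Since (\ref{H^1exactsequence1}) is exact by global Tate duality, any class $\alpha \in {\rm{H}}^1(\A, G)$ killed by the map to $\Ext^1(G, \Gm)^*$ is \emph{a fortiori} killed in ${\rm{H}}^1(k, \widehat{G})^*$, and hence already lifts to ${\rm{H}}^1(k, G)$. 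Combined with the complex property (the reciprocity law of class field theory), this yields exactness immediately, with no surjectivity hypothesis on ${\rm{H}}^1(k, \widehat{G}) \to \Ext^1(G, \Gm)$ required.

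\textbf{Pseudo-reductive case.} Here I would reduce to the commutative case via the exact sequence
\[
1 \to \mathscr{D}(G) \to G \to C \to 1,
\]
where $C := G/\mathscr{D}(G)$ is commutative pseudo-reductive and $\mathscr{D}(G)$ is perfect pseudo-reductive (by the Conrad--Gabber--Prasad structure theory). Given $\alpha \in {\rm{H}}^1(\A, G)$ annihilating $\Ext^1(G, \Gm)$, its image $\bar\alpha \in {\rm{H}}^1(\A, C)$ annihilates $\Ext^1(C, \Gm)$ via the pullback $\Ext^1(C, \Gm) \to \Ext^1(G, \Gm)$, so the commutative case supplies $\bar\beta \in {\rm{H}}^1(k, C)$ with $\bar\beta \mapsto \bar\alpha$. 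Lifting $\bar\beta$ to a class in ${\rm{H}}^1(k, G)$ matching $\alpha$ then reduces, via a twisting argument in the nonabelian cohomology six-term sequence, to the same exactness statement for the (possibly inner-twisted) perfect pseudo-reductive group $\mathscr{D}(G)$.

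\textbf{Main obstacle.} The crux is therefore the perfect pseudo-reductive case, since such groups admit no nontrivial commutative quotient to feed back into Tate duality. The natural strategy is to pass to a central extension
\[
1 \to \mu \to \widetilde{G} \to \mathscr{D}(G) \to 1
\]
by a group $\mu$ of multiplicative type -- a ``pseudo-simply-connected'' cover -- chosen so that $\widetilde{G}$ satisfies a Hasse principle $\Sha^1(k, \widetilde{G}) = 1$, a pseudo-reductive analogue of Harder's theorem for simply connected semisimple groups. The commutative case applied to $\mu$ should then close the argument. Producing such a cover in the pseudo-reductive setting leans heavily on the Conrad--Gabber--Prasad structure theorem (in particular, Weil restriction along finite purely inseparable extensions), and the careful matching of signs and compatibilities of the Brauer pairings against $\Ext^1(\cdot,\Gm)$ for $G$, $C$, $\mathscr{D}(G)$, and $\mu$ through the ladder is likely to account for the bulk of the remaining technical work.
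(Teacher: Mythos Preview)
This theorem is not proved in the present paper; it is merely quoted from \cite[Thm.\,1.6]{rospred} as background for the counterexamples that follow. So there is no proof here to compare against, and any honest assessment must be of your outline on its own terms.

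Your commutative case is correct and clean: since ${\rm H}^1(k,\widehat{G}) \hookrightarrow \Ext^1(G,\Gm)$ by (\ref{H^1(G^)=kerExt}), a class $\alpha \in {\rm H}^1(\A,G)$ that kills all of $\Ext^1(G,\Gm)$ in particular kills ${\rm H}^1(k,\widehat{G})$, and then exactness of (\ref{H^1exactsequence1}) (global Tate duality, cited in Remark~\ref{compatiblewithtateduality}) gives the lift. Nothing more is needed here.

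For the pseudo-reductive case your outline is in the right spirit but is only an outline, and there is a genuine gap in the reduction step you sketch. After producing $\bar\beta \in {\rm H}^1(k,C)$ and twisting, the residual adelic class lives in ${\rm H}^1(\A,\mathscr{D}(G)')$ for an inner twist $\mathscr{D}(G)'$; to apply the theorem inductively you must know that this residual class is annihilated by $\Ext^1(\mathscr{D}(G)',\Gm)$. You have not argued this, and it does not follow formally from the hypothesis on $\alpha$, because the restriction map $\Ext^1(G,\Gm) \to \Ext^1(\mathscr{D}(G),\Gm)$ need not be surjective (indeed $\Ext^1(\mathscr{D}(G),\Gm)$ can be nonzero even though $\mathscr{D}(G)$ is perfect: it injects into $\Ext^1(\mathscr{D}(G)_{\overline{k}},\Gm)$, which detects the fundamental group of the reductive quotient). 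So the ``reduces to the same exactness statement for $\mathscr{D}(G)$'' step is not yet justified. Your instinct to pass to a pseudo-simply-connected cover and invoke a Hasse principle for it is the right one, and is indeed the heart of the argument in \cite{rospred}; but making this precise requires the Conrad--Gabber--Prasad structure theory in a serious way, not just as a black box, together with careful bookkeeping of how the pairings interact with Weil restriction and with the center. What you have written is a plausible roadmap, not a proof.
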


\subsection{Pathologies with unipotent groups}

Given the results of \S \ref{posresults}, it is natural to ask to what extent these results may be extended beyond the commutative and pseudo-reductive settings; that is, may they be generalized to arbitrary connected linear algebraic groups? Over number fields, the general case follows easily from the reductive case, because over such fields every connected linear algebraic group is an extension of a reductive group by a {\em split} unipotent group. The analogous statement over function fields is that every connected linear algebraic group is an extension of a pseudo-reductive group by a unipotent group. The problem is that this unipotent group need not be split. In fact, unipotent groups over imperfect fields can be extremely complicated. {\em The purpose of the present work is to show that all of the main results of \cite{rospred} discussed in \S \ref{posresults} fail even for general wound non-commutative 2-dimensional unipotent groups.} (For a discussion of woundness, see the beginning of \S \ref{gabbergps}.) We now discuss each of these failures in turn.

First, Theorem \ref{tamagawaformula} fails.

\begin{theorem}
\label{tamformulafails}
For every prime number $p$, there is a global function field $k$ of characteristic $p$ and a wound non-commutative $2$-dimensional unipotent group $U$ over $k$ for which Theorem $\ref{tamagawaformula}$ fails to hold. That is,
\[
\tau(U) \neq \frac{\# \Ext^1(U, \Gm)}{\# \Sha(U)}.
\]
\end{theorem}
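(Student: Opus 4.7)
The strategy is to construct an explicit example and compute each of the three invariants in Sansuc's formula separately, in order to exhibit a discrepancy.

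I would start by fixing a global function field $k$ of characteristic $p$ and choosing two one-dimensional wound commutative unipotent $k$-groups $V_1, V_2$ --- for instance, non-trivial forms of $\Ga$ cut out by equations of the type $y^{p^n} - y = c x^{p^m}$ --- whose Tamagawa numbers, $\Ext^1(-, \Gm)$, and Tate-Shafarevich sets are computable via the methods of \cite{rospred} and satisfy Sansuc's formula by Theorem \ref{tamagawaformula}. I would then construct $U$ as a central extension
\[
1 \to V_1 \to U \to V_2 \to 1
\]
classified by a $2$-cocycle whose induced commutator pairing $V_2 \times V_2 \to V_1$ is nonzero. In characteristic $p$ such alternating biadditive pairings genuinely exist (built from polynomials such as $xy^p - x^p y$ and then transferred through the wound quotient maps), so $U$ is non-commutative. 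Moreover, $U$ is wound: any $\Ga \subseteq U$ would have to map trivially to the wound quotient $V_2$, hence lie in the wound subgroup $V_1$, yielding a contradiction.

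With this $U$ in hand I would compute each invariant separately. The Tamagawa number is $\tau(U) = \tau(V_1) \tau(V_2)$ by the standard multiplicativity of the Tamagawa measure in smooth short exact sequences of affine groups. For $\#\Ext^1(U, \Gm)$, the spectral sequence associated to the central extension, combined with the vanishing of $\Hom$ from unipotent groups to $\Gm$, reduces the computation to $\Ext^i(V_j, \Gm)$ ($i = 1, 2$, $j = 1, 2$), which are controlled by the commutative theory; the connecting maps are governed by the extension class. Finally, for $\#\Sha(U)$, the long exact sequence of pointed sets in Galois cohomology associated to $1 \to V_1 \to U \to V_2 \to 1$, together with twisting arguments as in the discussion after Theorem \ref{innerinvariance}, reduces the computation to local and global cohomology of the $V_i$, where classical tools (Tate duality for commutative wound unipotent groups, together with the results of \cite{rospred}) can be applied.

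Comparing the three quantities then yields the desired strict inequality in Sansuc's formula, with sufficient flexibility in the choice of the $V_i$ and the extension class to ensure the discrepancy for each prime $p$. The main difficulty I anticipate is the explicit computation of $\#\Sha(U)$ in the non-abelian setting: global Tate duality does not apply directly to $U$, so one must work through the non-abelian cohomology sequence by hand, keeping careful track of how successive twistings affect each term. A secondary subtlety is verifying that the intended non-zero commutator pairing genuinely descends from the ambient $\Ga \times \Ga$ construction through the wound quotient maps to $V_1$ and $V_2$ while preserving non-commutativity (and woundness) of the resulting $U$; once these technicalities are in place, the numerical comparison producing the failure of Sansuc's formula is essentially bookkeeping.
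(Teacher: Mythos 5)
Your overall approach is the same as the paper's: take a central extension $1 \to W \to U \to V \to 1$ of wound one-dimensional unipotent groups with a nonzero biadditive pairing (this is exactly Gabber's construction $U_a$), exploit Theorem \ref{tamagawaformula} on the commutative layers $W$ and $V$, and compare. Your woundness argument for $U$ is also fine. However, the proposal has two concrete gaps that prevent it from actually reaching a contradiction with Sansuc's formula.

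First, your assertion that $\tau(U)=\tau(W)\tau(V)$ by ``standard multiplicativity of the Tamagawa measure'' is false in general. The correct statement is Oesterl\'e's formula (Lemma \ref{tamsinsequences}), which carries correction terms: multiplicativity holds only after verifying that $\coker(\widehat{j})=0$, that $\ker(\Sha^1(j))$ is trivial, and that the adelic coset space $V(\A)/\pi(U(\A))V(k)$ is trivial. The first is easy ($\widehat{W}(k)=0$ since $W$ is unipotent), but the last two require the explicit product structure $U\simeq W\times V$ as $V$-schemes, giving surjectivity of $U(k)\to V(k)$ and $U(\A)\to V(\A)$ (and hence trivial connecting maps $\delta\colon V(k)\to{\rm H}^1(k,W)$, etc.), which in turn feeds into the proof that $\Sha(W)\to\Sha(U)$ is a bijection. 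None of this is automatic, and you would need a choice of $k$ and $a$ making $\Sha(V)=0$ to run the argument cleanly --- the paper uses $k=\F_{p^{2n}}(T)$, $a=T(T-1)$ and proves $\Sha(V)=0$ by an explicit strong-approximation argument.

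Second, and more fundamentally, the mechanism producing the discrepancy is absent. The crucial point (Lemma \ref{Ext^1=Ext^1ab}) is that for any smooth connected unipotent $G$, the pullback $\Ext^1(G^{\ab},\Gm)\to\Ext^1(G,\Gm)$ is an isomorphism and the restriction $\Ext^1(G,\Gm)\to\Ext^1(\mathscr{D}G,\Gm)$ is zero --- because $\Pic(G_{\overline{k}})=0$ for split unipotent $G_{\overline{k}}$, so $\Ext^1(G,\Gm)\cong{\rm H}^1(k,\widehat{G})$ sees only the abelianization. Since $W=\mathscr{D}U$, $V=U^{\ab}$, and $\Sha(U)\cong\Sha(W)$, one obtains the exact identity
\[
\tau(U)=\frac{\#\Ext^1(W,\Gm)\cdot\#\Ext^1(U,\Gm)}{\#\Sha(U)},
\]
so the failure of Sansuc's formula is pinned down to the single nonvanishing $\Ext^1(W,\Gm)\neq 0$. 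Your proposal replaces this crisp identity with a vague ``spectral sequence with connecting maps governed by the extension class'' and a final ``bookkeeping'' step; without the $\Ext^1=\Ext^1_{\ab}$ lemma there is no reason the bookkeeping must come out nontrivially. Finally, establishing $\Ext^1(W,\Gm)\neq 0$ is itself a genuine computation (the paper uses the regular compactification and $\Pic$ for $p=2$, and a Tamagawa-number computation via Oesterl\'e's explicit formula for $p>2$); you cannot simply appeal to ``flexibility in the choice of $V_i$''.
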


For a more precise description of the groups $U$ in Theorem \ref{tamformulafails}, see Theorem \ref{tamfails}. Since Theorem \ref{tamagawaformula} is the linear algebraic group version of the Birch and Swinnerton-Dyer Conjecture (Remark \ref{bsd}), Theorem \ref{tamformulafails} suggests that Tamagawa numbers of unipotent groups exhibit rather odd behavior in general. With regard to their behavior under inner twisting (see Theorem \ref{innerinvariance}), the situation is even worse.

\begin{theorem}
\label{innerinvariancefailsforU}
Over every global function field $k$, there exist wound non-commutative $2$-dimensional unipotent groups $U$ having the following property: for every $\epsilon, M > 0$, there exists an inner form $U'$ of $U$ such that $\tau(U') < \epsilon$ and $\# \Sha(U') > M$.
\end{theorem}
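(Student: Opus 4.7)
The plan is to construct $U$ as a non-commutative central extension
\[
1 \longrightarrow Z \longrightarrow U \longrightarrow V \longrightarrow 1
\]
where $Z$ and $V$ are carefully chosen wound one-dimensional commutative unipotent $k$-groups (forms of $\Ga$), and the extension is specified by a non-symmetric biadditive pairing $V \times V \to Z$. Over any global function field one may choose such a $V$ with ${\rm{H}}^1(k,V)$ infinite -- this is a standard phenomenon for wound forms of $\Ga$, and can be arranged uniformly in $k$ via, e.g., kernels of suitable maps between Weil restrictions along a purely inseparable extension. Assuming $Z_U = Z$ (which holds for generic choice of the pairing), the inner forms of $U$ are parameterized, up to a finite defect, by classes in ${\rm{H}}^1(k, U/Z_U) = {\rm{H}}^1(k, V)$, giving an infinite family $\{U_\xi\}$ of inner forms of $U$.

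To control $\#\Sha(U_\xi)$, I would invoke Remark \ref{fibersoftheta}, which identifies $\#\Sha(U_\xi)$ with the size of the fiber of $\theta_U$ containing a lift $\widetilde\xi \in {\rm{H}}^1(k,U)$ of $\xi$. The long exact sequence
\[
\cdots \longrightarrow {\rm{H}}^1(k, Z) \longrightarrow {\rm{H}}^1(k, U) \longrightarrow {\rm{H}}^1(k, V) \longrightarrow {\rm{H}}^2(k, Z) \longrightarrow \cdots
\]
together with its adelic analogue shows that these fibers are governed by the local-global behavior of ${\rm{H}}^1(k, Z)$: given two lifts of the same class in ${\rm{H}}^1(k, V)$ that agree locally everywhere, their difference lies in the (possibly twisted) $\Sha(Z)$. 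By choosing $Z$ with arbitrarily large $\Sha(Z)$ -- again achievable uniformly by varying defining parameters of a wound form of $\Ga$ -- and by selecting $\xi$ lifting to many classes in ${\rm{H}}^1(k, U)$ sharing a common local image, one produces inner forms $U_\xi$ with $\#\Sha(U_\xi)$ as large as desired.

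For the Tamagawa side, I would apply Oesterlé's formalism for Tamagawa numbers in short exact sequences (\cite{oesterle}): since $Z$ and $V$ are the same for all inner forms, the products $\tau(Z)\tau(V)$ are constant, and $\tau(U_\xi)$ equals $\tau(Z)\tau(V)$ divided by an adelic correction factor measuring the failure of surjectivity of $U_\xi(\A) \to V(\A)$. This correction factor is exactly the object controlling the local-global obstruction in ${\rm{H}}^1(k,Z)$, and is the reciprocal of (a quantity growing with) $\#\Sha(U_\xi)$. Hence, as $\#\Sha(U_\xi) \to \infty$, the correction factor grows and $\tau(U_\xi) \to 0$. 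One may view this as a twisted reflection of the ``defect version'' of Theorem \ref{tamagawaformula}: the product $\tau(U_\xi) \cdot \#\Sha(U_\xi)$ remains comparable to a bounded expression in $\#\Ext^1(U_\xi, \Gm)$ (an invariant unchanged under inner twisting), forcing the inverse behavior.

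The main obstacle is the precise bookkeeping needed to simultaneously arrange both pathologies. Concretely, one must choose $V$, $Z$, and the extension class so that (i) the connecting map ${\rm{H}}^1(k,V) \to {\rm{H}}^2(k,Z)$ has an infinite kernel (so many $\xi$ actually lift to ${\rm{H}}^1(k,U)$); (ii) these lifts have locally-bunched images at the relevant places (so the fibers of $\theta_U$ are large rather than fragmenting into distinct inner forms); and (iii) the resulting adelic defect factor in Oesterl\'e's formula for $\tau(U_\xi)$ is genuinely unbounded. Each of these is a delicate condition on the particular wound forms used, and verifying them uniformly over arbitrary global function fields $k$ is where the real technical content will lie.
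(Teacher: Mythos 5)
Your high-level outline — building $U$ as a central extension of wound one-dimensional groups, invoking Oesterl\'e's formula for Tamagawa numbers in exact sequences, and then showing the product $\tau(U_\xi)\cdot\#\Sha(U_\xi)$ is bounded so that driving one quantity down forces the other up — does match the spirit of the paper's argument. But there are concrete gaps that would stop a referee.

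First, the middle paragraph is internally inconsistent: you propose to obtain inner forms $U_\xi$ with $\#\Sha(U_\xi)$ large ``by choosing $Z$ with arbitrarily large $\Sha(Z)$,'' but $Z$ is the center of $U$ and is preserved under inner twisting, so once $U$ is fixed the quantities $Z$, $\Sha(Z)$ are fixed and finite. Varying $Z$ means varying $U$, which does not produce inner forms of a single $U$; it produces a different group. The paper avoids this: $W = W_a$ and $V = V_a$ are fixed once $a$ is chosen, and the entire variation is in the twist parameter $\beta$. What actually controls $\#\Sha(U_\beta)$ is \emph{not} $\Sha(W)$ (bounded); rather, the $\Sha$-side is deduced indirectly from the $\tau$-side via the paper's Proposition on bounding $\tau \cdot \#\Sha$ (which in turn needs either $\Sha(G'')=0$ or $G''(k)$ finite as a hypothesis, verified here by the finiteness of $V_a(k)$ — a nontrivial Lemma).

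Second, and more seriously, the argument that the adelic defect factor in Oesterl\'e's formula is unbounded as $\beta$ varies is exactly the technical heart of the proof, and your write-up openly defers it (``this is where the real technical content will lie''). To actually verify (iii), one needs an explicit formula for the connecting map $\delta_\beta \colon V(k_v) \to {\rm{H}}^1(k_v, W)$ associated to the twisted extension, as a function of $\beta$. The paper proves a general formula (Prop.\ 5.3, built on a Galois-equivariant section that must be constructed by hand, especially in characteristic $2$ with $\F_4 \not\subset k$), specializes it to Gabber's groups (Lemma 5.4, yielding the representatives $2c^p\beta$, $c^2\beta$, $c^2\beta + c^3$), and then uses openness of $g(k_v^2)$, non-surjectivity of $g$ over local fields, and weak approximation to find a $\beta$ that kills surjectivity of $U_\beta(k_v)\to V(k_v)$ at any prescribed finite set of places. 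Without some substitute for this computation, the assertion that the defect factor can be made large is unsupported: the mere facts that ${\rm{H}}^1(k,V)$ is infinite and that $W$ is wound do not by themselves produce the required failure of local surjectivity simultaneously at many places.

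Finally, note that the paper also has to justify that twists of $U$ by classes in ${\rm{H}}^1(k,V)$ are genuinely \emph{inner} forms of $U$; this uses ${\rm{H}}^2(k, Z_U) = 0$ for commutative unipotent $Z_U$ to show ${\rm{H}}^1(k,U)\to{\rm{H}}^1(k,U/Z_U)$ is surjective. Your assumption ``$Z_U = Z$ for generic pairing'' does not by itself give this; one also needs this surjectivity, and for the specific groups you'd also want to verify the center computation directly.
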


\begin{remark}
As in Remark \ref{fibersoftheta}, Theorem \ref{innerinvariancefailsforU} has implications for the fibers of the map
\[
\theta_U: {\rm{H}}^1(k, U) \rightarrow \coprod_v {\rm{H}}^1(k_v, G).
\]
Namely, as discussed in that remark, for $x \in {\rm{H}}^1(k, U)$, the fiber $\theta_U^{-1}(\theta(x))$ is in bijection with the Tate-Shafarevich set $\Sha(U_x)$ of $U$ twisted by $x$. But we claim that twists of $U$ by elements of ${\rm{H}}^1(k, U)$ are the same as inner twists of $U$. {\em The Tate-Shafarevich aspect of Theorem \ref{innerinvariancefailsforU} therefore exactly says that the map $\theta_U$ has arbitrarily large fibers.}

To prove the claim, we first note that any twist of $U$ by an element of ${\rm{H}}^1(k, U)$ is an inner form of $U$ because the map ${\rm{H}}^1(k, U) \rightarrow {\rm{H}}^1(k, {\rm{Aut}}_{U/k})$ factors through ${\rm{H}}^1(k, U) \rightarrow {\rm{H}}^1(k, U/Z_U)$. On the other hand, we claim that the map ${\rm{H}}^1(k, U) \rightarrow {\rm{H}}^1(k, U/Z_U)$ is surjective. Since $Z_U \subset U$ is central, in order to prove this it suffices to show that ${\rm{H}}^2(k, Z_U) = 0$. In fact, this ${\rm{H}}^2$-vanishing holds for any commutative unipotent $k$-group, as one may show by reducing to the smooth connected case \cite[Prop.\,2.5.4(i)]{rostateduality}, $\alpha_p$, and finite \'etale commutative groups of $p$-power order (for which the fppf cohomology agrees with the \'etale by \cite[Thm.\,11.7]{briii}, hence the desired vanishing is \cite[Ch.\,II, \S2.2, Prop.\,3]{serre}).
\end{remark}

Despite the pathological behavior described in Theorem \ref{innerinvariancefailsforU}, Tamagawa numbers of connected linear algebraic groups over global function fields do exhibit some reasonable behaviors. First, although Tamagawa numbers in general vary over the collection of inner forms of a given $G$, they only vary by powers of $p$:

\begin{theorem}
\label{tautwistpowerofpthm}
If $G$ is a connected linear algebraic group over a global function field of characteristic $p$, and $G'$ is an inner form of $G$, then $\tau(G)/\tau(G') \in p^\Z$.
\end{theorem}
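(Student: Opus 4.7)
The plan is to reduce to the pseudo-reductive case, where Theorem \ref{innerinvariance} applies, via the canonical short exact sequence
\[
1 \longrightarrow U \longrightarrow G \longrightarrow \overline{G} \longrightarrow 1,
\]
where $U := \mathscr{R}_{u, k}(G)$ is the $k$-unipotent radical and $\overline{G} := G/U$ is pseudo-reductive. Since $U$ is stable under all inner automorphisms of $G$, an inner form $G'$ of $G$ arising from a class $\xi \in {\rm{H}}^1(k, G/Z_G)$ satisfies $\mathscr{R}_{u, k}(G') = U_\xi$ (the twist of $U$ by $\xi$ acting through conjugation), and $G'/U_\xi \simeq \overline{G}_{\overline{\xi}}$, where $\overline{\xi}$ is the image of $\xi$ under the natural map $G/Z_G \to \overline{G}/Z_{\overline{G}}$ induced by $G \to \overline{G}$ (well-defined because $Z_G$ maps into $Z_{\overline{G}}$). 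Thus $\overline{G}'$ is an inner form of $\overline{G}$, and Theorem \ref{innerinvariance} yields $\tau(\overline{G}) = \tau(\overline{G}')$.

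Next, I would invoke the standard multiplicativity of Tamagawa numbers in an exact sequence $1 \to K \to G \to H \to 1$ of smooth connected affine $k$-groups, which takes the form $\tau(G) = \tau(K)\tau(H) \cdot q(K, G, H)$ for a positive rational correction factor $q(K, G, H)$ whose numerator and denominator are cardinalities of finite subquotients of local and global ${\rm{H}}^i(\cdot, K)$'s (measuring the failure of surjectivity of $G(\A) \to H(\A)$, of $G(k) \to H(k)$, and of the local-global comparison on ${\rm{H}}^1(\cdot, K)$). When $K$ is unipotent over a global function field of characteristic $p$, all such cohomology groups are $p$-primary (by devissage through successive smooth commutative quotients, reducing ultimately to forms of $\Ga$, whose cohomology is $p$-primary by an additive Hilbert 90 argument globally and by coherent cohomology vanishing locally). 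Hence $q(K, G, H) \in p^{\Z}$. Applying this to the sequences $1 \to U \to G \to \overline{G} \to 1$ and $1 \to U_\xi \to G' \to \overline{G}' \to 1$ and using $\tau(\overline{G}) = \tau(\overline{G}')$ reduces the problem to showing $\tau(U)/\tau(U_\xi) \in p^{\Z}$.

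For this it suffices to prove the stronger statement that $\tau(V) \in p^{\Z}$ for every smooth connected unipotent $k$-group $V$, by induction on $\dim V$. In the commutative base case, Theorem \ref{tamagawaformula} gives $\tau(V) = \#\Ext^1(V, \Gm)/\#\Sha(V)$; both orders are $p$-primary, since $V$ is annihilated by some power of $p$ (hence so are its cohomology and $\Ext^1(V, \Gm)$). For the inductive step, apply the exact-sequence formula to $1 \to [V, V] \to V \to V^{\ab} \to 1$: both $\tau([V, V])$ and $\tau(V^{\ab})$ lie in $p^{\Z}$ by induction, and the correction factor is in $p^\Z$ by the unipotent analysis above. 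The principal obstacle is pinning down the multiplicativity formula precisely enough in this generality to verify the $p$-primary nature of its correction factor: one must track volumes through the adelic long exact sequence and identify each failure of surjectivity of adelic points or of cohomology maps with a subquotient of unipotent-group cohomology, all of which is $p$-primary in characteristic $p$.
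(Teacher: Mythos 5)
Your approach is genuinely different from the paper's, but it has a real gap at the point where you apply the multiplicativity formula to the sequence $1 \to U \to G \to \overline{G} \to 1$ with $U := \mathscr{R}_{u,k}(G)$. The version of Oesterl\'e's formula used in the paper (Lemma \ref{tamsinsequences}) requires $\pi(G(\A))$ to be \emph{normal} in $\overline{G}(\A)$, and since $U$ need not be central in $G$, this hypothesis is not automatic. Moreover, even granting the formula, your claim that the correction factor is a power of $p$ relies on $\overline{G}(\A)/\pi(G(\A))\overline{G}(k)$ and $\ker(\Sha^1(j))$ being genuine \emph{subquotient groups} of the $p$-primary groups ${\rm{H}}^1(\A, U)$ and $\Sha(U)$; when $U$ is non-central (and non-commutative), these are merely pointed sets, and a subset of a finite $p$-group need not have $p$-power cardinality. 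The paper's Lemma \ref{centralsubgroup} is precisely the device that fixes both problems: it extracts a nontrivial smooth connected \emph{central} unipotent $k$-subgroup (after first peeling off normal split unipotent subgroups), so that the connecting map $\overline{G}(\A) \to {\rm{H}}^1(\A, U)$ is a group homomorphism (giving normality), $\overline{G}(\A)/\pi(G(\A))$ is a subgroup of the $p$-primary abelian group ${\rm{H}}^1(\A, U)$, and $\ker(\Sha(j))$ is a subgroup of $\Sha(U)$. The paper then inducts on $\dim G$, applying Lemma \ref{tamsinsequences} to the sequence with this central kernel and its inner twist. Your inner induction proving $\tau(V) \in p^\Z$ for all unipotent $V$ via $1 \to [V,V] \to V \to V^{\ab} \to 1$ suffers from the same issue, since $[V,V]$ need not be central in $V$; to carry it out one would again have to pass to a central subgroup (e.g., the last nontrivial term of the descending central series), which is in effect what Lemma \ref{centralsubgroup} does. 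In short, the decomposition you propose is natural but requires exactly the extra reduction to a central kernel that the paper performs; without it, the $p$-power claim for the correction terms is unjustified.
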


Second, Theorem \ref{innerinvariancefailsforU} naturally leads one to ask whether the unboundedness goes in the reverse direction. That is, clearly $\# \Sha(U')$ cannot be made arbitrarily small (it is bounded below by $1$), but can $\tau(U')$ be made arbitrarily large? The answer is no, as the following result shows.

\begin{theorem}
\label{tauboundedinnerthm}
Let $G$ be a connected linear algebraic group over a global function field. Then there is a constant $M$ $($depending on $G$$)$ such that $\tau(G') < M$ for all inner forms $G'$ of $G$.
\end{theorem}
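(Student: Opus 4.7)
The strategy is to reduce to the wound unipotent case via dévissage and then exploit the asymmetry between the two pathologies in Theorem~\ref{innerinvariancefailsforU}. First I would use the short exact sequence
\[
1 \longrightarrow U \longrightarrow G \longrightarrow H \longrightarrow 1,
\]
where $U := \mathscr{R}_{u,k}(G)$ is the $k$-unipotent radical and $H := G/U$ is pseudo-reductive. Since inner automorphisms preserve the $k$-unipotent radical, every inner form $G'$ of $G$ comes with a compatible decomposition $1 \to U' \to G' \to H' \to 1$, with $U'$ and $H'$ inner forms of $U$ and $H$ respectively. Theorem~\ref{innerinvariance} gives $\tau(H') = \tau(H)$, a fixed constant. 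The multiplicativity of Tamagawa measure in a short exact sequence \cite[Ch.\,II]{oesterle} then expresses $\tau(G')$ as $\tau(U')\tau(H')$ times a correction factor involving finite cohomology groups of $U'$ and $H'$; the $H'$-terms are controlled by pseudo-reductive invariance, so everything reduces to bounding $\tau(U')$ uniformly over inner forms of the fixed unipotent group $U$.

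To attack the unipotent case I would filter $U$ by its upper central series $1 = Z_0 \subset Z_1 \subset \cdots \subset Z_n = U$, whose graded pieces $V_i := Z_i/Z_{i-1}$ are commutative unipotent $k$-groups. The filtration is preserved by inner twisting, and since inner forms of commutative groups are trivial the graded pieces of $U'$ are canonically isomorphic to the $V_i$. For each commutative $V_i$, Theorem~\ref{tamagawaformula} provides $\tau(V_i) = \# \Ext^1(V_i, \Gm)/\# \Sha(V_i)$, a fixed finite quantity. Induction on the nilpotency class, repeatedly applying multiplicativity in the central extensions $1 \to V_i \to U'/Z_{i-1} \to U'/Z_i \to 1$, reduces the problem to controlling the correction factors for each step of the tower.

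The main obstacle is precisely this control over corrections, since they involve orders of $\Sha$ of the intermediate quotients $U'/Z_j$, which Theorem~\ref{innerinvariancefailsforU} shows may be arbitrarily large as $U'$ varies over inner twists. The crucial observation saving us is that the correction factor for a central extension enters the multiplicativity formula in a way that couples large $\Sha$ of the total space with small $\tau$, not large: this is already the content of Theorem~\ref{innerinvariancefailsforU}, and it means the corrections work in our favor for an \emph{upper} bound on $\tau$. Combining this with Theorem~\ref{tautwistpowerofpthm}, which reduces the claim to bounding a single integer exponent of $p$ above, then yields the desired constant $M$. The delicate point to verify is the sign of the correction at each stage of the central series induction, so that the sought-after inequality genuinely propagates upward rather than accumulating in the wrong direction.
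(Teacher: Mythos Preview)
Your d\'evissage has a genuine gap at the very first step. When you twist $G$ by a cocycle valued in $G/Z_G$, the unipotent radical $U = \mathscr{R}_{u,k}(G)$ is twisted by the conjugation action of $G/Z_G$ on $U$; this action does \emph{not} factor through $U/Z_U$, so the resulting $U'$ is a $k$-form of $U$ but typically not an inner form. Your second paragraph then collapses: the successive quotients $V_i = Z_i/Z_{i-1}$ of the upper central series are commutative, but the relevant twisting is by $G/Z_G$, whose action on each $V_i$ is generally nontrivial (only the $U$-action is trivial). Hence the graded pieces $V_i'$ of $U'$ are honest nontrivial forms of the $V_i$, and you have no a priori control over $\tau(V_i')$ or $\#\Sha(V_i')$. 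The appeal to Theorem~\ref{tautwistpowerofpthm} at the end does not rescue this: knowing that $\tau(G')/\tau(G) \in p^{\Z}$ gives no upper bound on the exponent.

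The paper's proof repairs exactly this defect by a different induction. Rather than stripping off the whole radical at once, it inducts on $\dim G$ and (after disposing of split normal unipotent subgroups) invokes Lemma~\ref{centralsubgroup} to produce a nontrivial smooth connected unipotent subgroup $U \subset G$ that is central \emph{in $G$}, not merely in the radical. Centrality in $G$ forces $U$ to be literally unchanged under inner twisting of $G$, so $\tau(U)$ and $\#\Sha(U)$ are fixed constants. Oesterl\'e's formula for the sequence $1 \to U \to G_\alpha \to H_\alpha \to 1$ then gives $\tau(G_\alpha) \le \tau(U)\cdot \#\Sha(U)\cdot \tau(H_\alpha)$, and since $H_\alpha$ is an inner form of $H = G/U$ (via the well-defined map $G/Z_G \to H/Z_H$), induction on dimension finishes. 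The key structural input you are missing is precisely Lemma~\ref{centralsubgroup}.
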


Next we come to Theorem \ref{pseudoredcomplexexact}, which describes the fibers of the map $\theta_G$. This result, too, fails in general for non-commutative unipotent groups:

\begin{theorem}
\label{complexnotexact}
For every prime $p$ there exists a global function field $k$ of characteristic $p$ and a wound non-commutative $2$-dimensional unipotent $k$-group $U$ such that the complex of pointed sets
\[
{\rm{H}}^1(k, U) \longrightarrow {\rm{H}}^1(\A, U) \longrightarrow \Ext^1(U, \Gm)^*
\]
fails to be exact.
\end{theorem}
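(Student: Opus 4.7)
The plan is to take $U$ to be (a variant of) the wound non-commutative $2$-dimensional unipotent group used to witness Theorem~\ref{tamformulafails}, constructed for each prime $p$ as a non-split non-commutative extension
\[
1 \longrightarrow V \longrightarrow U \longrightarrow W \longrightarrow 1
\]
of commutative wound $1$-dimensional $k$-forms of $\Ga$ over a suitable global function field $k$ of characteristic $p$, and then to exhibit an adelic class $\alpha \in {\rm{H}}^1(\A, U)$ that pairs trivially with every central $\Gm$-extension of $U$ but does not come from ${\rm{H}}^1(k, U)$. The witness is produced via an obstruction argument using the non-abelian boundary $\delta \colon {\rm{H}}^1(k, W) \to {\rm{H}}^2(k, V)$: I choose the extension data defining $U$ so that there exists $w \in {\rm{H}}^1(k, W)$ with $\delta(w) \in \Sha^2(k, V) \setminus \{0\}$, i.e., $w$ lifts locally everywhere to $U$ but not globally. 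Local lifts $\alpha_v$ of $w_v$ then assemble into the desired adelic class $\alpha$, which lies outside $\im({\rm{H}}^1(k, U) \to {\rm{H}}^1(\A, U))$.

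To produce such a $w$, I would first verify that $\Sha^2(k, V) \neq 0$ for a suitable wound commutative $1$-dimensional $V$ over a suitable global function field $k$ of characteristic $p$, using explicit Artin--Schreier-type models as in \S\ref{gabbergps}. Then I would choose the class in $\Ext^1(W, V)$ defining $U$ so as to realize a chosen nonzero element of $\Sha^2(k, V)$ as $\delta(w)$ for some $w \in {\rm{H}}^1(k, W)$; this reduces to controlling the cup-product pairing $\Ext^1(W, V) \times {\rm{H}}^1(k, W) \to {\rm{H}}^2(k, V)$, which should be arrangeable using the rich cohomology of wound unipotent groups over global function fields. I expect to be able to reuse the explicit families of $U$'s already constructed for the proof of Theorem~\ref{tamformulafails}.

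The remaining step, and the main technical obstacle, is to verify that $\alpha$ pairs trivially with $\Ext^1(U, \Gm)$. For this I would analyze central $\Gm$-extensions of $U$ via the Hochschild--Serre-type $5$-term exact sequence associated to $1 \to V \to U \to W \to 1$, realizing $\Ext^1(U, \Gm)$ as built from a subgroup of $\Ext^1(V, \Gm)^W$, a quotient of $\Ext^1(W, \Gm)$, and a commutator-pairing contribution. For a given $E \in \Ext^1(U, \Gm)$, the sum of local Brauer invariants $\sum_v \inv_v(\alpha_v \cup E)$ then splits into contributions from the $V$- and $W$-parts of $E$: the $W$-contribution vanishes by Theorem~\ref{pseudoredcomplexexact} applied to $W$ since $w$ is global, while the $V$-contribution depends on the choice of local lifts and can be forced to vanish by adjusting each $\alpha_v$ by an appropriate element of ${\rm{H}}^1(k_v, V)$ (a full torsor's worth of choices at each place).

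The main obstacle is ensuring that the two steps are compatible: the local normalizations of $\alpha_v$ must not disrupt the non-globality of $\alpha$. The argument is clean here because adjusting $\alpha_v$ by any element of ${\rm{H}}^1(k_v, V)$ leaves the image $w_v \in {\rm{H}}^1(k_v, W)$ unchanged, hence leaves $\delta(w)$ unchanged, so the global obstruction survives any local normalization. Everything then hinges on choosing the extension class defining $U$ generically enough that (i) $\delta$ has image in $\Sha^2(k, V) \setminus \{0\}$ and (ii) the $V$-contribution to the pairing is trivializable locally, both of which should be arrangeable by building on the concrete construction already used for Theorem~\ref{tamformulafails}.
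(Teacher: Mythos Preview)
Your obstruction argument has a fatal gap: over a global function field $k$, one has ${\rm{H}}^2(k, V) = 0$ for \emph{every} smooth connected commutative unipotent $k$-group $V$ (see \cite[Prop.\,2.5.4(i)]{rostateduality}, invoked explicitly in the paper). In particular $\Sha^2(k, V) = 0$, so the boundary $\delta \colon {\rm{H}}^1(k, W) \to {\rm{H}}^2(k, V)$ is identically zero and every global class in the quotient lifts globally to $U$. There is no choice of extension data that can produce a $w$ with $\delta(w) \neq 0$; the entire strategy collapses at the first step.

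The paper's argument runs in the opposite direction along the extension $1 \to W \to U \to V \to 1$ (your labeling of kernel and quotient is swapped relative to the paper's). Rather than pulling back a global class from the quotient, one pushes forward an adelic class from the kernel: take any $\alpha \in {\rm{H}}^1(\A, W)$ not in the image of ${\rm{H}}^1(k, W)$ and push it to ${\rm{H}}^1(\A, U)$. The pairing with $\Ext^1(U, \Gm)$ vanishes automatically, because $W = \mathscr{D}U$ and hence the restriction $\Ext^1(U, \Gm) \to \Ext^1(W, \Gm)$ is zero by Lemma~\ref{Ext^1=Ext^1ab}; no Hochschild--Serre analysis or local adjustment of the $\alpha_v$ is needed. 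To see that the image of $\alpha$ in ${\rm{H}}^1(\A, U)$ does not come from ${\rm{H}}^1(k, U)$, one uses $\Sha(V) = 0$ (Lemma~\ref{Sha=0}) together with the injectivity of ${\rm{H}}^1(\A, W) \to {\rm{H}}^1(\A, U)$, which holds because $U(\A) \to V(\A)$ is surjective (the extension being scheme-theoretically split over $V$). The problem thus reduces to showing $\Che^1(W) := \coker\bigl({\rm{H}}^1(k, W) \to {\rm{H}}^1(\A, W)\bigr) \neq 0$; by global Tate duality and the vanishing of ${\rm{H}}^2(k, W)$, this group is dual to ${\rm{H}}^1(k, \widehat{W}) \simeq \Ext^1(W, \Gm)$, which is nonzero by Lemma~\ref{Extnonzero}.
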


For a more precise description of the groups described in Theorem \ref{complexnotexact}, see Theorem \ref{complexnotexactU}.

Let us now summarize the contents of this paper. In \S \ref{gabbergps} we describe some wound non-commutative 2-dimensional unipotent groups constructed by Gabber which will serve as our source of counterexamples for the various pathologies described in this introduction. In \S \ref{sectionpathologies1}, we prove Theorems \ref{tamformulafails} and \ref{complexnotexact} by using Gabber's groups to construct explicit counterexamples to the unipotent analogues of Theorems \ref{tamagawaformula} and \ref{pseudoredcomplexexact}. In \S \ref{sectionpathologies2}, we prove Theorem \ref{innerinvariancefailsforU} by showing that all of the groups constructed by Gabber over global function fields provide examples of the pathological behavior described in that result. Finally, in \S \ref{innertwistpos} we prove Theorems \ref{tautwistpowerofpthm} and \ref{tauboundedinnerthm}. Each section of this paper is independent of the others, with the exception that \S\S \ref{sectionpathologies1} and \ref{sectionpathologies2} depend upon the constructions in \S \ref{gabbergps}.

\subsection{Notation and conventions}

Throughout this paper, $k$ denotes a field and, when it appears, $p$ denotes a prime number equal to the characteristic of $k$.

A {\em linear algebraic group} over $k$ is a smooth affine $k$-group scheme. When $k$ is a global field, $k_v$ denotes the completion of $k$ at a place $v$, $\calO_v$ the ring of integers of $k_v$ when $v$ is non-archimedean, and $\A_k$ (or just $\A$ when there can be no confusion) denotes the ring of adeles of $k$.

For any affine $k$-group scheme $G$ of finite type, we define $\widehat{G}$ to be the functor on $k$-algebras given by
$$\widehat{G}(A) = \Hom_{A-{\rm{gp}}}(G_A, \Gm)$$
(so $\widehat{G}(k)$ is the group of $k$-homomorphisms $G \to \Gm$).

We must also make some remarks about the behavior of cohomology in exact sequences. Given an affine group scheme $G$ of finite type over a field $k$, one may define ${\rm{H}}^1(k, G)$ as the set of fppf $G$-torsors over Spec$(k)$ up to isomorphism; this is a pointed set, and if $G$ is commutative then it is even an abelian group. When $G$ is commutative, one may also define the higher cohomology groups. Note that the affineness of $G$ implies that the torsor sheaves classified by this ${\rm{H}}^1$-set are all representable due to the effectivity of fppf descent for affine schemes.

If $G$ is smooth over $k$, then the \'etale and fppf $G$-torsors agree, so we may define ${\rm{H}}^1(k, G)$ to be the set of isomorphism classes of \'etale or fppf $G$-torsors over Spec$(k)$. The \'etale and fppf cohomology groups ${\rm{H}}^i(k, G)$ agree for all $i$ when $G$ is smooth and commutative \cite[Thm.\,11.7]{briii}. When $G$ is smooth, therefore, all of the defined cohomology groups may be defined in terms of Galois cohomology by using the language of cocycles and coboundaries. This is the language used in \cite{serre}.

On the other hand, for some purposes it is necessary to work with cohomology over general base schemes, or even fppf cohomology over fields, in which case the language of Galois cocycles is insufficient. One may sometimes replace these with \v{C}ech cohomology, but it is also useful to develop the entire theory in a more intrinsic manner, using the language of torsors.

Given an exact sequence
\begin{equation}
\label{asampleseq}
1 \longrightarrow G' \longrightarrow G \longrightarrow G'' \longrightarrow 1
\end{equation}
of smooth affine $k$-group schemes, one may compute the associated cohomology exact sequence using Galois cohomology. In this language, much of the familiar formalism of long exact sequences which comes out of (\ref{asampleseq}) when the groups are commutative remains true in the non-commutative setting. This is discussed in \cite[Chap.\,I, \S 5]{serre}. 

Much of the discussion in \cite{serre} is done in the language of torsors as well as Galois cocycles, though not all of it. We will at various points throughout this work require results for cohomology sets of the form ${\rm{H}}^1(\A_k, G)$, where $G$ is a smooth connected affine group scheme over a global field $k$. Strictly speaking, the results in \cite{serre} do not apply to these sets. There are, however, two ways around this.
The first is to simply invoke \cite[Prop.\,1.5]{rospred} to reduce assertions for adelic cohomology to the case of fields, where one may use Galois cohomology and apply the results in \cite{serre} directly. The second approach is to state and prove all of the results of \cite[Chap.\,I, \S 5]{serre} in the more general context of torsors over an arbitrary base rather than merely Galois (i.e., \'etale) cohomology over fields. This is essentially done in \cite[Appendix B]{conrad}. 

Strictly speaking, \cite[Appendix B]{conrad} only treats the case of fields, but all of the techniques and arguments used there for deriving properties of the long exact sequence associated to the short exact sequence (\ref{asampleseq}) work over a general base ring for affine groups, since affineness ensures the effectivity of all descent datum, which is necessary if one wishes to ensure that all fppf sheaf torsors are in fact representable by schemes; see \cite[\S III.4.1, Prop.\,1.9]{demazure}. (If one does not care about such representability, and is satisfied with working just with sheaf torsors, then even this assumption is unnecessary.)

Throughout this work, we will refer to \cite{serre} and invoke \cite[Prop.\,1.5]{rospred}, but we wanted to make the reader aware of the more general results essentially proved in \cite[Appendix B]{conrad} which allow one to work directly over the adele ring rather than invoking this ``trick''.

\subsection{Acknowledgements}

It is a pleasure to thank my advisor, Brian Conrad, who worked tirelessly to provide me with many helpful suggestions and improvements on both the presentation and the arguments throughout this paper.

\section{Gabber's unipotent groups $U_a$}
\label{gabbergps}

In this section we describe some groups constructed by Gabber (for the purpose of giving examples of wound non-commutative unipotent groups, which is actually somewhat tricky). These are the groups that we shall use in this paper to give counterexamples to the results in \S \ref{posresults} in the (wound non-commutative) unipotent setting. These yield examples of 2-dimensional wound non-commutative unipotent groups over every imperfect field.

Before giving the constructions, we briefly recall the notion of woundness for unipotent groups. A {\em wound}  (or {\em $k$-wound}) unipotent group over a field $k$ is a smooth connected unipotent $k$-group scheme $U$ such that any map  of $k$-schemes $\A^1_k \rightarrow U$ from the affine line to $U$ is the constant map to some $u \in U(k)$. This is equivalent to saying that $U$ does not contain a copy of $\Ga$ \cite[Def.\,B.2.1, Cor.\,B.2.6]{cgp}. Woundness is insensitive to separable field extension. That is, if $U$ is a smooth connected unipotent $k$-group, and $K/k$ is a (not necessarily algebraic) separable field extension, then $U$ is $k$-wound if and only if $U_K$ is $K$-wound \cite[Prop.\,B.3.2]{cgp}.
Clearly, smooth connected $k$-subgroups of wound unipotent $k$-groups are still wound. So are extensions of wound groups by other wound groups, as may be checked by using the formulation in terms of containing a copy of $\Ga$. Quotients of wound groups, however, need not be wound (even quotients by smooth connected subgroups). Over a perfect field, all smooth connected unipotent groups are split \cite[Thm.\,15.4(iii)]{borelalggroups}, hence no nontrivial wound unipotent groups exist over such fields. Over imperfect fields, however, there are many. We will give some examples below (see (\ref{defofVW})).

Now we recall Gabber's construction of wound non-commutative unipotent groups over any imperfect field \cite[Ex.\,2.10]{conrad3}. Let $k$ be an imperfect field of characteristic $p$, and let $a \in k - k^p$. Consider the smooth connected wound $1$-dimensional unipotent groups $V_a, W_a$ defined by
\begin{equation}\label{defofVW}
V_a := \{ X = X^{p^2} + aY^{p^2} \} \subset \Ga^2,\,\,\,
W_a := \{X = -(X^p + aY^p)\} \subset \Ga^2.
\end{equation}
One checks that these groups become isomorphic to $\Ga$ over $k(a^{1/p^2})$, $k(a^{1/p})$, respectively.
We claim that they are wound. Indeed, it suffices to show that they are not $k$-isomorphic to $\Ga$. We will in fact show that they are not isomorphic to $\A^1_k$ as $k$-schemes. In order to do this, it suffices to show that for the unique regular compactifications $\overline{V}_a, \overline{W}_a$ of the smooth curves $V_a, W_a$, the complements $\overline{V}_a - V_a, \overline{W}_a - W_a$ do not consist of a single rational point. (In fact, this method of checking woundness is completely general; for a smooth connected $1$-dimensional unipotent group $G$ over any field $k$, the regular compactification $\overline{G}$ of $G$ always consists of a single point that becomes rational over some finite purely inseparable extension of $k$, and $G$ is wound if and only if this point is not rational over $k$ \cite[Prop.\,5.3]{rospic}.)
But one easily checks that $\overline{V}_a, \overline{W}_a$ are given by the projectivizations of the equations in (\ref{defofVW}) defining $V_a, W_a$:
\[
\overline{V}_a := \{ XZ^{p^2-1} = X^{p^2} + aY^{p^2} \} \subset \P^2_k,
\]
\[
\overline{W}_a := \{XZ^{p-1} = -(X^p + aY^p)\} \subset \P^2_k.
\]
The points at infinity on these curves are not $k$-rational, since $a \notin k^p$.

We may construct an extension $U_a$ of $V_a$ by $W_a$ by using the following nonzero alternating bi-additive $2$-cocycle $h: V_a \times V_a \rightarrow W_a$:
\begin{equation}\label{hadd}
h\left( (x, y), (x', y')\right) := (xx'^p-x^px', xy'^p-x'y^p).
\end{equation}
(For generalities on the relationship between extensions of algebraic groups and $2$-cocycles, see \cite[Chap.\,II, \S 3.2]{demazure}. We will not require any of these general results about this relationship.)
We let 
\begin{equation}\label{uadef}
U_a := W_a \times V_a
\end{equation}
as $k$-schemes, with group law
\begin{equation}\label{ugplaw}
(w, v)\cdot(w', v') := (w + w' + h(v, v'), v + v').
\end{equation}
This defines a group law with identity $(0, 0)$ and inverse $(w, v)^{-1} = (-w, -v)$, and projection onto $V_a$ is a surjective group homomorphism with kernel identified with $W_a$ via the map $w \mapsto (w, 0)$. Further, if $p > 2$ then $U_a$ is non-commutative. It is wound unipotent because $V_a, W_a$ are. Note that $U_a(k) \rightarrow V_a(k)$ is surjective, since $U_a = W_a \times V_a$ as $V_a$-schemes.

When $p = 2$, the construction is somewhat more complicated. First consider general $p$ (not necessarily equal to $2$), and consider the group
\[
W^+_a := \{ X = X^p + aY^p\} \subset \Ga^2.
\]
Consider the $k$-morphism $b: V_a \rightarrow W^+_a$ defined by $b(x, y) := (x^{p+1}, xy^p)$ and the symmetric bi-additive $2$-coboundary $h^+ := -db: V_a \times V_a \rightarrow W^+_a$ defined by
\begin{equation}\label{hplus}
h^+(v, v') := b(v + v') - b(v) - b(v') = (xx'^p+x^px', xy'^p+x'y^p),
\end{equation}
where $v = (x, y)$, $v' = (x', y')$. Now choose $\zeta \in \F_{p^2} - \F_p$, and consider the bi-additive $2$-cocycle $h_{\zeta}: V_a \times V_a \rightarrow W^+_a$ defined by 
\begin{equation}\label{hzeta}
h_{\zeta}(v, v') := h^+(v, \zeta v') = h^+(\zeta^pv, v').
\end{equation}
 This is not symmetric, hence defines a non-commutative group $U^{\zeta}_a$ as follows. Let 
 \begin{equation}\label{uazeta}
 U^{\zeta}_a = W^+_a \times V_a
 \end{equation}
 as $k$-schemes, with group law given by 
\begin{equation}\label{uzetagp}
(w, v) \cdot (w', v') := (w + w' + h_{\zeta}(v, v'), v + v').
\end{equation}
The identity of $U^{\zeta}_a$ is $(0, 0)$, and inversion is given by $(w, v)^{-1} = (-w - h_{\zeta}(v, -v), -v)$. Further, projection onto $V_a$ is a surjective group homomorphism with kernel identified with $W^+_a$ via the map $w \mapsto (w, 0)$. Unfortunately, this only defines $U^{\zeta}_a$ over $\F_{p^2}(a)$. 

\begin{remark}
\label{F_4notink}
For the counterexamples that we will construct in \S \ref{sectionpathologies1}, the groups $U_a$ constructed above whenever $p > 2$ or $p = 2$ and $\F_4 \subset k$ will suffice, since we will only deal with global function fields containing $\F_{p^2}$. In \S \ref{sectionpathologies2}, we will construct examples of unipotent groups with strange behavior over {\em every} global function field, so we will require the groups constructed in a more complicated manner below when $p = 2$ but $\F_4 \not \subset k$. The reader who wishes to avoid this complication may simply restrict attention to fields of characteristic $p > 2$ and fields of characteristic $2$ containing $\F_4$. In this case, Proposition \ref{connectingmap} simplifies as explained in Remark \ref{oddassumptions}, and one can ignore the more complicated case when $\F_4 \not \subset k$ in the proof of Lemma \ref{connectingmap_a}.
\end{remark}

In order to deal with fields $k$ of characteristic $2$ such that $\F_4 \not \subset k$, we now define a Galois descent datum on $U^{\zeta}_a$ in order to descend it to an extension of $V_a$ by $W^+_a$ over the rational function field $\F_2(a) \subset k$. 

Let $p = 2$, so $W^+_a = W_a$. Let $\sigma$ denote the nontrivial automorphism of $\F_4(a)$ over $\F_2(a)$. Then $\zeta$ is a primitive cube root of unity, hence its Galois conjugate over $\F_2$ is $\zeta^{-1} = \zeta + 1$. For the Galois conjugate group $U^{\zeta + 1}_a$, note that 
\[
h_{\zeta + 1} = h_{\zeta} + h^+ = h_{\zeta} - db,
\]
so we get an $\F_4(a)$-isomorphism $[\sigma]: U^{\zeta}_a \xrightarrow{\sim} U^{\zeta + 1}_a \simeq \sigma^*(U^{\zeta}_a)$ defined by $$(w, v) \mapsto (w + b(v), v).$$ One checks that $\sigma^*([\sigma]) \circ [\sigma]: U^{\zeta}_a \rightarrow U^{\zeta}_a$ is the identity map, so $[\sigma]$ defines a descent datum on $U^{\zeta}_a$, which therefore (because of effectivity of descent for affine schemes) descends to a non-commutative extension of $V_a$ by $W_a$ over $\F_2(a)$, which we again denote by $U_a$. The corresponding group over $k$ is just the base change of this one from $\F_2(a)$ to $k$. Note that $U_a(k) \rightarrow V_a(k)$ is surjective if $\F_4 \subset k$, since then $U_a = W_a \times V_a$ as $V_a$-schemes.

\section{Pathologies with unipotent groups: Tamagawa numbers and exact sequences}
\label{sectionpathologies1}

The purpose of this section is to prove Theorems \ref{tamformulafails} and \ref{complexnotexact}. In order to do this we will make use of Gabber's groups $U_a$ defined in \S \ref{gabbergps}. In particular, we take $k := \F_q(T)$, $q := p^{2n}$ for $n \geq 1$, and $a := T(T-1)$. Denote the groups $W_a, V_a, U_a$ simply by $W, V, U$, respectively. This notation will be in force throughout the rest of this section. We will show that the conclusions of Theorems \ref{tamagawaformula} and \ref{pseudoredcomplexexact} fail for $U$. In order to do this, we begin with some calculations.

\begin{lemma}
\label{Sha=0}
$\Sha(V) = 0$.
\end{lemma}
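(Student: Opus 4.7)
The idea is to use the fppf exact sequence of commutative $k$-group schemes
\begin{equation*}
0 \longrightarrow V \longrightarrow \Ga^2 \xrightarrow{F} \Ga \longrightarrow 0, \qquad F(X, Y) := X - X^{p^2} - aY^{p^2},
\end{equation*}
which is exact because $F$ is smooth (its partial derivative in $X$ is $1$) and surjective on geometric points. Taking the long exact cohomology sequence over $k$ and over $\A$, and using the vanishing of ${\rm H}^1(k, \Ga^2)$ together with ${\rm H}^1(\A, \Ga^2) = 0$ (via \cite[Prop.\,1.5]{rospred}), we obtain the identifications
\begin{equation*}
{\rm H}^1(k, V) = k/F(k^2), \qquad {\rm H}^1(\A, V) = \A/F(\A^2),
\end{equation*}
and therefore
\begin{equation*}
\Sha(V) \;=\; (F(\A^2) \cap k)/F(k^2).
\end{equation*}

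Consequently, the lemma reduces to the following local--global assertion: if $c \in k$ admits, at each place $v$, a decomposition $c = x_v - x_v^{p^2} - a y_v^{p^2}$ with $(x_v, y_v) \in k_v^2$, then such a decomposition can be found with $(x, y) \in k^2$. My plan for proving this is to exploit simultaneously strong approximation for $\Ga^2$ and the specific structure of the rational function field $k = \F_q(T)$ together with $a = T(T-1)$. First, strong approximation produces $(x_0, y_0) \in k^2$ that is $v$-adically close to $(x_v, y_v)$ at every place except $v = \infty$, so that the residue $r := c - F(x_0, y_0) \in k$ has arbitrarily high valuation at every finite place. Second, using that $k^{p^2} = \F_q(T^{p^2})$ and that $k$ is a free $k^{p^2}$-module of rank $p^2$ with basis $1, T, \ldots, T^{p^2-1}$, one can decompose $r$ along this basis and reduce modulo $F(k^2)$ to a controlled normal form. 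The simple zeros of $a$ at $T = 0$ and $T = 1$ supply valuation constraints mod $p^2$ at those places which restrict the possible $y$-contributions, while the $\wp_2$-image takes care of the remaining terms.

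The main obstacle is the uncontrolled behavior at $v = \infty$, since strong approximation leaves the $\infty$-adic error unbounded. The key insight I expect to use is that the residue $r$, being a global element of $k$ that becomes $v$-adically small at every finite $v$, is forced to have bounded pole order at $\infty$ (by the product formula / the fact that the only globally bounded functions are polynomials of bounded degree). Combined with the double pole of $a$ at $\infty$, which forces the leading terms of $a y_\infty^{p^2}$ to live in a very particular $p^2$-arithmetic progression of valuations, this rigidity pins down $r$ as a member of $F(k^2)$ and yields the desired global decomposition.
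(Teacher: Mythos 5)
Your setup is exactly the paper's: the exact sequence $0 \to V \to \Ga^2 \xrightarrow{f} \Ga \to 0$ identifies $\Sha(V)$ with $(f(\A^2) \cap k)/f(k^2)$, and the first strong approximation step reduces to the case where $\lambda$ is integral at every finite place, i.e.\ $\lambda \in \F_q[T]$. But two things go wrong after that.

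First, a small but conceptually important imprecision: you write that $r$ has ``arbitrarily high valuation at every finite place'' after strong approximation. A fixed nonzero $r \in k$ has valuation $0$ at all but finitely many places, and strong approximation only lets you control finitely many places at once. The correct conclusion (and the one the paper draws) is that $r$ may be taken to lie in $\calO_v$ for \emph{all} $v \neq \infty$, i.e.\ $r$ is a polynomial in $T$; there is no further ``smallness'' to exploit there.

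Second, and more seriously, the continuation you sketch does not close the argument. Decomposing $r$ and the prospective $x, y$ along the $k^{p^2}$-basis $1, T, \dots, T^{p^2-1}$ does reduce the identity $f(x,y) = r$ to a system where the coefficients of $T^i$ for $i \geq 1$ can be solved off freely, but the $T^0$-coefficient leaves you with an Artin--Schreier-type equation $u^{p^2} - u^{p^4} = (\text{stuff depending on the other data})$ in a new unknown $u \in k$. The solvability of that equation over $k$ (as opposed to over $k_\infty$) is exactly what needed proof, so this route replaces the original local-global problem by another one of the same difficulty. Your remarks about ``bounded pole order at $\infty$'' and ``$p^2$-arithmetic progressions of valuations'' gesture at the kind of valuation/differential analysis the paper uses to bound $W(k)$ and $V_a(k)$ (Lemmas~\ref{W(k)} and~\ref{V(k)finite}), but that is not the mechanism used for $\Sha(V) = 0$. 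What the paper does is a \emph{second} approximation, this time at $\infty$ and specific to $\F_q(T)$: any $z \in k_\infty$ has a polynomial principal part $\alpha \in \F_q[T]$ with $\ord_\infty(\alpha - z) > 0$. Applying this to the components $x_\infty, y_\infty$ of the local solution at $\infty$ produces $x, y \in \F_q[T]$ with $\lambda - f(x, y)$ both integral at all finite places and lying in $\mathfrak{m}_\infty$ (the latter because $\ord_\infty(T(T-1)(y_\infty - y)^{p^2}) \geq p^2 - 2 > 0$), forcing $\lambda - f(x, y) = 0$. That is the missing ingredient: your proposal has the right first half but no concrete bridge to the conclusion, and the bridge you gesture toward would require resolving an Artin--Schreier solvability problem that is not easier than the original.
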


\begin{proof}
Using the exact sequence
\[
0 \longrightarrow V \longrightarrow \Ga^2 \xlongrightarrow{f} \Ga \longrightarrow 0,
\]
where $f(x, y) := x - x^{p^2} - T(T-1)y^{p^2}$, we see that ${\rm{H}}^1(k, V) \simeq k/f(k^2)$, and similarly for $k_v$. Therefore,
\[
\Sha(V) \simeq \frac{\{ \lambda \in k \mid \lambda \in f(k_v^2) \mbox{ for all } v\}}{f(k^2)}.
\]
So suppose that $\lambda \in k$ lies in $f(k_v^2)$ for every place $v$ of $k$. We want to show that $\lambda \in f(k^2)$. For each $v$, write $\lambda = f(x_v, y_v)$ for some $x_v, y_v \in k_v$. By strong approximation, we may choose $x, y \in k$ such that $x - x_v, y - y_v \in \calO_v$ for all $v \neq \infty$ such that $\lambda \notin \calO_v$ and such that $x, y \in \calO_v$ for all other $v \neq \infty$. Then $\lambda - f(x, y) = f(x_v - x, y_v - y) \in \calO_v$ for all $v \neq \infty$, so by replacing $\lambda$ by $\lambda - f(x, y)$, we may assume that $\lambda \in \calO_v$ for all $v \neq \infty$. 

Renaming, we have $\lambda = f(x_{\infty}, y_{\infty})$ for some $x_{\infty}, y_{\infty} \in k_{\infty}$. I 
claim that for any $z \in k_{\infty}$, there exists $\alpha \in k$ such that $\alpha \in \calO_v$ for all $v 
\neq \infty$ and ${\rm{ord}}_{\infty}(\alpha - z) > 0$. Indeed, writing $z = \sum_{n \geq -N} c_nT^{-n}$ 
for some $c_n \in \F_q$, we may take $\alpha := \sum_{n \leq 0} c_nT^{-n}$. Applying this to 
$x_{\infty}, y_{\infty}$, we see that there exist $x, y \in k$ such that $x, y \in \calO_v$ for all $v \neq 
\infty$, while ${\rm{ord}}_{\infty}(x - x_{\infty})$, ${\rm{ord}}_{\infty}(y - y_{\infty}) > 0$. Then $\lambda - 
f(x, y) \in \calO_v$ for $v \neq \infty$, while $\lambda - f(x, y) = (x_{\infty} - x) - (x_{\infty} - x)^{p^2} - 
T(T-1)(y_{\infty} - y)^{p^2} \in \mathfrak{m}_{\infty}$, the maximal ideal of $\calO_{\infty}$. It follows that 
$\lambda - f(x, y) = 0$.
\end{proof}

\begin{lemma}
\label{W(k)}
If $p > 3$, then
\[
W(k) = \{ (\lambda, 0) \in k \times k \mid \lambda + \lambda^p = 0\}.
\]
If $p = 3$, then
\[
W(k) = \left\{ \left(\lambda + \frac{\mu}{T+1}, \frac{\mu}{T+1}\right) \in k \times k \middle| \lambda + \lambda^3 = \mu + \mu^3 = 0\right\}.
\]
In particular, 
\[
\#W(k) = \begin{cases}
9, & p = 3, \\
p, & p > 3.
\end{cases}
\]
\end{lemma}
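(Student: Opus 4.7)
The plan is to compute $W(k)$ directly via local valuation analysis at every place of $k$ and then identify the remaining finite family of solutions. We have $W(k) = \ker(\phi: k^2 \to k)$ where $\phi(x, y) = x + x^p + T(T-1) y^p$. Using $v(x + x^p) = p \cdot v(x)$ whenever $v(x) < 0$, together with $v_\infty(T(T-1)) = -2$, $v_T(T(T-1)) = v_{T-1}(T(T-1)) = 1$, and $v_\pi(T(T-1)) = 0$ for all other finite primes $\pi$, and using that $p \geq 3$ divides neither $2$ nor $1$, I would first conclude: $y$ vanishes at $\infty$; both $x$ and $y$ are regular at $T$ and $T-1$; and at every other finite prime $\pi$ one has $v_\pi(x) = v_\pi(y)$ whenever this common value is negative (and both are regular otherwise).

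Next, fix an irreducible $\pi \in \F_q[T] \setminus \{T, T-1\}$ of degree $d$ and suppose $v_\pi(y) = -n < 0$. Writing $x = f/\pi^n$ and $y = h/\pi^n$ in $k_\pi$ with $f, h$ units yields the equation $f \pi^{n(p-1)} + f^p + T(T-1) h^p = 0$. In characteristic $p$ we have $f^p \equiv f_0^p \pmod{\pi^p}$ and likewise for $h$. Expanding $T = \alpha_0 + \alpha_1 \pi + O(\pi^2)$ in $k_\pi$, with $\alpha_0 \in \F_{q^d}$ a root of $\pi$ and $\alpha_1 = 1/\pi'(\alpha_0) \neq 0$ (by separability of $\pi$), one gets $T(T-1) \equiv \alpha_0(\alpha_0 - 1) + \alpha_1(2\alpha_0 - 1)\pi \pmod{\pi^2}$. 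Since $n(p-1) \geq 2$, matching the $\pi^1$-coefficient of the equation to $0$ gives $\alpha_1(2\alpha_0 - 1) h_0^p = 0$, which forces $\alpha_0 = 1/2$, hence $\pi = T - 1/2$ (so $d = 1$). At this $\pi$ we have $T(T-1) = -1/4 + \pi^2$ exactly, and the $\pi^2$-coefficient of the equation is $h_0^p$, plus an additional $f_0$ precisely when $n(p-1) = 2$, i.e., when $(p, n) = (3, 1)$.

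For $p > 3$, we have $n(p-1) \geq p - 1 > 2$, so the $\pi^2$-coefficient is just $h_0^p$, forcing $h_0 = 0$ and contradicting $h_0 \neq 0$. Hence $y$ has no pole at any finite prime, and combined with its vanishing at $\infty$ this forces $y = 0$; then $x + x^p = 0$, and a similar valuation argument forces $x \in \F_q$, so $x$ lies in the $p$-element Artin--Schreier kernel of $\F_{p^2} \subseteq \F_q$ (the inclusion is automatic since $q = p^{2n}$). For $p = 3$: if $n \geq 2$ then again $h_0^3 = 0$, so $n = 1$ and $\pi = T + 1$ (since $1/2 = -1$ in $\F_3$). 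The $\pi^0$-coefficient yields $f_0^3 = h_0^3$, i.e., $f_0 = h_0$; the $\pi^2$-coefficient yields $f_0 = -h_0^3$; together $h_0 + h_0^3 = 0$ with $h_0 \neq 0$, so $h_0 \in \{\pm i\} \subset \F_9 \subseteq \F_q$. This gives $y = \mu/(T+1)$ with $\mu + \mu^3 = 0$, and regularity of $x$ at $\infty$ combined with the pole analysis gives $x = \lambda + \mu/(T+1)$ for some $\lambda \in \F_q$; substituting into the defining equation and using the characteristic-$3$ identity $(T+1)^2 = T^2 - T + 1 = 1 + T(T-1)$ reduces it to $\lambda + \lambda^3 = 0$, yielding $3 \times 3 = 9$ solutions. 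The main technical obstacle is the careful $\pi$-adic bookkeeping across the case split, and recognizing the sporadic $(p, n) = (3, 1)$ case in which $n(p-1) = 2$ allows the $f \pi^{n(p-1)}$ term to contribute at the critical $\pi^2$ order.
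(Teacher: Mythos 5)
Your proof is correct, but it follows a genuinely different route from the paper's. The paper's key step is to take differentials of the defining relation: $dx = -(2T-1)y^p\,dT$, and then play the general valuation bound $\ord_v(dx) \geq \ord_v(x) - 1$ against the explicit formula $\ord_v(dx) = p\,\ord_v(x) + \ord_v((2T-1)dT)$; for $p > 3$ this forces $\ord_v(x) \geq 0$ everywhere, while for $p = 3$ it permits exactly one pole, of order $1$, at $v = -1$ (the zero of $2T-1$). You replace the differential argument by clearing denominators in each completion to obtain $f\pi^{n(p-1)} + f^p + T(T-1)h^p = 0$ and matching $\pi$-adic coefficients: your $\pi^1$-coefficient analysis recovers the constraint $\alpha_1(2\alpha_0-1)h_0^p = 0$, which is precisely the information the paper reads off from the factor $(2T-1)dT$, and your $\pi^2$-coefficient analysis then kills the pole unless $(p,n)=(3,1)$, detecting the sporadic contribution of $f\pi^{n(p-1)}$ exactly when $n(p-1)=2$. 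Your version avoids differentials on function fields entirely, at the cost of more power-series bookkeeping and having to handle the degree-$d$ residue field and the Teichm\"uller expansion of $T$ with some care; the paper's is slicker and shorter. Both correctly single out $\pi = T+1$ as the exceptional place when $p=3$, and your final reduction via $(T+1)^2 = 1 + T(T-1)$ is an equally valid alternative to the paper's trick of reducing modulo $\mathfrak{m}_0$ and $\mathfrak{m}_1$.
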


\begin{remark}
If $p = 2$, then $W$ is a smooth affine plane conic, hence rational, so $W(k)$ is infinite.
\end{remark}

\begin{proof}
The last assertion follows from the first two and the fact that $x + x^p = 0$ has no repeated roots and all of its roots lie in $\F_{p^2} \subset \F_q$, since they satisfy $x^{p^2} = (x^p)^p = (-x)^p = -x^p = x$.

In order to prove that the points listed are all of the $k$-points of $W$, we first claim that if $(x, y) \in W(k)$ and $p > 3$, then ${\rm{ord}}_v(x) \geq 0$ for all places $v$ of $k$, hence $x \in \F_q$. Indeed, suppose to the contrary that some ${\rm{ord}}_v(x) < 0$ (so $x \neq 0$). Then ${\rm{ord}}_v(x + x^p) = {\rm{ord}}_v(x^p) = p\cdot {\rm{ord}}_v(x) < 0$, so $x + x^p \neq 0$. Using the equation
\begin{equation}
\label{deqn2}
x + x^p = -T(T-1)y^p
\end{equation}
(forcing $y \neq 0$), we see that $p\cdot {\rm{ord}}_v(x) = {\rm{ord}}_v(T(T-1)) + p\cdot {\rm{ord}}_v(y)$, so $p \mid {\rm{ord}}_v(T(T-1))$. Since $p > 2$, this implies that ${\rm{ord}}_v(T(T-1)) = 0$, and in particular, $v \neq 0, 1, \infty$. We deduce that ${\rm{ord}}_v(x) = {\rm{ord}}_v(y)$.

Now taking differentials of (\ref{deqn2}) for the field extension $k = \F_q(T)$ over $\F_q$ yields $dx = y^pd(T(T-1)) = (2T-1)y^pdT \neq 0$. In particular, ${\rm{ord}}_v(dx) = p\cdot {\rm{ord}}_v(x) + {\rm{ord}}_v((2T-1)dT)$. The last quantity on the right side of this equation is at most $1$, hence if $p > 3$, then (since ord$_v(x) \leq -1$)
\begin{equation}
\label{ineq1}
{\rm{ord}}_v(dx) = p\cdot {\rm{ord}}_v(x) + {\rm{ord}}_v((2T-1)dT) < 3\cdot {\rm{ord}}_v(x) + 1 \leq {\rm{ord}}_v(x) - 1
\end{equation}
and this is a contradiction, since we always have ${\rm{ord}}_v(dx) \geq {\rm{ord}}_v(x) - 1$. Thus, if $p > 3$, then $x \in \F_q$. If $y \neq 0$, then it would follow that $T(T-1) \in k^p$, a contradiction. Therefore, $y = 0$ and $x + x^p = 0$.

If $p = 3$, then the strict inequality between the outer terms in (\ref{ineq1}) still holds unless ${\rm{ord}}_v(x) = -1$ and ${\rm{ord}}_v((2T-1)dT) = 1$, i.e., $v = -1$. Thus, if $p = 3$, then $x \in \calO_v$ for all $v \neq -1$ and ${\rm{ord}}_{-1}(x) \geq -1$. It follows that $x = \lambda + \mu/(T+1)$ for some $\lambda, \mu \in \F_q$. Plugging this into (\ref{deqn2}), we find that
\begin{equation}
\label{deqn5}
\lambda + \lambda^3 + \frac{\mu}{T+1} + \frac{\mu^3}{(T+1)^3} = -T(T-1)y^3.
\end{equation}
Now we claim that $y \in \calO_v$ for $v = 0, 1$. Indeed, otherwise the left side of (\ref{deqn5}) lies in $\calO_v$ while the right side does not. So we may reduce (\ref{deqn5}) modulo $\mathfrak{m}_0$ and $\mathfrak{m}_1$, and doing so yields two equations in $\lambda + \lambda^3$ and $\mu + \mu^3$ which we solve to find that both equal $0$. This shows that $x$ is of the type asserted in the lemma. One then simply solves for $y$.
\end{proof}

\begin{lemma}
\label{Extnonzero}
$\Ext^1(W, \Gm) \neq 0$.
\end{lemma}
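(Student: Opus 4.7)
The plan is to construct an explicit nontrivial primitive line bundle on $W$ by working with the regular compactification $\overline{W}$ of $W$. By the discussion surrounding (\ref{defofVW}), $\overline{W} \subset \P^2_k$ is the plane projective curve $\{XZ^{p-1} + X^p + aY^p = 0\}$ of arithmetic genus $(p-1)(p-2)/2$, whose complement $\overline{W} \setminus W$ is a single closed point $\infty$ with residue field $k(a^{1/p})$ of degree $p$. Via the standard excision sequence
\[
\Z \cdot [\infty] \longrightarrow \Pic(\overline{W}) \longrightarrow \Pic(W) \longrightarrow 0,
\]
finding a nontrivial primitive class in $\Pic(W) \supset \Ext^1(W, \Gm)$ reduces to analysis of $\Pic(\overline{W})$.

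For $p = 2$: I would handle this case directly. Here $\overline{W}$ is a smooth plane conic, and since $W(k)$ is infinite by the Remark after Lemma \ref{W(k)}, $\overline{W}$ has a $k$-rational point, so $\overline{W} \simeq \P^1_k$. Then $\Pic(\overline{W}) = \Z$ and (since $[\infty]$ has degree $2$) $\Pic(W) \simeq \Z/2\Z$, with nontrivial generator $\calL$ the restriction of $\calO_{\P^1}(1)$. Viewing $W \times W \subset \P^1 \times \P^1$ and excising the two degree-$2$ fibers yields $\Pic(W \times W) \simeq (\Z/2\Z)^2$. A restriction-to-fiber computation identifies $m^*\calL$, $\pi_1^*\calL$, $\pi_2^*\calL$ with $(1,1)$, $(1,0)$, $(0,1)$ respectively in $(\Z/2\Z)^2$ (the class $m^*\calL$ being pinned down by its degree on $W \times \{e\}$ and $\{e\} \times W$, where $m$ restricts to the identity), so $m^*\calL \simeq \pi_1^*\calL \otimes \pi_2^*\calL$ and hence $\calL \in \Ext^1(W, \Gm) \setminus \{0\}$.

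For $p \geq 3$: The arithmetic genus of $\overline{W}$ is $\geq 1$, so $\Pic^0(\overline{W})$ is a nontrivial commutative algebraic group. Using $k$-rational points of $W$ provided by Lemma \ref{W(k)}, I would form a degree-$0$ class $[P] - [Q] \in \Pic^0(\overline{W})(k)$ for distinct $P, Q \in W(k)$, verify its nontriviality (i.e., show no rational function on $\overline{W}$ has divisor $P - Q$, using the explicit equation of $\overline{W}$), and produce a nontrivial class in $\Pic(W)$. Primitivity would follow from a see-saw argument once one checks that $H^0(W, \calO_W^\times) = k^\times$, which holds because nonzero rational functions on $\overline{W}$ that are regular and nonvanishing on $W$ have divisor a multiple of $[\infty]$ of degree zero, forcing them to be constants.

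The main obstacle is the positive-genus case $p \geq 3$: establishing nontriviality of the constructed divisor class in $\Pic(\overline{W})$ requires explicit analysis of the function field of $\overline{W}$, complicated by the fact that $\overline{W}$ is regular but not smooth at $\infty$. Verifying primitivity through the see-saw also takes care since $W$ is not proper, and one must work on the compactified setting and descend.
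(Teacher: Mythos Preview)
Your $p=2$ argument is correct and close in spirit to the paper's: both exploit that the regular compactification is a conic with a degree-$2$ point at infinity, though the paper invokes the general identity $\Ext^1(W,\Gm)=\Pic(W)$ from \cite[Prop.\,5.12]{rospic} together with the criterion \cite[Prop.\,5.4]{rospic} for $\Pic(W)\ne 0$, whereas you compute $\Pic(W\times W)\simeq(\Z/2\Z)^2$ by hand and check primitivity by restriction to the two axes.

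For $p>2$ the approaches diverge completely. The paper does \emph{not} construct a line bundle at all; it computes $\tau(W)=p^2$ via Oesterl\'e's explicit formula \cite[Ch.\,VI, \S7.5]{oesterle} (a calculation over the places $v=0,1,\infty$, and also $v=-1$ when $p=3$, combined with Lemma~\ref{W(k)}), and then appeals to the already-proved commutative case of Theorem~\ref{tamagawaformula} to conclude $\#\Ext^1(W,\Gm)\ge\tau(W)\cdot\#\Sha(W)\ge p^2>1$. This yields more than mere nonvanishing.

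Your geometric route for $p\ge 3$ has a real gap at the primitivity step. The see-saw principle requires one factor to be proper; the hypothesis $H^0(W,\calO_W^\times)=k^\times$ does not substitute for this. You recognize the issue and propose to ``work on the compactified setting and descend,'' but the multiplication map $m$ does not extend to a morphism $\overline{W}\times\overline{W}\to\overline{W}$, so it is unclear what this means. Nor does your $p=2$ strategy of computing $\Pic(W\times W)$ via excision from $\overline{W}\times\overline{W}$ carry over: once $h^1(\overline{W},\calO_{\overline{W}})=(p-1)(p-2)/2>0$, the group $\Pic(\overline{W}\times\overline{W})$ is no longer simply $\Pic(\overline{W})\oplus\Pic(\overline{W})$, so restriction to axes does not pin down classes.

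The gap is easily repaired: \cite[Prop.\,5.12]{rospic}, which the paper already cites for $p=2$, gives $\Ext^1(W,\Gm)=\Pic(W)$ for any $k$-wound form of $\Ga$, so once you have produced a nontrivial class in $\Pic(W)$ you are done. Your nontriviality argument for $[P]-[Q]$ is sound (a principal divisor $P-Q$ with $P,Q$ rational would force $\overline{W}\simeq\P^1_k$, contradicting the positive arithmetic genus). With that citation inserted, your argument goes through and gives an alternative, more geometric proof, albeit one that produces only a single nontrivial class rather than the lower bound $p^2$ that the paper's method extracts.
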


\begin{remark}
\label{Extnonzeroremark}
One can show, by an argument similar to the one used to prove Lemma \ref{Sha=0}, that $\Sha(W) = 0$. In conjunction with the argument in the proof below and the fact that $\Ext^1(W, \Gm)$ is $p$-torsion (because $W$ is), this then shows that in fact
\[
\Ext^1(W, \Gm) = \begin{cases}
\Z/2\Z, & p = 2, \\
(\Z/p\Z)^2, & p > 2.
\end{cases}
\]
We will never use this.
\end{remark}

\begin{proof}
Making the change of variables $X \mapsto -X/T(T-1)$, we see that 
\begin{equation}
\label{eqndef1}
W \simeq \{ Y^p = X + (T(T-1))^{p-1}X^p \}.
\end{equation}
First suppose that $p = 2$. By \cite[Prop.\,5.12]{rospic}, $\Ext^1(W, \Gm) = \Pic(W)$. By \cite[Prop.\,5.4]{rospic}, therefore, in order to show that $\Ext^1(W, \Gm) \neq 0$, it is enough (in fact, equivalent) to show that if $C$ is the regular compactification of the smooth affine curve $W$, then the unique point $Q$ of $C - W$ is not $k$-rational. We claim that the projectivization 
\begin{equation}
\label{Extnonzerolemmacurve}
\{Y^2 = XZ + (T(T-1))X^2\} \subset \P^2_k
\end{equation}
of the equation (\ref{eqndef1}) for $W$ is a regular curve, hence is the regular compactification $C$ sought. The point $Q$ at $\infty$ is the one defined in the affine patch $X \neq 0$ by the equation $Y^2 = T(T-1)$, which is of course not a rational point. This would complete the proof when $p = 2$ (and would in fact show that $\Ext^1(W, \Gm) \simeq \Z/2\Z$ in this case by \cite[Props.\,5.2, 5.4]{rospic}, since the regular compactification $C$ of $W$ has genus $0$).

It remains in the case $p = 2$ to check that the projective curve (\ref{Extnonzerolemmacurve}) is regular. This is immediate away from the locus at infinity (defined by $Z = 0$), since on that locus the curve is just $W$, which is geometrically isomorphic to $\Ga$, hence smooth. It remains to check regularity at the point at infinity. For this, we rewrite the equation in the affine chart on which $X$ does not vanish to yield the affine curve
\[
S: y^2 = z + T(T-1).
\]
We need to check that $S$ is regular at the unique point at which $z = 0$. At this point, the function $z$ vanishes. The closed subscheme $S(z)$ of $S$ defined by the ideal $z$ generates is the spectrum of $k[y]/(y^2 - T(T-1))$, which is a field, because $T(T-1) \in k$ is not a perfect square. It follows that the maximal ideal at the point at infinity on $S$ is generated by the function $z$. Since this maximal ideal is generated by a single element, it follows that $S$ is regular at the point at infinity (i.e., the zero locus of $z$), as desired. This completes the proof for $p = 2$.

Now suppose that $p > 2$. We will show that $\Ext^1(W, \Gm) \neq 0$ by computing $\tau(W)$, and in particular we will show that 
\[
\tau(W) = p^2 > 1.
\]
This is sufficient by Theorem \ref{tamagawaformula}. In order to do this, we apply \cite[Chap.\,VI, \S 7.5, Prop.]{oesterle}, which says that
\begin{equation}
\label{tameqn1}
\tau(W) = \frac{q^{1 - g +N}p^l}{\# W(k)},
\end{equation}
where $g$ is the genus of the curve $X = \P^1_{\F_q}$ of which $k$ is the function field ($g = 0$ in this case), and $N$ and $l$ are defined as follows. Let $b = (T(T-1))^{p-1}$ for notational simplicity. We have
\[
N = \sum_v \left[\frac{{\rm{ord}}_v(db)}{p(p-1)}\right] [k(v): \F_q],
\]
where the sum is over all places $v$ of $k$, the brackets denote the maximum integer function, and $k(v)$ is the residue field of $X$ at $v$. The integer $l$ is defined to be the number of places $v$ of $k$ such that the following holds: the quantity ${\rm{ord}}_v(db) + 1$ is a multiple $m(p-1)$ of $p-1$, and for some (equivalently, any) uniformizer $\pi$ at $v$, the image of the element
\begin{equation}
\label{diffeqn}
\frac{\pi^{1-m(p-1)}}{m} \frac{db}{d\pi}
\end{equation}
in $k(v)$ is a $(p-1)$st power. (The integer $m$ is coprime to $p$, since one cannot have ord$_v(db) \equiv -1 \pmod p$ in characteristic $p$ by a local calculation with power series: if $b = \sum_{n \geq N} c_n\pi^n$ with $c_n \in k(v) \subset k_v$, then $db = \sum_{n \geq N} nc_n\pi^{n-1}d\pi$, so ord$_v(db) = {\rm{min}} \{n-1 \mid nc_n \neq 0\}$, and the minimal such $n$ is obviously nonzero modulo $p$.)

First, we compute that 
\[
db = -(2T-1)(T(T-1))^{p-2}dT, 
\]
so
\[
{\rm{ord}}_v(db) = \begin{cases}
1, & v = 1/2, \\
p-2, & v = 0,1, \\
1 - 2p, & v = \infty, \\
0, & \mbox{otherwise}.
\end{cases}
\]
Thus, $N = -1$. We still need to compute $l$. If $p > 3$, then the only places $v$ for which ${\rm{ord}}_v(db) + 1$ is a multiple of $p-1$ are $v = 0, 1, \infty$. If $p = 3$, then this also holds for $v = 1/2 = -1$. We will check below that in all of these cases, the quantity (\ref{diffeqn}) is a $(p-1)$st power in $k(v)$, so that $l = 3$ for $p > 3$ and $l = 4$ when $p = 3$. Applying (\ref{tameqn1}), therefore, together with Lemma \ref{W(k)}, we see that $\tau(W) = p^2$ for $p > 2$, as desired.

It remains to check that (\ref{diffeqn}) is a $(p-1)$st power in $k(v)$ in all of the cases listed above. In fact, a straightforward computation using the uniformizers $\pi = T, T-1, T^{-1}$, and $2T - 1$ at the places $v = 0, 1, \infty$, and $1/2$ (when $p=3$), respectively, shows that the image of (\ref{diffeqn})  in $k(v)$ equals $-1$ in these cases. Further, the residue field in all of these cases is $\F_q$. The element $-1$ is a $(p-1)$st power in $\F_q$ precisely when $(-1)^{(q-1)/(p-1)} = 1$. We have $(-1)^{(q-1)/(p-1)} = (-1)^{1 + p + \dots + p^{2n-1}} = 1$, so $-1$ is indeed a $(p-1)$st power in $\F_q$.
\end{proof}

For a smooth connected group scheme $G$ over $k$, let $\mathscr{D}G$ denote the derived group of $G$, and $G^{\rm{ab}} := G/\mathscr{D}G$ the abelianization of $G$.

\begin{lemma}
\label{Ext^1=Ext^1ab}
Let $G$ be a smooth connected unipotent group over an arbitrary field $k$. Then the map ${\rm{H}}^1(k, \widehat{G}) \rightarrow \Ext^1(G, \Gm)$ appearing in the exact sequence ($\ref{H^1(G^)=kerExt}$) is an isomorphism. Further, the pullback map $\Ext^1(G^{{\rm{ab}}}, \Gm) \rightarrow \Ext^1(G, \Gm)$ is an isomorphism, and the map $\Ext^1(G, \Gm) \rightarrow \Ext^1(\mathscr{D}G, \Gm)$ is $0$.
\end{lemma}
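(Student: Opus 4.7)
The plan is to reduce part 1 to the vanishing of $\Ext^1$ over $\overline{k}$, and then to deduce parts 2 and 3 by identifying the pullback maps on the fppf sheaves $\widehat{G}$, $\widehat{G^{\rm ab}}$, and $\widehat{\mathscr{D}G}$.

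For part 1, I would apply the exact sequence (\ref{H^1(G^)=kerExt}); connectedness of $G$ ensures that $\Ext_{\rm cent}(G, \Gm)$ coincides with $\Ext^1(G, \Gm)$ via the discussion following (\ref{extdefn}). It thus suffices to show that $\Ext^1(G_{\overline{k}}, (\Gm)_{\overline{k}}) = 0$. Over the perfect field $\overline{k}$, the smooth connected unipotent group $G_{\overline{k}}$ is split, hence isomorphic as a scheme to an affine space, and in particular has trivial Picard group. The excerpt recalls that over any perfect field the containment $\Ext^1(H, \Gm) \subset \Pic(H)$ is an equality for any connected linear algebraic group $H$, so $\Ext^1(G_{\overline{k}}, (\Gm)_{\overline{k}}) = 0$, proving part 1.

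For part 2, I would observe that the fppf sheaf $\widehat{G}$ depends only on the abelianization of $G$. Indeed, for any $k$-algebra $A$, the derived group of $G_A$ is $(\mathscr{D}G)_A$ (formation of derived groups of smooth connected groups commutes with arbitrary base change), and any $A$-group homomorphism $G_A \to (\Gm)_A$ kills it since $\Gm$ is commutative. Hence the pullback map $\widehat{G^{\rm ab}} \to \widehat{G}$ induced by the quotient $\pi: G \to G^{\rm ab}$ is an isomorphism of fppf sheaves. Both $G$ and $G^{\rm ab}$ are smooth connected unipotent, so part 1 applies to each; by the naturality of the identifications in part 1 with respect to $\pi$, one obtains the pullback isomorphism $\Ext^1(G^{\rm ab}, \Gm) \xrightarrow{\sim} \Ext^1(G, \Gm)$.

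For part 3, the inclusion $\iota: \mathscr{D}G \hookrightarrow G$ induces the restriction map $\widehat{G} \to \widehat{\mathscr{D}G}$, and this is the zero map of fppf sheaves since any $A$-group homomorphism $G_A \to (\Gm)_A$ vanishes on $(\mathscr{D}G)_A$. Because $\mathscr{D}G$ is a smooth connected subgroup of $G$ (standard for smooth connected $G$) and is again unipotent, part 1 applies to $\mathscr{D}G$, and naturality with respect to $\iota$ transports the vanishing of $\widehat{G} \to \widehat{\mathscr{D}G}$ into the vanishing of the pullback $\Ext^1(G, \Gm) \to \Ext^1(\mathscr{D}G, \Gm)$. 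The main obstacle in this approach is verifying the naturality invoked in parts 2 and 3 of the isomorphism in part 1 with respect to an arbitrary homomorphism of smooth connected groups; this is a functoriality check on the construction of the first map in (\ref{H^1(G^)=kerExt}) via the identification of a $\widehat{G}$-torsor with the automorphism sheaf of the trivial central extension, and should follow from the definitions provided one is careful in tracing through the compatibility between fppf cohomology and Yoneda $\Ext$.
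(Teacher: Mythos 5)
Your argument is correct and follows the same route as the paper: reduce the first assertion to the vanishing of $\Ext^1(G_{\overline{k}}, \Gm)$, which holds because $G_{\overline{k}}$ is split unipotent and hence isomorphic as a $\overline{k}$-scheme to affine space (so $\Pic(G_{\overline{k}}) = 0$, and $\Ext^1$ sits inside $\Pic$ by definition), then deduce the remaining assertions from the identification $\widehat{G} = \widehat{G^{\rm ab}}$ together with the functoriality of the sequence $(\ref{H^1(G^)=kerExt})$, which the paper already records. Your detour through the perfect-field equality $\Ext^1 = \Pic$ is unnecessary — the containment $\Ext^1(G_{\overline{k}},\Gm) \subset \Pic(G_{\overline{k}})$ from $(\ref{extdefn})$ already suffices — but it is not an error.
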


\begin{proof}
The first assertion implies the others, since $\widehat{G} = \widehat{G^{{\rm{ab}}}}$. To prove the first assertion, it suffices to show that $\Ext^1_{\overline{k}}(G, \Gm) = 0$. In fact, $\Pic(G_{\overline{k}}) = 0$ because $G_{\overline{k}}$ is split unipotent, hence isomorphic as a $\overline{k}$-scheme to some affine $n$-space, so we are done.
\end{proof}

We will repeatedly use the following formula due to Oesterl\'e for the behavior of Tamagawa numbers in exact sequences.

\begin{lemma}
\label{tamsinsequences}
Suppose given an exact sequence of connected linear algebraic groups over a global field $k$:
\[
1 \longrightarrow G' \xlongrightarrow{j} G \xlongrightarrow{\pi} G'' \longrightarrow 1
\]
such that $\pi(G(\A))$ is normal in $G''(\A)$. Then
\[
\tau(G) \cdot \left| \frac{G''(\A)}{\pi(G(\A))G''(k)} \right| = \tau(G')\tau(G'')|\ker(\Sha^1(j))|\cdot |\coker(\widehat{j})|^{-1}.
\]
\end{lemma}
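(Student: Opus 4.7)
The plan is to combine a Fubini-type decomposition of Tamagawa measures along the exact sequence with a bookkeeping of fundamental domains via non-abelian cohomology, and then account for the discrepancies introduced by characters. First, I would choose compatible top-degree invariant differential forms $\omega', \omega, \omega''$ on $G', G, G''$ together with convergence factors, so that the local Tamagawa measures $\mu'_v, \mu_v, \mu''_v$ at each place $v$ satisfy a Fubini identity for the short exact sequence $1 \to G'(k_v) \to G(k_v) \to \pi(G(k_v)) \to 1$. Passing to adeles and using the finiteness of the convergence factors, this yields a Fubini identity $\mu_G = \mu_{G'} \cdot \pi^* \mu''$ in a suitable sense on $G(\A)$, reducing the computation of $\tau(G)$ to an integral over (a suitable quotient of) $\pi(G(\A))$ against a fiber-volume given by $\tau(G') = \mu_{G'}(G'(k) \backslash G'(\A)^1)$.

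Second, I would analyze the comparison between $G(k) \backslash G(\A)^1$ and the analogous quotients for $G'$ and $G''$ by unpacking the non-abelian cohomology sequence
\[
G(k) \longrightarrow G''(k) \xlongrightarrow{\delta} {\rm{H}}^1(k, G') \xlongrightarrow{\Sha^1(j)} {\rm{H}}^1(k, G)
\]
and its local analogues, together with the restriction to $\Sha^1$. The point is that fibers of $\pi: G(k) \backslash G(\A) \to G''(k) \backslash G''(\A)$ are not simply $G'(k) \backslash G'(\A)$ but rather a union of twists indexed by $\delta(G''(k)) \cap \Sha^1(G')$; an orbit-stabilizer count on these twists, combined with the fact that all twists of $G'$ by elements of this kernel have the same Tamagawa number (e.g.\ by an inner-twisting argument, or by directly comparing the measures), produces the $|\ker(\Sha^1(j))|$ factor. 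The defect $G''(\A) / \pi(G(\A)) G''(k)$ enters naturally because only the image subgroup $\pi(G(\A))$ is controlled by $G$, while we want to integrate over all of $G''(\A)$.

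Third, the factor $|{\rm{coker}}(\widehat{j})|^{-1}$ comes from reconciling the norm-one subgroups $G(\A)^1$, $G'(\A)^1$, $G''(\A)^1$, which are defined by intersecting the kernels of the archimedean absolute values of characters in $\widehat{G}(k)$, $\widehat{G'}(k)$, $\widehat{G''}(k)$ respectively. The pullback map $\widehat{j}: \widehat{G}(k) \to \widehat{G'}(k)$ fails to be surjective precisely when characters of $G'$ do not extend to $G$, and this defect causes $\pi(G(\A)^1)$ to differ from $G''(\A)^1 \cap \pi(G(\A))$ by a lattice of covolume equal to $|{\rm{coker}}(\widehat{j})|$, which appears in the denominator after translating measures. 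Combining the measure-theoretic factorization, the cohomological orbit count, and the character normalization yields the stated identity. The principal obstacle will be getting the character normalizations exactly right: the interaction between $G(\A)^1$, $G'(\A)^1$, and $G''(\A)^1$ in the exact sequence is subtle, and carefully tracking it (including the contribution of non-archimedean places where $|\chi|$ is nontrivial) is the technical heart of the argument. For this reason, rather than redo the entire computation, I would invoke Oesterlé's treatment in \cite[Chap.\,I,\,\S 5]{oesterle} and verify only that our hypothesis that $\pi(G(\A))$ is normal in $G''(\A)$ permits the fundamental-domain argument to go through in the non-commutative setting.
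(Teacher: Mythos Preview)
Your approach and the paper's converge on the same endpoint: both ultimately invoke Oesterl\'e's exact-sequence formula rather than reprove it. The paper's proof is essentially one sentence citing \cite[Chap.\,III, \S 5.3, Thm.]{oesterle} (note: not Chapter~I, \S 5 as you write---Chapter~I sets up Tamagawa measures, while the exact-sequence identity lives in Chapter~III). Your extended sketch of the Fubini decomposition, the cohomological fiber count producing $|\ker(\Sha^1(j))|$, and the character-group correction producing $|\coker(\widehat{j})|^{-1}$ is a faithful outline of Oesterl\'e's own argument, so it is correct in spirit but redundant once you cite him.

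What you miss is the one substantive point the paper's proof actually makes. Oesterl\'e states his formula under the standing hypotheses that $\tau(G')$, $\tau(G'')$, and $|\ker(\Sha^1(j))|$ are finite; at the time these were genuinely open in general over function fields. The entire content of the paper's proof is the observation that these finiteness statements are now theorems, via \cite[Thm.\,1.3.3(i), Thm.\,1.3.6]{conrad} (with a caveat about the stated generality of the former). Without supplying this input, your citation of Oesterl\'e is conditional on unverified hypotheses. By contrast, your proposed verification---that normality of $\pi(G(\A))$ in $G''(\A)$ lets the fundamental-domain argument go through---is not an issue: Oesterl\'e's theorem is already stated in exactly that generality, so nothing extra is needed there.
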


\begin{proof}
This is Lemma \ref{tamsinsequences}, except there it is stated under the assumption that $\tau(G')$, $\tau(G'')$, and $\ker(\Sha^1(j))$ are all finite. Thanks to \cite[Thm.\,1.3.3(i), Thm.\,1.3.6]{conrad}, these assumptions always hold. (Remark: we should warn the reader that \cite[Thm.\,1.3.3(i)]{conrad} is incorrect as stated. The claimed finiteness only holds in the generality claimed there for $S = \emptyset$, which is fortunately the only case we need; see \cite[Cor.\,5.19, Rmk.\,5.17]{rospic}.)
\end{proof}

We may now show that the conclusions of Theorems \ref{tamagawaformula} and \ref{pseudoredcomplexexact} fail for the group $U$.

\begin{theorem}
\label{tamfails}
For $k = \F_{p^{2n}}(T)$ and $a=T(T-1) \in k$, let $V_a$ and $W_a$ be as in $(\ref{defofVW})$.
Define the $2$-dimensional wound non-commutative $k$-group extension $U$ of $V_a$ by $W_a$ as follows:
\begin{itemize}
\item If $p>2$, then define $U=U_a$ as in $(\ref{uadef})$ with group law
$(\ref{ugplaw})$ resting on $h$ as in $(\ref{hadd})$. 
\item If $p=2$, then upon choosing a primitive cube root of unity $\zeta \in \F_4 \subset
\F_{p^{2n}}$, define $U = U_a^{\zeta}$ as in $(\ref{uazeta})$ with group law
$(\ref{uzetagp})$ resting on $h_{\zeta}$ as in $(\ref{hzeta})$ and $(\ref{hplus})$.
\end{itemize}

Then Theorem {\rm{\ref{tamagawaformula}}} fails for $U$.  That is, 
$\tau(U) \neq \# \Ext^1(U, \Gm)/\# \Sha(U)$.
\end{theorem}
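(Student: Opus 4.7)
The plan is to leverage the extension $1 \to W \to U \to V \to 1$, combining Oesterl\'e's formula (Lemma \ref{tamsinsequences}) with the Sansuc-style formula (Theorem \ref{tamagawaformula}) applied to the commutative groups $W$ and $V$. The goal is an explicit formula for $\tau(U)$ that visibly differs from $\#\Ext^1(U, \Gm)/\#\Sha(U)$ by the factor $\#\Ext^1(W, \Gm)$, which by Lemma \ref{Extnonzero} is at least $2$.

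The first step is to identify $U^{\rm{ab}}$ with $V$. Since $V \simeq U/W$ is commutative, $\mathscr{D}U \subset W$; since $U$ is non-commutative (by construction), $\mathscr{D}U \neq \{1\}$. Because $\mathscr{D}U$ is smooth and connected inside the $1$-dimensional smooth connected group $W$, we must have $\mathscr{D}U = W$, and so $U^{\rm{ab}} = V$. Lemma \ref{Ext^1=Ext^1ab} then gives a pullback isomorphism $\Ext^1(V, \Gm) \xrightarrow{\sim} \Ext^1(U, \Gm)$, so in particular $\#\Ext^1(U, \Gm) = \#\Ext^1(V, \Gm)$.

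Next I would apply Oesterl\'e's formula to $1 \to W \to U \to V \to 1$. The groups $W,V$ are unipotent so $\widehat{W} = \widehat{V} = 0$, whence $\coker(\widehat{j}) = 0$. Since $U = W \times V$ as a $V$-scheme (using $\F_4 \subset k$ when $p = 2$, which holds because $q = p^{2n}$), the map $U(\A) \to V(\A)$ is surjective, so $|V(\A)/\pi(U(\A)) V(k)| = 1$; and the commutativity of $V$ automatically supplies the normality hypothesis. The outcome is $\tau(U) = \tau(W)\tau(V)|\ker \Sha^1(j)|$. Theorem \ref{tamagawaformula} applied to the commutative groups $W$ and $V$, together with $\Sha(V) = 0$ (Lemma \ref{Sha=0}), then yields $\tau(W) = \#\Ext^1(W, \Gm)/\#\Sha(W)$ and $\tau(V) = \#\Ext^1(V, \Gm) = \#\Ext^1(U, \Gm)$.

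The final ingredient is the identity $\#\Sha(W) = |\ker \Sha^1(j)| \cdot \#\Sha(U)$. Direct inspection of the group laws $(\ref{ugplaw})$ and $(\ref{uzetagp})$ shows that $W$ sits centrally in $U$; so the long exact sequence in non-abelian cohomology makes every fiber of ${\rm{H}}^1(k, W) \to {\rm{H}}^1(k, U)$ a coset of the image of the connecting map $\delta: V(k) \to {\rm{H}}^1(k, W)$. Combining centrality with $\Sha(V) = 0$ and with the surjectivity of $U(\A) \to V(\A)$ (which forces the adelic connecting map $V(\A) \to {\rm{H}}^1(\A, W)$ to vanish, whence $\delta(V(k)) \subset \Sha(W)$), one checks that $\Sha(W) \twoheadrightarrow \Sha(U)$ is surjective with every fiber of constant size $|\ker \Sha^1(j)|$. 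Assembling all of the above gives
\[
\tau(U) = \frac{\#\Ext^1(W, \Gm) \cdot \#\Ext^1(U, \Gm)}{\#\Sha(U)},
\]
and Lemma \ref{Extnonzero} then yields $\tau(U) \neq \#\Ext^1(U, \Gm)/\#\Sha(U)$ (the two sides differ by the factor $\#\Ext^1(W, \Gm) \geq 2$). The main obstacle, I expect, is this last $\Sha$-comparison, since it rests on orbit-structure arguments in the non-abelian pointed set $\Sha(U)$ rather than on a purely group-theoretic exact sequence.
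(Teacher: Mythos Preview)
Your proposal is correct and follows essentially the same route as the paper: identify $\mathscr{D}U = W$, apply Oesterl\'e's formula to the extension, invoke Theorem~\ref{tamagawaformula} for the commutative pieces, and finish with Lemmas~\ref{Ext^1=Ext^1ab} and~\ref{Extnonzero}. The only difference is that the paper observes (for the same reason you gave for $U(\A)\to V(\A)$) that $U(k)\to V(k)$ is already surjective, so the connecting map $\delta$ is identically zero; this makes ${\rm H}^1(k,W)\to{\rm H}^1(k,U)$ injective over $k$ and each $k_v$, giving a direct bijection $\Sha(W)\xrightarrow{\sim}\Sha(U)$ and $|\ker\Sha^1(j)|=1$, which shortcuts your orbit-size argument.
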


\begin{proof}
We have the exact sequence
\[
1 \longrightarrow W \longrightarrow U \longrightarrow V \longrightarrow 1.
\]
The maps $U(k) \rightarrow V(k)$ and $U(\A) \rightarrow V(\A)$ are surjective. (When $p=2$, this uses the fact that $\F_4 \subset k$.) We claim that the map $\Sha(W) \rightarrow \Sha(U)$ is a bijection. We first show that even the map ${\rm{H}}^1(k, W) \rightarrow {\rm{H}}^1(k, U)$ is injective. Since $W \subset U$ is central, two elements of ${\rm{H}}^1(k, W)$ have the same image in ${\rm{H}}^1(k, U)$ if and only if they differ by an element of $\delta(V(k))$, where $\delta: V(k) \rightarrow {\rm{H}}^1(k, W)$ is the connecting map \cite[Chap.\,I, \S 5.5, Prop.\,39(ii) and \S 5.6, Cor.\,2]{serre}. But this connecting map is trivial, since $U(k) \rightarrow V(k)$ is surjective. Similarly, the map ${\rm{H}}^1(k_v, W) \rightarrow {\rm{H}}^1(k_v, U)$ is injective for all $v$. For surjectivity, we note that any element of $\Sha(U)$ maps to $\Sha(V) = 0$ (Lemma \ref{Sha=0}), hence lifts to an element of ${\rm{H}}^1(k, W)$, which must lie in $\Sha(W)$ by the injectivity of the maps ${\rm{H}}^1(k_v, W) \rightarrow {\rm{H}}^1(k_v, U)$.

Since $W$ is unipotent, $\widehat{W}(k) = 0$, so by Lemma \ref{tamsinsequences},
\[
\tau(U) = \tau(W) \tau(V).
\]
By Theorem \ref{tamagawaformula} applied to the commutative groups $V$ and $W$, and Lemma \ref{Sha=0}, we therefore obtain
\[
\tau(U) = \frac{\# \Ext^1(W, \Gm) \cdot \# \Ext^1(V, \Gm)}{\# \Sha(W)}.
\]
Now we claim that $W = \mathscr{D}U$. This may be seen directly, but it also follows from dimension considerations as follows. Since $V$ is commutative, $\mathscr{D}U \subset W$. Since $U$ is non-commutative, $\mathscr{D}U$ is a nontrivial smooth connected $k$-group, hence, since $W$ is $1$-dimensional, we must have $\mathscr{D}U = W$. Thus, $V = U^{\rm{ab}}$. By Lemma \ref{Ext^1=Ext^1ab} and the fact that $\Sha(W) \xrightarrow{\sim} \Sha(U)$ proved above, we therefore obtain
\[
\tau(U) = \frac{\# \Ext^1(W, \Gm) \cdot \# \Ext^1(U, \Gm)}{\# \Sha(U)}.
\]
Lemma \ref{Extnonzero} now shows that $\tau(U) \neq \# \Ext^1(U, \Gm)/\# \Sha(U)$.
\end{proof}

\begin{theorem}
\label{complexnotexactU}
Theorem {\rm{\ref{pseudoredcomplexexact}}} fails for the wound $2$-dimensional non-commutative group $U$ over the global field $k = \F_{p^{2n}}(T)$. That is, the complex
\[
{\rm{H}}^1(k, U) \longrightarrow {\rm{H}}^1(\A, U) \longrightarrow \Ext^1(U, \Gm)^*
\]
of pointed sets is not exact.
\end{theorem}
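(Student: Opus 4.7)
The plan is to argue by contradiction, using the central extension $1 \to W \to U \to V \to 1$ together with facts established in (or immediately following from) the proof of Theorem \ref{tamfails}: $W = \mathscr{D}U$ and $V = U^{{\rm{ab}}}$; $\Sha(V) = 0$ (Lemma \ref{Sha=0}); $\Ext^1(W, \Gm) \neq 0$ (Lemma \ref{Extnonzero}); $\Ext^1(V, \Gm) \xrightarrow{\sim} \Ext^1(U, \Gm)$ while the restriction $\Ext^1(U, \Gm) \to \Ext^1(W, \Gm)$ is $0$ (Lemma \ref{Ext^1=Ext^1ab}); and the map ${\rm{H}}^1(k_v, W) \to {\rm{H}}^1(k_v, U)$ is injective for every place $v$ (from the proof of Theorem \ref{tamfails}, using $U(k_v) \twoheadrightarrow V(k_v)$). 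The key auxiliary tool is Theorem \ref{pseudoredcomplexexact} applied to the commutative group $W$.

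Assume for contradiction that the $U$-complex is exact. Let $\alpha_W \in {\rm{H}}^1(\A, W)$ be any class, with image $\bar{\alpha} \in {\rm{H}}^1(\A, U)$. First, $\bar{\alpha}$ pairs trivially with $\Ext^1(U, \Gm)$: any extension $E$ of $U$ by $\Gm$ is pulled back from an extension of $V$, so the local connecting map $\delta^E_v \colon {\rm{H}}^1(k_v, U) \to {\rm{H}}^2(k_v, \Gm)$ factors through ${\rm{H}}^1(k_v, V)$ and kills anything from ${\rm{H}}^1(k_v, W)$. By the assumed exactness, $\bar{\alpha}$ lifts to some $\beta \in {\rm{H}}^1(k, U)$. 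The image of $\beta$ in ${\rm{H}}^1(k, V)$ is locally trivial (since $\bar{\alpha}$ maps to zero in ${\rm{H}}^1(\A, V)$), so it lies in $\Sha(V) = 0$; by exactness of non-abelian cohomology \cite[Chap.\,I, \S 5.5]{serre} for $1 \to W \to U \to V \to 1$, $\beta$ is the image of some $\gamma \in {\rm{H}}^1(k, W)$. Local injectivity of ${\rm{H}}^1(k_v, W) \to {\rm{H}}^1(k_v, U)$ then forces $\gamma$ and $\alpha_W$ to agree place-by-place in ${\rm{H}}^1(\A, W)$. Hence ${\rm{H}}^1(k, W) \to {\rm{H}}^1(\A, W)$ is surjective.

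To obtain a contradiction I show this surjectivity fails. By Theorem \ref{pseudoredcomplexexact} applied to the commutative group $W$, the cokernel of ${\rm{H}}^1(k, W) \to {\rm{H}}^1(\A, W)$ is identified with the image of ${\rm{H}}^1(\A, W) \to \Ext^1(W, \Gm)^*$, so it suffices to show this latter map is nonzero. This is the only step requiring input beyond what already appears in the paper. Combining local Tate duality for commutative unipotent groups over $k_v$ (the perfect pairing ${\rm{H}}^1(k_v, W) \times \Ext^1_{k_v}(W, \Gm) \to \Q/\Z$) with the vanishing $\Sha^1(k, \widehat{W}) = 0$ (which follows from the Poitou-Tate-style global duality for commutative affine group schemes over global function fields of \cite{rostateduality} combined with $\Sha(W) = 0$ from Remark \ref{Extnonzeroremark}, using the identification ${\rm{H}}^1(k, \widehat{W}) \cong \Ext^1(W, \Gm)$ of Lemma \ref{Ext^1=Ext^1ab}), one selects a nonzero $E \in \Ext^1(W, \Gm)$ with nontrivial image in some $\Ext^1_{k_v}(W, \Gm)$, uses local perfectness to produce an $\alpha_v \in {\rm{H}}^1(k_v, W)$ pairing nontrivially with $E_v$, and extends by zero elsewhere to get an adelic class witnessing nontriviality. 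The main obstacle is this final duality-based step; everything preceding it is a direct diagram-chase in non-abelian cohomology.
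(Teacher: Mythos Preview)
Your reduction to showing that $\Che^1(W) := \coker({\rm H}^1(k,W) \to {\rm H}^1(\A,W))$ is nonzero is exactly the paper's approach, and your diagram chase (using $\Sha(V)=0$, $W = \mathscr{D}U$, Lemma~\ref{Ext^1=Ext^1ab}, and local injectivity of ${\rm H}^1(k_v,W)\to{\rm H}^1(k_v,U)$) matches the paper's argument closely.

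The only divergence is in how you establish $\Che^1(W)\neq 0$. The paper does this in one stroke: since ${\rm H}^2(k,W)=0$ \cite[Prop.\,2.5.4(i)]{rostateduality}, the global Tate duality exact sequence \cite[Thm.\,1.2.8]{rostateduality} gives an isomorphism $\Che^1(W)\simeq {\rm H}^1(k,\widehat{W})^*\simeq \Ext^1(W,\Gm)^*$, which is nonzero by Lemma~\ref{Extnonzero}. Your route instead passes through Theorem~\ref{pseudoredcomplexexact} for $W$, local Tate duality, and the vanishing $\Sha^1(k,\widehat{W})=0$, which you deduce from $\Sha(W)=0$ and a Poitou--Tate duality between $\Sha^1(W)$ and $\Sha^1(\widehat{W})$. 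This is correct in outline, but note that $\Sha(W)=0$ is only asserted, not proved, in Remark~\ref{Extnonzeroremark} (the paper explicitly says ``We will never use this''), so you are leaning on an unproven statement. The paper's use of ${\rm H}^2(k,W)=0$ avoids this entirely and is both shorter and self-contained; you should replace your final paragraph with that argument.
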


\begin{proof}
Once again, we have the exact sequence
\[
1 \longrightarrow W \longrightarrow U \longrightarrow V \longrightarrow 1.
\]
For any $\alpha \in {\rm{H}}^1(\A, W)$, the image of $\alpha$ in ${\rm{H}}^1(\A, U)$ maps to $0 \in \Ext^1(U, \Gm)^*$. Indeed, this follows from the commutative diagram
\[
\begin{tikzcd}
{\rm{H}}^1(\A, W) \arrow{r} \arrow{d} & {\rm{H}}^1(\A, U) \arrow{d} \\
\Ext^1(W, \Gm)^* \arrow{r} & \Ext^1(U, \Gm)^*
\end{tikzcd}
\]
in which the bottom map is $0$ by Lemma \ref{Ext^1=Ext^1ab} since $W = \mathscr{D}U$, as we saw in the proof of Theorem \ref{tamfails}. It therefore suffices to construct an element $\alpha \in {\rm{H}}^1(\A, W)$ whose image in ${\rm{H}}^1(\A, U)$ does not lift to a class in ${\rm{H}}^1(k, U)$.

We claim that an element $\alpha \in {\rm{H}}^1(\A, W)$ has image in ${\rm{H}}^1(\A, U)$ that lifts to a global class in ${\rm{H}}^1(k, U)$ if and only if $\alpha$ itself lifts to ${\rm{H}}^1(k, W)$. Clearly, if $\alpha$ lifts to ${\rm{H}}^1(k, W)$, then its image in ${\rm{H}}^1(\A, U)$ lifts to a global class. Conversely, let $j: {\rm{H}}^1(\A, W) \rightarrow {\rm{H}}^1(\A, U)$ denote the map induced by the inclusion $W \hookrightarrow U$, and suppose that $j(\alpha)$ lifts to $u \in {\rm{H}}^1(k, U)$. Then the image of $u$ in ${\rm{H}}^1(k, V)$ lies in $\Sha(V)$, which vanishes by Lemma \ref{Sha=0}. Thus, $u$ lifts to some class $w \in {\rm{H}}^1(k, W)$. Let $w_{\A}$ denote the image of $w$ in ${\rm{H}}^1(\A, W)$. Then $j(\alpha) = j(w_{\A})$. But, as we discussed in the proof of Theorem \ref{tamfails}, the map $j$ is injective due to the surjectivity of the map $U(\A) \rightarrow V(\A)$. Therefore, $\alpha = w_{\A}$. That is, $\alpha$ lifts to the class $w \in {\rm{H}}^1(k, W)$. This proves the claim.

It therefore only remains to show that $\Che^1(W) := \coker({\rm{H}}^1(k, W) \rightarrow {\rm{H}}^1(\A, W))$ is nonzero. But ${\rm{H}}^2(k, W) = 0$ \cite[Prop.\,2.5.4(i)]{rostateduality}, so by global Tate duality for affine schemes \cite[Thm.\,1.2.8]{rostateduality}, we have an isomorphism $\Che^1(W) \simeq {\rm{H}}^1(k, \widehat{W})^*$. We also have an isomorphism ${\rm{H}}^1(k, \widehat{W}) \simeq \Ext^1(W, \Gm)$ \cite[Cor.\,2.3.4]{rostateduality}, so the desired nonvanishing follows from Lemma \ref{Extnonzero}.
\end{proof}

\section{Relation between Tate-Shafarevich sets and Tamagawa numbers under inner twisting}
\label{sectionpathologies2}

The main result of this section is the following proposition, which shows that under certain hypotheses, the Tamagawa numbers and the (sizes of the) Tate-Shafarevich sets of a connected linear algebraic group $G$ are essentially inversely proportional as one varies over the inner forms of $G$. This will allow us to prove Theorem \ref{innerinvariancefailsforU} by showing that for suitable groups, $\tau$ can become arbitrarily small under inner twisting, and therefore automatically $\Sha$ becomes arbitrarily large.

\begin{proposition}
\label{tautimesSha}
Suppose that we have a central extension
\[
1 \longrightarrow G' \longrightarrow G \longrightarrow G'' \longrightarrow 1
\]
of connected linear algebraic groups over a global function field $k$ such that $G''$ is commutative. Suppose that either $\Sha(G'') = 0$ or that $G''(k)$ is finite. Then there are constants $c, d > 0$ $($depending on $G$$)$ such that for all inner forms $\widetilde{G}$ of $G$,
\[
c < \tau(\widetilde{G}) \cdot \# \Sha(\widetilde{G}) < d.
\]
\end{proposition}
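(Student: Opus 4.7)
My plan is to apply Lemma \ref{tamsinsequences} to the given central extension, taking advantage of the fact that both $G'$ (being central in $G$) and $G''$ (being commutative) are preserved under inner twisting. Consequently, for every inner form $\widetilde{G}$ of $G$ the same central extension $1 \to G' \to \widetilde{G} \to G'' \to 1$ holds, with only the extension class (in ${\rm{H}}^2(k, G')$, or equivalently $\Ext^1(G'', G')$) varying. Oesterl\'e's formula then gives
\[
\tau(\widetilde{G}) \cdot \bigl[G''(\A) : \pi(\widetilde{G}(\A))G''(k)\bigr] = \tau(G')\tau(G'') \cdot \frac{|\ker(\Sha^1(j_{\widetilde{G}}))|}{|\coker(\widehat{j_{\widetilde{G}}})|},
\]
in which the first two factors on the right are fixed positive constants independent of $\widetilde{G}$.

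Next I would use the long exact Galois cohomology sequences over $k$ and $\A$ attached to the extension to rewrite both the local-global index $[G''(\A) : \pi(\widetilde{G}(\A))G''(k)]$ and the cardinality $\#\Sha(\widetilde{G})$ in terms of the connecting maps $\delta : G''(k) \to {\rm{H}}^1(k, G')$ and $\delta_{\A} : G''(\A) \to {\rm{H}}^1(\A, G')$ (both of which depend on $\widetilde{G}$). Under the hypothesis $\Sha(G'') = 0$, every class in $\Sha(\widetilde{G})$ lifts to ${\rm{H}}^1(k, G')$, and the fibers of $j_\ast : {\rm{H}}^1(k, G') \to {\rm{H}}^1(k, \widetilde{G})$ above $\Sha(\widetilde{G})$ are $\delta(G''(k))$-orbits. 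Combining this with Oesterl\'e's formula, the product $\tau(\widetilde{G}) \cdot \#\Sha(\widetilde{G})$ reduces to an explicit expression involving $\tau(G')$, $\tau(G'')$, $|\Sha(G')|$, $|\coker(\widehat{j_{\widetilde{G}}})|$, and certain cardinalities of subsets of ${\rm{H}}^1(\A, G')$ related to the image $\delta_{\A}(G''(\A))$.

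The main obstacle is to bound these remaining factors uniformly in $\widetilde{G}$. The cokernel $|\coker(\widehat{j_{\widetilde{G}}})|$ is sandwiched by the exact sequence $\widehat{G''} \to \widehat{\widetilde{G}} \to \widehat{G'}$ into a range depending only on the character groups of the fixed $G'$ and $G''$. For $|\delta_{\A}(G''(\A))|$ I would use a spreading-out argument to see that the local connecting map $\delta_v$ lands in the trivial group at all but a fixed finite set of places $v$ (depending only on $G$, not on $\widetilde{G}$), so this cardinality is uniformly bounded by a product over that fixed finite set of the finite local cohomologies ${\rm{H}}^1(k_v, G')$. The remaining counting factors, including the ratio $|\ell^{-1}(\delta_{\A}(G''(\A)))|/|\delta_{\A}(G''(\A))|$ where $\ell$ denotes the localization ${\rm{H}}^1(k, G') \to {\rm{H}}^1(\A, G')$, then lie between universal positive constants controlled by the finite groups $\Sha(G')$ and ${\rm{H}}^1(k, G')$, the latter being finite by Borel--Serre--Oesterl\'e--Conrad.

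Finally, the alternative hypothesis that $G''(k)$ is finite admits a parallel but simpler analysis: the resulting finiteness of $\delta(G''(k))$ bypasses the lifting step via $\Sha(G'') = 0$ and directly bounds both $\# \Sha(\widetilde{G})$ and the index $[G''(\A) : \pi(\widetilde{G}(\A))G''(k)]$ in terms of fixed invariants of $G'$ and $G''$, giving the required two-sided bound in this case as well.
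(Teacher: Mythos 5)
Your high-level plan is right and essentially tracks the paper's: start from Oesterl\'e's formula (Lemma \ref{tamsinsequences}) for the twisted central extension, note that $G'$ and $G''$ are unchanged under inner twisting, and then show that the troublesome index $[G''(\A):\pi_{\beta}(\widetilde{G}(\A))G''(k)]$ on the left is $\approx \#\Sha(\widetilde{G})$ so that the product $\tau(\widetilde{G}) \cdot \#\Sha(\widetilde{G})$ is controlled. However, there is a concrete false step in your outline.

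You assert that $|\delta_{\A}(G''(\A))|$ is uniformly bounded via a spreading-out argument, claiming the local connecting maps are trivial outside a finite set of places \emph{depending only on $G$, not on $\widetilde{G}$}. This is not true: inner twisting changes the extension class of $1 \to G' \to \widetilde{G} \to G'' \to 1$, and hence the local connecting maps $\delta_{\beta,v}$ depend on the twist. In fact, the paper's later Theorem \ref{innerinvariancefails} (via Lemma \ref{connectingmap_a}) constructs, for each finite set $S$ of places, a twist for which $\delta_{\beta,v}$ is nonzero at every $v\in S$; equivalently, $[G''(\A):\pi_{\beta}(\widetilde{G}(\A))G''(k)]$ --- and a fortiori $|\delta_{\A}(G''(\A))|$ --- is unbounded as $\widetilde{G}$ varies. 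This unboundedness is precisely what drives $\tau(\widetilde{G})\to 0$ and $\#\Sha(\widetilde{G})\to\infty$, so any proof that tries to bound either quantity (or the index) in isolation is doomed. The same objection applies to your treatment of the case $G''(k)$ finite, where you claim to ``directly bound both $\#\Sha(\widetilde{G})$ and the index in terms of fixed invariants of $G'$ and $G''$''; neither is bounded in terms of fixed data.

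The salvageable part of your plan is the ratio $|\ell^{-1}(\delta_{\A}(G''(\A)))|/|\delta_{\A}(G''(\A))|$, which is genuinely bounded between positive constants depending only on $G'$ (using $\ker(\ell)=\Sha(G')$ and that $\coker(\ell)=\Che^1(G')$ is finite). After cancellation, $\tau(\widetilde{G})\cdot\#\Sha(\widetilde{G})$ depends only on this ratio together with other fixed finite quantities, so $|\delta_{\A}(G''(\A))|$ never needs to be bounded by itself. The paper carries out the cancellation cleanly by exhibiting a four-term exact sequence of finite abelian groups
$\Sha(G') \to \ker(\Sha(\pi_{\beta})) \to G''(\A)/\pi_{\beta}(\widetilde{G}(\A))G''(k) \to \Che^1(G')$,
which sandwiches the index against $\#\ker(\Sha(\pi_{\beta}))$ by the fixed finite groups $\Sha(G')$ and $\Che^1(G')$; then, in the case $G''(k)$ finite, rather than bounding $\#\Sha(\widetilde{G})$ outright, it shows every nonempty fiber of $\Sha(\pi_{\beta}):\Sha(\widetilde{G})\to\Sha(G'')$ has size $\approx\#\ker(\Sha(\pi_{\beta}))$, using the finiteness of $G''(k)$ to compare the kernels $\delta_{x'}(G''(k))$ of the fiber identifications. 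Your outline should be rewritten to make the cancellation explicit and to drop the false uniform bound on $|\delta_{\A}(G''(\A))|$.
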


\begin{proof}
Given functions $F, H$ from the set $Z^1(k, G/Z_G)$ of cocycles valued in $G/Z_G$ to the positive reals, let us write $F \approx H$ is there exist constants $c, d > 0$ such that $c\cdot H(\beta) < F(\beta) < d\cdot H(\beta)$ for all $\beta \in Z^1(k, G/Z_G)$. We have the exact sequence
\[
1 \longrightarrow G' \longrightarrow G \longrightarrow G'' \longrightarrow 1.
\]
Twisting by an element $\beta \in Z^1(k, G/Z_G)$, we obtain the sequence
\[
1 \longrightarrow G' \xlongrightarrow{j_{\beta}} G_{\beta} \xlongrightarrow{\pi_{\beta}} G'' \longrightarrow 1,
\]
where $G', G''$ are left unchanged, because $G'$ is central in $G$ and the quotient $G''$ is commutative. By Lemma \ref{tamsinsequences}, we have
\begin{equation}
\label{taueqn10}
\tau(G_{\beta}) \cdot \# \left( \frac{G''(\A)}{\pi_{\beta}(G_{\beta}(\A))G''(k)} \right) = \tau(G')\tau(G'')\cdot \# \ker(\Sha(j_{\beta})) (\# \coker(\widehat{j_{\beta}}))^{-1}, 
\end{equation}
where $\widehat{j_{\beta}}: \widehat{G_{\beta}}(k) \rightarrow \widehat{G'}(k)$ is the induced map on character groups. We claim that the right side above is $\approx 1$. Indeed, we first note that $\ker(\Sha(j_{\beta})) \subset \Sha(G')$, hence its cardinality is bounded. We also need to check that $\# \coker(\widehat{j_{\beta}}) \approx 1$. This holds because $\widehat{G_{\beta}}(k) = \widehat{G_{\beta}^{{\rm{ab}}}}(k)$, and inner twisting has no effect on the abelianization, so that in fact $\# \coker(\widehat{j})$ is invariant under inner twisting.

Since the right side of (\ref{taueqn10}) is $\approx 1$, we need to show that
\begin{equation}
\label{eqnapprox1}
\# \left( \frac{G''(\A)}{\pi_{\beta}(G_{\beta}(\A))G''(k)} \right) \stackrel{?}{\approx} \# \Sha(G_{\beta}).
\end{equation}
Let us endow $\ker(\Sha(\pi_{\beta}))$ with the structure of abelian group as follows. Any element of $\ker(\Sha(\pi_{\beta}))$ lifts to ${\rm{H}}^1(k, G')$. Since $G' \subset G_{\beta}$ is central, the abelian group ${\rm{H}}^1(k, G')$ acts on the set ${\rm{H}}^1(k, G)$; denoting this action by $*$, the map ${\rm{H}}^1(k, G') \rightarrow {\rm{H}}^1(k, G)$ is given by $\alpha \mapsto \alpha * 1$ \cite[Chap.\,I, \S 5.7]{serre}, and similar statements hold for adelic cohomology thanks to \cite[Prop.\,1.5]{rospred}. Therefore, we see that the map
\[
\frac{\{ x \in {\rm{H}}^1(k, G') \mid x_{\A} * 1 = 1 \in {\rm{H}}^1(\A, G)\}}{\{ x \in {\rm{H}}^1(k, G') \mid x * 1 = 1 \}} \rightarrow \ker(\Sha(\pi_{\beta}))
\]
is a bijection from the abelian group on the left to the set on the right. Thus, by transfer of structure, we obtain an abelian group structure on $\ker(\Sha(\pi_{\beta}))$.

We will now construct an exact sequence of finite abelian groups
\begin{equation}
\label{exactseq5}
\Sha(G') \longrightarrow \ker(\Sha(\pi_{\beta})) \xlongrightarrow{\psi} \frac{G''(\A)}{\pi_{\beta}(G_{\beta}(\A))G''(k)} \xlongrightarrow{\phi} \Che^1(G'),
\end{equation}
where we recall that $\Che^1(G') := \coker({\rm{H}}^1(k, G') \rightarrow {\rm{H}}^1(\A, G'))$ is finite by \cite[Chap.\,IV, \S 2.6, Prop.\,(b)]{oesterle}; note that this definition of $\Che^1(G')$ agrees with the definition $\Che^1(G') := \coker({\rm{H}}^1(k, G') \rightarrow \oplus_v {\rm{H}}^1(k_v, G'))$ given in \cite{oesterle} by \cite[Prop.\,1.5]{rospred}. The map $\Sha(G') \rightarrow \ker(\Sha(\pi_{\beta}))$ is the one induced by the map $G' \rightarrow G$. The map $\phi$ is the one induced by the connecting map $G''(\A) \rightarrow {\rm{H}}^1(\A, G')$, which is a homomorphism because $G' \subset G$ is central, by \cite[Ch.\,I, \S 5.6, Cor.\,2]{serre} and \cite[Prop.\,1.5]{rospred}. To define $\psi$, consider the following exact diagram of pointed sets
\[
\begin{tikzcd}
& G''(k) \arrow{r}{\delta_{\beta}} \arrow{d} & {\rm{H}}^1(k, G') \arrow{r}{j} \arrow{d} & {\rm{H}}^1(k, G_{\beta}) \arrow{r}{{\rm{H}}^1(\pi_{\beta})} \arrow{d} & {\rm{H}}^1(k, G'') \\
G_{\beta}(\A) \arrow{r}{\pi_{\beta}} & G''(\A) \arrow{r}{(\delta_{\beta})_{\A}} & {\rm{H}}^1(\A, G') \arrow{r} & {\rm{H}}^1(\A, G_{\beta}) &
\end{tikzcd}
\]
Given $\alpha \in \ker(\Sha(\pi_{\beta}))$, lift $\alpha$ to an element $w \in {\rm{H}}^1(k, G')$. Then the image $w_{\A}$ of $w$ in ${\rm{H}}^1(\A, G')$ maps to $0 \in {\rm{H}}^1(\A, G_{\beta})$, hence lifts to some element $v \in G''(\A)$. We then define $\psi(\alpha)$ to be the class of $v$ in $G''(\A)/\pi_{\beta}(G_{\beta}(\A))G''(k)$. One easily checks that this is well-defined, independent of the choice of lifts $w, v$. This uses the fact that two elements of ${\rm{H}}^1(k, G')$ have the same image in ${\rm{H}}^1(k, G_{\beta})$ if and only if they differ by an element of $\delta_{\beta}(G''(k))$ (since $G' \subset G_{\beta}$ is central \cite[Chap.\,I, \S 5.6, Cor.\,2]{serre}), and two elements of $G''(\A)$ have the same image under $(\delta_{\beta})_{\A}$ if and only if they differ by an element of $\pi_{\beta}(G_{\beta}(\A))$ due to \cite[Chap.\,I, \S 5.4, Cor.\,1]{serre} and \cite[Prop.\,1.5]{rospred}. 

We need to check that the maps in (\ref{exactseq5}) are group homomorphisms. The map $\Sha(G') \rightarrow \ker(\Sha(\pi_{\beta}))$ is by the definition of the group structure on $\ker(\Sha(\pi_{\beta}))$. To see that the maps $\psi, \phi$ are group homomorphisms, again using the definition of the group structure on $\ker(\Sha(\pi_{\beta}))$ in the case of $\psi$, it suffices to note that the connecting map $(\delta_{\beta})_{\A}$ is a group homomorphism, because $G' \subset G_{\beta}$ is central (\cite[Chap.\,I, \S 5.6, Cor.\,2]{serre} and \cite[Prop.\,1.5]{rospred}).

Now we check exactness of the sequence (\ref{exactseq5}). First, if $\alpha \in \ker(\Sha(\pi_{\beta}))$ lifts to $\Sha(G')$, then it is clear that $\psi(\alpha) = 0$, as we may then take $w \in \Sha(G')$, and $v = 0$ in the definition of $\psi(\alpha)$ above. Conversely, suppose that $\alpha \in \ker(\Sha(\pi_{\beta}))$ satisfies $\psi(\alpha) = 0$. In terms of the definition of $\psi$ given above, this means that the element $v \in G''(\A)$ lifting $w_{\A}$ lies in $\pi_{\beta}(G_{\beta}(\A))G''(k)$. Modifying $v$ by an element of $\pi_{\beta}(G_{\beta}(\A))$, therefore, as we may, we may assume that $v$ lifts to some element $v' \in G''(k)$. But then modifying the element $w \in {\rm{H}}^1(k, G')$ by $\delta_{\beta}(v')$ - again, as we may - we may assume that $w \in \Sha(G')$. That is, $\alpha$ lifts to $\Sha(G')$. This proves exactness at $\ker(\Sha(\pi_{\beta}))$.

Next we check exactness at $G''(\A)/\pi_{\beta}(G_{\beta}(\A))G''(k)$. First, it is clear from the definition that $\phi \circ \psi = 0$, since in the above notation, $\phi \circ \psi(\alpha)$ is the class of  $(\delta_{\beta})_{\A}(v) = w_{\A}$ in $\Che^1(G')$, which is $0$. Conversely, suppose that we have a class in $G''(\A)/\pi_{\beta}(G_{\beta}(\A))G''(k)$ represented by $v \in G''(\A)$ such that $\phi(v) = 0 \in \Che^1(G')$; that is, $(\delta_{\beta})_{\A}(v) = w_{\A}$ for some $w \in {\rm{H}}^1(k, G')$. Then by definition, the class of $v$ is $\psi(j(w))$ with $j(w) \in \ker(\Sha(\pi_{\beta}))$. So (\ref{exactseq5}) is exact.

The exactness of (\ref{exactseq5}) implies that $\# \ker(\psi)$ and $\# \coker(\psi)$ are both $\approx 1$, hence
\[
\# \left( \frac{G''(\A)}{\pi_{\beta}(G_{\beta}(\A))G''(k)}\right) \approx \# \ker(\Sha(\pi_{\beta})).
\]
Therefore, in order to prove (\ref{eqnapprox1}), and hence the lemma, it is the same to show that
\begin{equation}
\label{shaapproxkereqn}
\# \Sha(G_{\beta}) \approx \# \ker(\Sha(\pi_{\beta})).
\end{equation}
When $\Sha(G'') = 0$, we have $\Sha(G_{\beta}) = \ker(\Sha(\pi_{\beta}))$, so (\ref{shaapproxkereqn}) is immediate. So we now assume that $G''(k)$ is finite and prove (\ref{shaapproxkereqn}) in this case.

In order to do this, it suffices to show that the nonempty fibers of the map $\Sha(\pi_{\beta}): \Sha(G_{\beta}) \rightarrow \Sha(G'')$ all have size $\approx \# \ker(\Sha(\pi_{\beta}))$, i.e., all nonempty fibers have about the same size. (More precisely, they all have size bounded above and below by positive constants times the size of the fiber above the trivial element, where the constants depend only on $G$, not on $\beta$.)

For this, we note that for any $x \in \Sha(G_{\beta})$, the elements of $\Sha(G_{\beta})$ lying in the same fiber as $x$ are those of the form $\alpha * x$ where $\alpha \in {\rm{H}}^1(k, G')$ satisfies $\alpha_{\A} * 1 = 1 \in {\rm{H}}^1(\A, G_{\beta})$, since this is the same as $\alpha_{\A} * x_{\A} = 1$, because $x_{\A} = 1$ (as $x \in \Sha(G_{\beta})$). Two such elements $\alpha, \alpha'$ satisfy $\alpha * x = \alpha' * x$ if and only if $(\alpha - \alpha')* x = x$. Twisting the exact sequence
\[
1 \longrightarrow G' \longrightarrow G_{\beta} \longrightarrow G'' \longrightarrow 1
\]
by a cocycle $x'$ representing the cohomology class $x$ to obtain a new sequence
\begin{equation}
\label{twistseqx}
1 \longrightarrow G' \longrightarrow G_{x'} \longrightarrow G'' \longrightarrow 1,
\end{equation}
this amounts to saying $\alpha - \alpha' \mapsto 1 \in {\rm{H}}^1(k, G_{x'})$. But this in turn is equivalent to 
the condition $\alpha - \alpha' \in \delta_{x'}(G''(k))$, where $\delta_{x'}$ is the connecting map associated to the sequence (\ref{twistseqx}). Thus, we obtain a bijection between the fiber of the map $\Sha(\pi_{\beta})$ which contains $x$ and
\[
\frac{\{ \alpha \in {\rm{H}}^1(k, G') \mid \alpha_{\A} * 1 = 1 \in {\rm{H}}^1(\A, G_{\beta}) \}}{\delta_{x'}(G''(k))}.
\]
Since $G''(k)$ is finite, the sizes of these sets are all 
\[
\approx \# \{ \alpha \in {\rm{H}}^1(k, G') \mid \alpha_{\A} * 1 = 1 \in {\rm{H}}^1(\A, G_{\beta}) \},
\]
which is independent of the element $x \in \Sha(G_{\beta})$. That is, the nonempty fibers are all approximately the same size. The proof of the lemma is complete.
\end{proof}

In order to apply Proposition \ref{tautimesSha} to the extension
\[
1 \longrightarrow W_a \longrightarrow U_a \longrightarrow V_a \longrightarrow 1,
\]
defined by Gabber's unipotent groups constructed in \S \ref{gabbergps}, we will need the following lemma.

\begin{lemma}
\label{V(k)finite}
The group $V_a(k)$ is finite for any global function field $k$ and any $a \in k - k^p$.
\end{lemma}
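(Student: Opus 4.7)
The plan is to bound $V_a(k)$ by showing that any $(x, y) \in V_a(k)$ has $x, y \in \Gamma(X, \calO(D))$ for a fixed divisor $D$ on the smooth projective model $X$ of $k$, from which finiteness follows since $\Gamma(X, \calO(D))$ is a finite-dimensional $\F_q$-vector space and $\F_q$ is finite.

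First, I would verify that $V_a$ is $k_v$-wound for every place $v$, so that by \cite[Prop.\,B.3.4]{cgp} each $V_a(k_v)$ is compact. Indeed, \S\ref{gabbergps} shows that $V_a$ is $F$-wound over any overfield $F$ of $k$ precisely when the unique point at infinity of its projective closure is not $F$-rational, equivalently $a \notin F^{p^2}$. Since $[k : k^{p^2}] = p^2 = [k_v : k_v^{p^2}]$ and $k^{p^2} \subseteq k_v^{p^2}$ inside $k_v$, we get $k \cap k_v^{p^2} = k^{p^2}$; combined with $a \in k \setminus k^p \subseteq k \setminus k^{p^2}$ this yields $a \notin k_v^{p^2}$.

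Next I would identify a finite set $S$ of ``bad'' places outside of which every point of $V_a(k_v)$ has integral coordinates. Let $S$ consist of the zeros and poles of $a$ together with the zeros of the (nonzero, since $a \notin k^p$) differential $da \in \Omega^1_{k/\F_q}$. For $v \notin S$, suppose some $(x, y) \in V_a(k_v)$ has $\ord_v(y) = -m < 0$; then matching leading orders in $x - x^{p^2} = a y^{p^2}$ (using $\ord_v(a) = 0$) forces $\ord_v(x) = -m$ as well. Now compare coefficients of $\pi^{1-mp^2}$ in the equation, where $\pi$ is a uniformizer at $v$. In characteristic $p$, the expansions of $x^{p^2}$ and $y^{p^2}$ only contribute at orders divisible by $p^2$, so $x^{p^2}$ contributes nothing at the non-$p^2$-divisible order $1 - mp^2$; and $x$ contributes nothing either, since $1 - mp^2 < -m$ (as $m(p^2 - 1) \geq 3 > 1$). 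Thus the left side vanishes in this degree. On the right, the only pair $(i, j)$ with $i \geq 0$, $j \geq -m$, and $i + jp^2 = 1 - mp^2$ is $(i, j) = (1, -m)$, which contributes $a_1(v) \cdot d_{-m}^{p^2}$, where $a_1(v)$ denotes the $\pi$-coefficient of $a$ at $v$ and $d_{-m} \neq 0$ is the leading coefficient of $y$. Hence $a_1(v) = 0$, i.e., $v$ is a zero of $da$, contradicting $v \notin S$. The remaining case $\ord_v(x) < 0$, $\ord_v(y) \geq 0$ would force $v$ to be a pole of $a$, again contradicting $v \notin S$.

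To conclude, at each of the finitely many $v \in S$, compactness of $V_a(k_v)$ gives a finite $N_v \geq 0$ with $\ord_v(x), \ord_v(y) \geq -N_v$ for all $(x, y) \in V_a(k_v)$. Setting $D = \sum_{v \in S} N_v \cdot v$, any $(x, y) \in V_a(k)$ has $x, y \in \Gamma(X, \calO(D))$, and this space is a finite-dimensional $\F_q$-vector space, hence finite. Thus $V_a(k) \subseteq \Gamma(X, \calO(D))^2$ is finite. The main technical point is the valuation analysis at good places: the characteristic-$p$ fact that $x^{p^2}$ and $y^{p^2}$ have no contributions at non-$p^2$-divisible orders is precisely what forces the key coefficient constraint on $a$ to kick in.
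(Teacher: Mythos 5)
Your proof is correct, but it takes a genuinely different route from the paper's. The paper disposes of $p > 2$ by directly invoking \cite[Chap.\,VI, \S 3.1, Thm.]{oesterle}, and for $p = 2$ runs a differential argument: from $x + x^4 = ay^4$ one gets $dx = y^4\,da$, and the elementary inequality $\ord_v(dx) \geq \ord_v(x) - 1$ then yields an explicit \emph{lower bound} on $\ord_v(x)$ at every place, with equality to zero at all but finitely many places, from which the divisor bound and finiteness follow (and $y$ is then determined by $x$). Your argument instead treats all $p$ uniformly: at ``good'' places you do a hands-on power-series coefficient comparison at order $\pi^{1-mp^2}$ to force $a_1 = 0$, i.e.\ $\ord_v(da) > 0$ --- this is essentially an unwound version of the paper's differential inequality, since the obstruction you extract is exactly the leading coefficient of $da$; and at the finitely many ``bad'' places you appeal to compactness of $V_a(k_v)$ for the $k_v$-wound group $V_a$. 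What your approach buys is uniformity in $p$ and a proof that does not lean on Oesterl\'e's theorem; what the paper's buys (for $p = 2$) is explicit bounds at the bad places with no appeal to local compactness.

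One small wrinkle: your degree-counting justification that $k \cap k_v^{p^2} = k^{p^2}$ (to conclude $a \notin k_v^{p^2}$) does not quite follow as stated --- knowing $[k : k^{p^2}] = [k_v : k_v^{p^2}] = p^2$ with $k^{p^2} \subseteq k_v^{p^2}$ does not by itself pin down $[k : k \cap k_v^{p^2}]$. The cleaner argument is separability of $k_v/k$: if $a = b^{p^2}$ with $b \in k_v$, then $k(b)/k$ is purely inseparable yet lands in the separable extension $k_v/k$, forcing $b \in k$. Cleaner still, you can skip the purity computation altogether by citing the fact (recorded in \S\ref{gabbergps}, via \cite[Prop.\,B.3.2]{cgp}) that woundness is insensitive to the separable extension $k_v/k$, so $V_a$ is automatically $k_v$-wound.
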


\begin{proof}
If $p >2$, then this is a special case of \cite[Chap.\,VI, \S 3.1, Thm.]{oesterle}. When $p = 2$ we need a different argument that is in the same spirit as the proof of Lemma \ref{W(k)}. Let $(x, y) \in V_a(k)$, so
\begin{equation}
\label{anothereqn1}
x + x^4 = ay^4.
\end{equation}
Suppose we are given a place $v$ of $k$ such that ${\rm{ord}}_v(x) < 0$ (so $x \notin \F_q$ and hence $y \neq 0$). Then ${\rm{ord}}_v(a) + {\rm{ord}}_v(y^4) = {\rm{ord}}_v(ay^4) = {\rm{ord}}_v(x+x^4) = {\rm{ord}}_v(x^4) = 4\cdot {\rm{ord}}_v(x)$. That is,
\begin{equation}
\label{anothereqn2}
{\rm{ord}}_v(y^4) = 4\cdot {\rm{ord}}_v(x) - {\rm{ord}}_v(a)
\end{equation}

On the other hand, taking differentials of (\ref{anothereqn1}) yields $dx = y^4da$, and $da \neq 0 \in \Omega^1_{k_v}$ since $a \notin k_v - k_v^p$ (because $k_v/k$ is a separable extension). Therefore, since ${\rm{ord}}_v(dx) \geq {\rm{ord}}_v(x) - 1$, using (\ref{anothereqn2}) we obtain
\[
{\rm{ord}}_v(da) - {\rm{ord}}_v(a) + 4\cdot {\rm{ord}}_v(x) \geq {\rm{ord}}_v(x) - 1
\]
when ord$_v(x) < 0$. This yields for all $v$ a lower bound on ${\rm{ord}}_v(x)$. Further, for all but finitely many $v$, ${\rm{ord}}_v(a) = {\rm{ord}}_v(da) = 0$, so we actually obtain ${\rm{ord}}_v(x) \geq 0$. Thus, we obtain a divisor $D$ on the curve $X$ of which $k$ is the function field such that ${\rm{div}}(x) \geq D$ for all $(x, y) \in V_a(k)$ with $x \neq 0$. Therefore, there are only finitely many possible values of $x$, hence only finitely many $k$-points of $V_a$.
\end{proof}

Lemmas \ref{tautimesSha} and \ref{V(k)finite} show that for an inner form of $U_a$, having small Tamagawa number is equivalent to having large Tate-Shafarevich set. Thus, in order to prove Theorem \ref{innerinvariancefailsforU} for the group $U_a$, it suffices to find an inner form having one of these properties, and the other follows automatically. We will force $\tau$ to be small directly, thereby also obtaining largeness of $\Sha$. In order to do this, it is essential that we be able to compute the connecting map $V_a(k) \rightarrow {\rm{H}}^1(k, W_a)$ arising from twists of the exact sequence
\[
1 \longrightarrow W_a \longrightarrow U_a \longrightarrow V_a \longrightarrow 1.
\]
Since it is no more difficult, we will do this in greater generality than for the groups $U_a$. We will carry out this computation in the next section, but we first discuss the general situation here in order to motivate the work carried out in \S \ref{sectioninnertwistpathologies}. This discussion is purely for pedagogical purposes, and will not be used in any way in \S \ref{sectioninnertwistpathologies}.

So generalizing for now, let us temporarily assume that we have a central extension of unipotent groups over a field $k$
\begin{equation}
\label{exactseq7}
1 \longrightarrow W \xlongrightarrow{t} U \xlongrightarrow{\xi} V \longrightarrow 1
\end{equation}
with $W, V$ smooth connected $1$-dimensional unipotent $k$-groups. (The assumption on dimension is not so important; we make it mainly for simplicity of exposition and because this assumption will hold for the groups $W_a, V_a$ to which we will apply this discussion.) In particular, $W$ and $V$ are $p$-torsion; indeed, $[p]: W \rightarrow W$ is not surjective, so the image is smooth connected of dimension $< 1$, i.e., it is trivial, and similarly for $V$. (In fact, one can show that any one-dimensional smooth connected unipotent group over a field is a form of $\Ga$.) For any such groups, by \cite[Prop.\,B.1.13]{cgp} there exist exact sequences
\begin{equation}
\label{exactseq8}
0 \longrightarrow W \xlongrightarrow{l} \Ga^2 \xlongrightarrow{g} \Ga \longrightarrow 0,
\end{equation}
\[
0 \longrightarrow V \longrightarrow \Ga^2 \xlongrightarrow{f} \Ga \longrightarrow 0.
\]
In this way, we obtain identifications ${\rm{H}}^1(k, W) \simeq k/g(k^2)$ and ${\rm{H}}^1(k, V) \simeq k/f(k^2)$. 

\begin{remark}
\label{twistingcocycle}
Suppose given a class $\alpha \in {\rm{H}}^1(k, U)$. Then we may twist the sequence (\ref{exactseq7}) by $\alpha$ to obtain a new sequence. Actually, strictly speaking we must twist by a cocycle representing $\alpha$. The isomorphism class of this twist (as an extension of $V$ by $W$) is independent of the choice of cocycle representing $\alpha$, up to non-canonical isomorphism. Since this isomorphism class is all that matters for our purposes, we can abuse notation and speak of ``twisting by $\alpha$''. Further, since $W \subset U$ is central, twisting by $\alpha$ is the same as twisting by the image $\overline{\alpha}$ of $\alpha$ in ${\rm{H}}^1(k, V) \simeq k/f(k^2)$; again, strictly speaking we are twisting by a cocycle representing this image, but this twist, too, is independent of the cocycle representing this image. All of these assertions follow from the fact that the action of $U$ on the extension class of $U$ as an extension of $V$ by $W$ factors as the composition $U \rightarrow V \rightarrow \underline{{\rm{Aut}}}_{(U, V, W)/k}$ (where this last symbol denotes the automorphism functor of the extension $U$ of $V$ by $W$), and the fact that fppf forms of this extension class are classified by ${\rm{H}}^1(k, \underline{{\rm{Aut}}}_{(U, V, W)/k})$.
\end{remark}

Represent the image of $\alpha$ in ${\rm{H}}^1(k, V) \simeq k/f(k^2)$ by some $\beta \in k$, so that we obtain a twisted sequence
\begin{equation}
\label{twist2}
1 \longrightarrow W \longrightarrow U_{\beta} \longrightarrow V \longrightarrow 1.
\end{equation}
We want to compute the connecting map $\delta_{\beta}: V(k) \rightarrow {\rm{H}}^1(k, W)$. 

Consider the following pushout diagram with exact rows and columns.
\begin{equation}
\label{adiagram13}
\begin{tikzcd}
1 \arrow{r} & W \arrow{r}{t} \arrow[d, hookrightarrow, "l"] & U \arrow{r}{\xi} \arrow[d, hookrightarrow, "j"] & V \arrow{r} \arrow[d, equals] & 1 \\
1 \arrow{r} & \Ga^2 \arrow{r}{i} \arrow{d}{g} & M \arrow{r} \arrow{d}{\pi} & V \arrow{r} & 1 \\
& \Ga \arrow[r, equals] & \Ga &&
\end{tikzcd}
\end{equation}
Now $M$ is in particular a $\Ga^2$-torsor over $V$, and ${\rm{H}}^1(V, \Ga^2) = 0$ because $V$ is affine, so the map $M \rightarrow V$ admits a scheme-theoretic (but not necessarily group-theoretic) section $V \rightarrow M$. By translating this section by some element of $\Ga^2(k) = k^2$, we may assume that it carries the identity of $V$ to the identity of $M$. It follows that the group structure on $M$ is given by some Hochschild $2$-cocycle $h: V \times V \rightarrow \Ga^2$. (See \cite[Chap.\,II, \S 3.2]{demazure} for details.) We will not need to worry about this generality. Let us make the simplifying assumption that our $2$-cocycle is bi-additive. We now explain what we mean by all of this.

We may identify $M = \Ga^2 \times V$ as $\Ga^2$-torsors over $V$, with $0 \in M(k)$ mapping to $((0, 0), 0) \in (\Ga^2 \times V)(k)$, so $i$ is identified with the canonical inclusion $\Ga^2 \simeq \Ga^2 \times \{ 0_V\} \hookrightarrow \Ga^2 \times V$. Suppose that there is a bi-additive map $h: V \times V \rightarrow \Ga^2$ such that the composition law on $M$ is given by
\[
(x, v)\cdot(x', v') = (x + x' + h(v, v'), v + v').
\]
Any bi-additive $h$ as above defines a group law on $\Ga^2 \times V$ in this manner with identity $((0, 0), 0)$ and inverse $(x, v)^{-1} = (-x - h(v, -v), -v)$. We {\em assume} that the group structure on $M$ arises in this manner. Then for $\beta \in k$ representing a class in $k/f(k^2) \simeq {\rm{H}}^1(k, V)$, we would like to compute the connecting map $\delta_{\beta}: V(k) \rightarrow {\rm{H}}^1(k, W)$ associated to the twisted sequence (\ref{twist2}). This is accomplished by a lemma that involves a couple of additional assumptions. We discuss this in the next section.

\section{Pathologies with unipotent groups: inner twisting}
\label{sectioninnertwistpathologies}

In this section we will prove Theorem \ref{innerinvariancefailsforU} and thereby 
show that Theorem \ref{innerinvariance} fails dramatically beyond the commutative and pseudo-reductive cases by giving examples of wound non-commutative $2$-dimensional unipotent groups having inner forms with arbitrarily small Tamagawa number and arbitrarily large Tate-Shafarevich set. We will actually show that {\em all} of Gabber's groups $U_a$ constructed in \S \ref{gabbergps} have this property:

\begin{theorem}
\label{innerinvariancefails}
Let $k$ be a global function field, $a \in k - k^p$. Choose $\epsilon, M > 0$. Then the group $U_a$ has an inner form $U'$ such that $\tau(U') < \epsilon$ and $\# \Sha(U') > M$.
\end{theorem}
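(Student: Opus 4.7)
The plan is to first reduce the theorem to the assertion that $\tau(U')$ can be made arbitrarily small as $U'$ ranges over inner forms of $U_a$. Indeed, since $V_a(k)$ is finite by Lemma \ref{V(k)finite}, Proposition \ref{tautimesSha} applied to the central extension $1 \to W_a \to U_a \to V_a \to 1$ yields constants $c,d>0$ with $c < \tau(U')\cdot\#\Sha(U') < d$ for every inner form $U'$. Once $\tau(U')<\min(\epsilon, d/M)$, automatically $\#\Sha(U') > M$, completing the proof. So the whole task becomes: produce inner forms with Tamagawa number below any prescribed bound.

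To produce such forms, I would parameterize inner twists via $H^1(k,V_a)$. As explained in Remark \ref{twistingcocycle}, since $W_a$ is central in $U_a$, twisting by a class in $H^1(k,U_a)$ depends only on its image in $H^1(k,V_a)\cong k/f(k^2)$, where $f(x,y) = x - x^{p^2} - ay^{p^2}$. For $\beta \in k$ representing such a class, denote the twisted extension by
\[
1 \longrightarrow W_a \longrightarrow U_\beta \longrightarrow V_a \longrightarrow 1.
\]
Applying Lemma \ref{tamsinsequences} to this sequence, and noting that $\widehat{W_a}(k) = \widehat{V_a}(k) = \widehat{U_\beta}(k) = 0$ (as all three groups are unipotent) and that $\#\ker(\Sha(j_\beta)) \le \#\Sha(W_a)$ is bounded independently of $\beta$, the identity becomes
\[
\tau(U_\beta)\cdot \#\!\left(\frac{V_a(\A)}{\pi_\beta(U_\beta(\A))\, V_a(k)}\right) \;\asymp\; \tau(W_a)\tau(V_a),
\]
with the right side a fixed positive constant. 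Thus it suffices to exhibit $\beta$'s making the adelic cokernel $V_a(\A)/\pi_\beta(U_\beta(\A))\, V_a(k)$ arbitrarily large. Since $V_a(k)$ is finite, it suffices in turn to make $V_a(\A)/\pi_\beta(U_\beta(\A))$ arbitrarily large.

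The obstruction to lifting a local point $v_w \in V_a(k_w)$ to $U_\beta(k_w)$ is exactly the image of $v_w$ under the local connecting map $\delta_{\beta,w}: V_a(k_w) \to H^1(k_w, W_a)$ of the twisted sequence, by \cite[Chap.\,I, \S5.4, Cor.\,1]{serre}. Hence I would (i) obtain an explicit formula for $\delta_\beta$ in terms of the bi-additive cocycle $h$ (resp.~$h_\zeta$) defining $U_a$, using the pushout diagram (\ref{adiagram13}) and the presentations $H^1(k,W_a)\cong k/g(k^2)$, $H^1(k,V_a)\cong k/f(k^2)$, and (ii) choose $\beta$ so that for many local points of $V_a$ the resulting local obstructions land in an infinite family of pairwise-distinct classes of $H^1(k_w, W_a)$, thereby shrinking $\pi_\beta(U_\beta(\A))$ inside $V_a(\A)$ by arbitrarily large finite factors. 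Concretely, one expects $\delta_\beta(v)$ to be (roughly) the class of $h(\beta\text{-representative},v)$ in $k/g(k^2)$, so producing $\beta$ with sufficiently bad local behavior at many well-chosen places will obstruct liftability at those places.

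The main obstacle, and the content the author defers to \S\ref{sectioninnertwistpathologies}, is step (i): the explicit computation of $\delta_\beta$ on $V_a(k_w)$ in terms of $h$ and a chosen representative of $\beta$. This requires tracking a cocycle through the pushout diagram, expressing the extension $M$ as $\Ga^2 \times V_a$ with a bi-additive $2$-cocycle (which is where the hypothesis that $U_a$ comes from an explicit bi-additive $h$ is essential), and then twisting this $2$-cocycle description by $\beta$ to read off the connecting map. Once the formula is in hand, step (ii) — choosing $\beta$ with enough ramification at sufficiently many places of $k$ so that $\delta_{\beta,w}$ is non-trivial on a controlled chunk of $V_a(k_w)$ — is a direct manipulation with the defining equations of $V_a$ and $W_a$, and yields $\tau(U_\beta) < \epsilon$ for appropriate $\beta$, completing the proof.
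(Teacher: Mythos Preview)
Your approach is essentially the same as the paper's: reduce via Proposition~\ref{tautimesSha} and Lemma~\ref{V(k)finite} to making $\tau(U_\beta)$ small, apply Oesterl\'e's formula to convert this into making the adelic cokernel $V_a(\A)/\pi_\beta(U_\beta(\A))V_a(k)$ large, and then use an explicit formula for the twisted connecting map $\delta_\beta$ to choose $\beta$ obstructing local liftability.

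One clarification on your step (ii), where your language is slightly off. The paper does not use ``ramification'' of $\beta$, nor does it need an infinite family of pairwise-distinct obstruction classes at a fixed place. Rather, for any finite set $S$ of places one picks at each $v\in S$ a \emph{single} point $(c_v,d_v)\in V_a(k_v)$ with $c_v\neq 0$. The explicit formula (Lemma~\ref{connectingmap_a}) gives $\delta_\beta(c_v,d_v)$ as the class of $2c_v^p\beta$ (for $p>2$) or $c_v^2\beta$ (for $p=2$, $\F_4\subset k_v$), etc., in $k_v/g(k_v^2)$; this is nonzero precisely when $\beta$ avoids a certain translate of the proper open (hence closed) subgroup $g(k_v^2)\subsetneq k_v$. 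Weak approximation then produces a global $\beta\in k$ avoiding all these cosets simultaneously, so $U_\beta(k_v)\to V_a(k_v)$ is non-surjective for every $v\in S$. Taking $|S|$ large makes $\prod_{v\in S} V_a(k_v)/\pi_\beta(U_\beta(k_v))$, and hence the adelic cokernel, arbitrarily large. Note also that your rough guess $\delta_\beta(v)\approx h(\text{lift of }\beta,v)$ is not quite right: the formula in Proposition~\ref{connectingmap} involves the \emph{antisymmetrization} $g(h(\overrightarrow{X},v))-g(h(v,\overrightarrow{X}))$, plus a correction term $g(n(v))$ needed when $p=2$ and $\F_4\not\subset k$.
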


The proof will occupy this entire section. The reader who is willing to ignore fields $k$ of characteristic $2$ such that $\F_4 \not \subset k$ to avoid complications that arise in the proof of Theorem \ref{innerinvariancefails} over such fields should see Remark \ref{F_4notink}.

\begin{remark}[Questions]
Suppose that $U$ is a wound non-commutative unipotent group over a global function field $k$. Does Theorem \ref{innerinvariancefails} hold for $U$? Or does a weaker version at least hold, in which one may find inner forms of $U$ with arbitrarily small Tamagawa number, and inner forms with arbitrarily large $\Sha$, but possibly not at the same time? Is it true that $\tau(U') \cdot \# \Sha(U')$ is bounded both above and below as $U'$ varies over all inner forms $U'$ of $U$? (See Proposition \ref{tautimesSha}.)
\end{remark}

The key point is to compute connecting maps of sequences obtained by inner twisting from the exact sequences
\[
0 \longrightarrow W_a \longrightarrow U_a \longrightarrow V_a \longrightarrow 0
\]
for the groups $U_a$ constructed by Gabber. We will do this in greater generality than we require and so we first begin with a more general setup.

\underline{\textbf{Setup}}: For a field $k$, let $V, W$ be finite type $k$-group schemes, with $V$ smooth, arising as kernels in short exact sequences of finite type commutative $k$-groups
\[
0 \longrightarrow V \longrightarrow G_1 \xlongrightarrow{f} \overline{G}_1 \longrightarrow 0
\]
\[
0 \longrightarrow W \xlongrightarrow{l} G_2 \xlongrightarrow{g} \overline{G}_2 \longrightarrow 0
\]
Let $U$ be a central extension of $V$ by $W$. Suppose given a finite extension $k'/k$ such that we have an isomorphism $W_{k'} \times V_{k'} \xrightarrow{\phi'} U_{k'}$ of $W_{k'}$-torsors over $V_{k'}$.

Consider the following pushout diagram (the analogue of \ref{adiagram13}):
\begin{equation}
\label{adiagram213}
\begin{tikzcd}
1 \arrow{r} & W \arrow{r}{t} \arrow[d, hookrightarrow, "l"] & U \arrow{r}{\xi} \arrow[d, hookrightarrow, "j"] & V \arrow{r} \arrow[d, equals] & 1 \\
1 \arrow{r} & G_2 \arrow{r}{i} \arrow{d}{g} & M \arrow{r} \arrow{d}{\pi} & V \arrow{r} & 1 \\
& \overline{G}_2 \arrow[r, equals] & \overline{G}_2 &&
\end{tikzcd}
\end{equation}
and suppose also given an isomorphism $G_2 \times V \xrightarrow{\phi} M$ of $G_2$-torsors over $V$. Let $n: V_{k'} \rightarrow (\Ga^2)_{k'}$ denote the ``difference'' between the two isomorphisms $M_{k'} \simeq (G_2)_{k'} \times V_{k'}$ of $G_2$-torsors over $V_{k'}$ given by $\phi$ and the pushout $\eta'$ of $\phi'$ along the inclusion $l_{k'}: W_{k'} \hookrightarrow (G_2)_{k'}$. That is, $n = s - s'$ (difference defined via the $G_2$-action on $M$), where $s$ is given by the composition
\[
V_{k'} \xrightarrow{(0, {\rm{id}})} (G_2)_{k'} \times V_{k'} \xrightarrow{\phi_{k'}} M_{k'}
\]
and the map $s'$ is given by the same formula but with $\phi_{k'}$ replaced by the pushout $\eta'$ of $\phi'$.

\begin{proposition}
\label{connectingmap}
Suppose that the group law above on $M$ $($via the isomorphism $\phi$$)$ is given by
\begin{equation}
\label{grouplaw}
(\alpha_1, v_1) \cdot (\alpha_2, v_2) = (\alpha_1 + \alpha_2 + h(v_1, v_2), v_1v_2)
\end{equation}
for a bi-additive $h: V \times V \rightarrow G_2$. Further assume that, via the inclusion $V = \ker(f) \hookrightarrow G_1$, $h$ extends to a bi-additive map $G_1 \times G_1 \rightarrow G_2$, still denoted by $h$.

Let $\beta \in \overline{G}_1(k)$, and consider the exact sequence
\begin{equation}
\label{twist12}
1 \longrightarrow W \longrightarrow U_{\beta} \longrightarrow V \longrightarrow 1
\end{equation}
obtained by twisting the central extension $U$ of $V$ by $W$ by the cohomology class $\delta(\beta) \in {\rm{H}}^1(k, V)$, where $\delta \colon \overline{G}_1(k) \rightarrow {\rm{H}}^1(k, V)$ is the connecting map. Choose $\overrightarrow{X} \in G_1(k_s)$ such that $f(\overrightarrow{X}) = \beta$ $($possible because $V$ is smooth$)$. Let $S$ be a $k$-scheme, and let $v \in V(S)$. Then 
\[
g(h(\overrightarrow{X}, v)) - g(h(v, \overrightarrow{X})) + g(n(v)) \in \overline{G}_2(S),
\] 
and its image in $\overline{G}_2(S)/g(G_2(S)) \hookrightarrow {\rm{H}}^1(S, W)$ equals $\delta_{\beta}(v)$, where 
 $\delta_{\beta}: V(S) \rightarrow {\rm{H}}^1(S, W)$ is the connecting map associated to the twisted sequence $(\ref{twist12})$.
\end{proposition}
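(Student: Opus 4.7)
The plan is to push the twisted extension $(\ref{twist12})$ forward along $l \colon W \hookrightarrow G_2$ to the sequence $1 \to G_2 \to M_\beta \to V \to 1$, reducing to a cocycle computation in $M$ (where the group law is explicit in terms of the bi-additive $h$), and then to invert the boundary $\partial \colon \overline{G}_2(S)/g(G_2(S)) \hookrightarrow {\rm{H}}^1(S, W)$ coming from $0 \to W \to G_2 \to \overline{G}_2 \to 0$. By naturality of connecting maps applied to the twist of the pushout diagram $(\ref{adiagram213})$, one has $l_\ast \circ \delta_\beta = \delta^M_\beta$, where $\delta^M_\beta$ is the connecting map for the pushed-out sequence. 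The long exact sequence attached to $0 \to W \to G_2 \to \overline{G}_2 \to 0$ identifies $\ker(l_\ast) \subset {\rm{H}}^1(S, W)$ with the image of $\partial$, so it suffices to show $\delta^M_\beta(v) = 0$ via an explicit trivializing cochain $Y$ in $G_2$ over some cover, and then to observe that $g(Y)$ descends to $\overline{G}_2(S)$ and realizes $\delta_\beta(v)$ under $\partial$.

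For the $M$-computation, centrality of $G_2$ in $M$ together with bi-additivity of $h$, combined with the inverse formula $-(\alpha, w) = (-\alpha + h(w, w), -w)$ in $\phi$-coordinates, yields that the $V$-conjugation action on $M$ (which defines the twist by $\delta(\beta)$) sends $(\alpha, w)$ to $(\alpha + h(c, w) - h(w, c), w)$ for $c \in V$. The class $\delta(\beta)$ is represented by the cocycle $c_\sigma := \sigma(\overrightarrow{X}) - \overrightarrow{X} \in V(k_s)$, and the natural lift $u := \phi'(0, v)$ of $v$ to $U$ over $k'$ pushes forward via $j$ to the point with $\phi$-coordinates given by the section $s'(v)$, which differs from $s(v) = (0, v)$ by $n(v)$ in the $G_2$-action by definition of $n$. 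Direct evaluation of $\sigma \mapsto j(u)^{-1} \cdot (\sigma \ast_\beta j(u))$ in $M$, using the identity $h(c_\sigma, v) = \sigma(h(\overrightarrow{X}, v)) - h(\overrightarrow{X}, v)$ (valid because $h$ is $k$-defined and extends bi-additively to $G_1 \times G_1$ by hypothesis, while $v$ descends to $S$), then yields a $G_2$-cocycle of the form $\sigma \mapsto \sigma(Y) - Y$ with $Y = h(\overrightarrow{X}, v) - h(v, \overrightarrow{X}) + n(v)$ (up to the sign convention chosen for $n$). Since this $G_2$-cocycle is the push-forward via $l$ of the $W$-valued cocycle representing $\delta_\beta(v)$, it actually takes values in $W = \ker(g)$, so applying $g$ gives $\sigma(g(Y)) = g(Y)$; hence $g(Y) \in \overline{G}_2(S)$, and $\partial(g(Y)) = \delta_\beta(v)$ by the definition of $\partial$, which is the asserted formula.

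The main obstacle will be the cocycle bookkeeping in the non-abelian group $M$: because the inverse formula carries an $h(w, w)$ term, several intermediate $h$-values must cancel during the computation, and one has to carefully track how the distinction between the lifts $s$ and $s'$ of $v$ to $M$ produces precisely the $g(n(v))$ contribution to the final formula. A secondary technicality is the transition from Galois cocycles over $k_s$ to \v{C}ech cocycles along a suitable fppf cover of $S$ (required since the trivialization $\phi'$ lives over the possibly inseparable extension $k'/k$), but the purely algebraic identities relating $h$, $n$, and $\overrightarrow{X}$ descend to that setting without modification.
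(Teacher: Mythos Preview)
Your approach is correct and coincides with the paper's at the level of the underlying algebra: both compute the conjugation action of $V$ on $M$ via the bi-additivity of $h$, obtain the formula $(x,w)\mapsto (x+h(c,w)-h(w,c),w)$, and then use the extension of $h$ to $G_1\times G_1$ to rewrite $h(c_\sigma,v)$ as $\sigma(h(\overrightarrow{X},v))-h(\overrightarrow{X},v)$, which is exactly what forces the resulting $G_2$-cocycle to be the coboundary of $Y$.

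The only organizational difference is that the paper packages the coboundary identification as the construction of an explicit Galois-equivariant section $\lambda\colon V\to M_\beta$, namely $v\mapsto \phi(h(v,\overrightarrow{X})-h(\overrightarrow{X},v),v)$, and then reads off $\delta_\beta(v)$ as $-\pi_\beta(\lambda(v))$ after proving directly (via a torsor isomorphism $g^{-1}(-\pi_\beta(m))\simeq\xi_\beta^{-1}(v)$) that this recipe computes the connecting map. Your version instead lifts $v$ via $\phi'$ over $k'$, computes the $W$-valued cocycle by hand in $M$, and recognizes its $l$-pushforward as $\sigma(Y)-Y$. The paper's packaging has the advantage that once $\lambda$ is shown to be Galois-equivariant it is automatically defined over $k$, so one never has to mix the Galois cover (needed for $\overrightarrow{X}$) with the possibly inseparable cover $k'/k$ (needed for $\phi'$); your final paragraph correctly flags this as a bookkeeping issue to be handled via \v{C}ech cocycles, and it is harmless, but the paper's route sidesteps it entirely. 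The computation of $\pi_\beta$ in $\eta'$-coordinates in the paper is exactly your observation that the $\phi'$-lift $j(u)$ has $\phi$-coordinates differing from $(0,v)$ by $n(v)$, so the $g(n(v))$ term arises for the same reason in both arguments (and your parenthetical caution about the sign of $n$ is warranted---tracking it carefully gives the stated formula).
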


\begin{remark}
\label{oddassumptions}
We will apply Proposition \ref{connectingmap} to the groups $U_a, V_a, W_a$ constructed at the beginning of \S \ref{sectionpathologies1}, with $G_1 = G_2 = \Ga^2$ and $S = {\rm{Spec}}(k)$. Note that if $U = W \times V$ as $V$-schemes with group structure given by a bi-additive map $h: V \times V \rightarrow W$ that extends to a bi-additive map $h: \Ga^2 \times \Ga^2 \rightarrow \Ga^2$, then we may take this $h$ to be the $h$ in Proposition \ref{connectingmap}, $k' = k$, $\phi$ to be the pushout of $\phi'$, and $n = 0$. This holds in particular for the groups $U_a, V_a, W_a$ except when char$(k) = 2$ and $\F_4 \not \subset k$.
\end{remark}

\begin{proof}
Consider the pushout diagram (\ref{adiagram213}) with exact rows and columns. The group $V$ acts by ``conjugation'' on the whole diagram, so (strictly speaking, making use of the analogue of Remark \ref{twistingcocycle} to avoid ambiguities with a cocycle representing the class of $\beta$ in $\overline{G}_1(k)/f(G_1(k)) \hookrightarrow {\rm{H}}^1(k, V)$) we may twist by the class of $\beta \in \overline{G}_1(k)$ in $\overline{G}_1(k)/f(G_1(k)) \hookrightarrow {\rm{H}}^1(k, V)$ to obtain a twisted diagram
\begin{equation}
\label{diagram5}
\begin{tikzcd}
1 \arrow{r} & W \arrow{r} \arrow[d, hookrightarrow, "l"] & U_{\beta} \arrow{r}{\xi_{\beta}} \arrow[d, hookrightarrow] & V \arrow[d, equals] \arrow{r} & 1 \\
1 \arrow{r} & G_2 \arrow{r}{i_{\beta}} \arrow{d}{g} & M_{\beta} \arrow{r} \arrow{d}{\pi_{\beta}} & V \arrow{r} & 1 \\
& \overline{G}_2 \arrow[r, equals] & \overline{G}_2 &&
\end{tikzcd}
\end{equation}

We will see later (Lemma \ref{Galeqvtsection}) that there is a $k$-scheme-theoretic (which is not necessarily a homomorphism) section $V \rightarrow M_{\beta}$ to the map $M_{\beta} \rightarrow V$. In particular, one may lift elements of $V(S)$ to $M_{\beta}(S)$. We claim that for $v \in V(S)$, $\delta_{\beta}(v) \in \overline{G}_2(S)/g(G_2(S)) \hookrightarrow {\rm{H}}^1(S, W)$ is computed as follows. Lift $v$ to an element $m \in M_{\beta}(S)$. Then the class of $-\pi_{\beta}(m) \in \overline{G}_2(S)$ in $\overline{G}_2(S)/g(G_2(S)) \hookrightarrow {\rm{H}}^1(S, W)$ is $\delta_{\beta}(v)$. Indeed, the connecting map $V(S) \rightarrow {\rm{H}}^1(S, W)$ is defined by sending $v$ to the fiber $\xi_{\beta}^{-1}(v)$ above $v$ with its natural $W$-action, and similarly for the connecting map $\overline{G}_2(S) \rightarrow {\rm{H}}^1(S, W)$ in the first vertical sequence above. Thus, we need to construct an isomorphism $\epsilon: g^{-1}(-\pi_{\beta}(m)) \xrightarrow{\sim} \xi_{\beta}^{-1}(v)$ of $W$-torsors over $S$.

Given $x \in g^{-1}(-\pi_{\beta}(m))$ (more precisely, $x$ is an $R$-valued point of this fiber for some $S$-scheme $R$), consider $i_{\beta}(x)\cdot m \in M_{\beta}$. This maps to $0 \in \overline{G}_2$ under $\pi_{\beta}$, hence lies in $U_{\beta}$. Since $i_{\beta}(x) \mapsto 0 \in V$, and $m \mapsto v$, it follows that $u := i_{\beta}(x)\cdot m \mapsto v$, hence $u \in \xi_{\beta}^{-1}(v)$. This defines our map, and it is a straightforward diagram chase to see that this is a morphism of $W$-torsors and hence an isomorphism. So the main difficulty in computing $\delta_{\beta}$ explicitly is to compute a lift $m$ of $v$.

A $1$-cocycle valued in $V$ corresponding to $\beta$ is computed as follows. Let $\overrightarrow{X} \in G_1(k_s)$ be as in the statement of the proposition, so $f(\overrightarrow{X}) = \beta$. Then the image of $\beta$ in ${\rm{H}}^1(k, V)$ is represented by the Galois $1$-cocycle $\zeta\colon \sigma \mapsto {}^{\sigma}\overrightarrow{X} - \overrightarrow{X}$. The group $M_{\beta}$ is defined by Galois descent by taking $M_{k_s}$ and twisting the Galois action by this cocycle. More precisely, the new action is given by 
\begin{equation}
\label{newaction}
{}^{\sigma'}m = \zeta_{\sigma}\cdot {}^{\sigma}m \cdot \zeta_{\sigma}^{-1}
\end{equation}
for $m \in M(k_s)$, where the above (slightly abusive) notation means conjugation by $\zeta_{\sigma}$, which makes sense due to the centrality of $G_2$ in $M$. All of the maps in diagram (\ref{diagram5}) are the same as those in diagram (\ref{adiagram213}) over $k_s$, and are equivariant with respect to this new action, so give maps over $k$. 

The problem is that the obvious section $V_{k_s} \rightarrow (G_2)_{k_s} \times V_{k_s} \xrightarrow{\phi_{k_s}} M_{k_s}$ which is $0$ on the $G_2$-component is usually not Galois-equivariant with respect to this new action. So we need to construct a section that is Galois-equivariant. This is accomplished by the following lemma.

\begin{lemma}
\label{Galeqvtsection}
The map
\begin{equation}
\label{eqvtsection}
v \overset{\lambda}{\mapsto} \phi \left( h(v, \overrightarrow{X}) - h(\overrightarrow{X}, v), v\right) \in M_{k_s}
\end{equation}
is a Galois-equivariant section $($not necessarily a homomorphism$)$ to the map $M_{\beta} \rightarrow V$.
\end{lemma}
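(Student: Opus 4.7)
The plan is to verify both defining properties of a section directly from the explicit formula. That $\lambda$ is a scheme-theoretic section is immediate: in $\phi$-coordinates, $\lambda(v)$ has second component $v$, so $\pi \circ \lambda = \mathrm{id}_V$. Moreover, the map $\pi \colon M \to V$ remains equivariant once $M$ is endowed with the twisted Galois action of (\ref{newaction}), because the twist is by (a lift of) an element of $V(k_s)$ and $V$ is commutative. So the only real content is the claim that $\lambda$ descends to a map $V \to M_\beta$, i.e.\ that ${}^{\sigma'}\lambda(v) = \lambda({}^{\sigma}v)$ for all $\sigma$ and all $v \in V(k_s)$.

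For this, fix $\sigma$, set $Y := {}^{\sigma}\vec{X}$ and $w := {}^{\sigma}v$, so that $\zeta_\sigma = Y - \vec{X} \in V(k_s)$. Since $\phi$ and $h$ are defined over $k$, applying $\sigma$ to $\lambda(v) = \phi(h(v,\vec{X}) - h(\vec{X},v),\,v)$ gives ${}^{\sigma}\lambda(v) = \phi(h(w,Y) - h(Y,w),\,w)$. To compute ${}^{\sigma'}\lambda(v)$ I pick any lift of $\zeta_\sigma$ to $M(k_s)$ -- the result is independent of the choice because $G_2$ is central in $M$ -- and the convenient lift is $\tilde\zeta_\sigma = \phi(0,\,Y-\vec{X})$, whose inverse via the group law (\ref{grouplaw}) is $\phi(h(Y-\vec{X},\,Y-\vec{X}),\,\vec{X}-Y)$. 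Expanding the conjugate $\tilde\zeta_\sigma \cdot {}^{\sigma}\lambda(v) \cdot \tilde\zeta_\sigma^{-1}$ using (\ref{grouplaw}) and bi-additivity of $h$ (here extended to $G_1 \times G_1$, which is crucial since $\vec{X},Y \in G_1(k_s)$ need not lie in $V(k_s)$), the second coordinate collapses to $w$, and after fully expanding the six $h$-terms appearing in the first coordinate they cancel to leave exactly $h(w,\vec{X}) - h(\vec{X},w)$. Thus the conjugate equals $\phi(h(w,\vec{X}) - h(\vec{X},w),\,w) = \lambda(w) = \lambda({}^{\sigma}v)$, proving Galois-equivariance.

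The only obstacle is bookkeeping: the ``commutator-style'' choice $h(v,\vec{X}) - h(\vec{X},v)$ for the $G_2$-component of $\lambda$ is precisely engineered so that replacing $\vec{X}$ by its Galois translate $Y$ produces, after conjugation by $\tilde\zeta_\sigma$, the correct value at $w = {}^{\sigma}v$; any other choice would leave an uncancelled error term. Note that the section $\lambda$ is in general not a homomorphism -- this is consistent with the statement -- since the non-triviality of the extension class of $M$ is exactly what obstructs a group-theoretic splitting, and the formula makes no attempt to respect the group law.
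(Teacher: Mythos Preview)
Your proof is correct and follows essentially the same route as the paper's: both verify the equivariance identity ${}^{\sigma'}\lambda(v) = \lambda({}^{\sigma}v)$ by explicitly computing the conjugation $\zeta_\sigma \cdot {}^{\sigma}\lambda(v) \cdot \zeta_\sigma^{-1}$ in the $\phi$-coordinates and then collapsing the resulting $h$-terms using bi-additivity of the extension of $h$ to $G_1 \times G_1$. The only cosmetic difference is that the paper first records the general conjugation formula $\alpha \cdot (x,v) \cdot \alpha^{-1} = (x + h(B,v) - h(v,B),\, v)$ for $\alpha = (A,B)$ and then specializes, whereas you pick the explicit lift $\tilde\zeta_\sigma = \phi(0,\,Y-\vec{X})$ and carry out the triple product directly; the underlying cancellation is identical.
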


\begin{proof}
We will use the Galois-equivariant (because it is defined over $k$) map $\phi$ to identify $M$ with $G_2 \times V$ in order to ease notation. Then the group law on $M$ goes over to the group law (\ref{grouplaw}). We need to show that for $v \in V_{k_s}$, we have
\begin{equation}
\label{Galoiseqvt?}
\lambda({}^{\sigma}v) \overset{?}{=} {}^{\sigma'}\lambda(v).
\end{equation}

We first compute the left side of (\ref{Galoiseqvt?}). By definition of $\lambda$,
\begin{equation}
\label{Galoiseqvtleftside}
\lambda({}^{\sigma}v) = \left( h({}^{\sigma}v, \overrightarrow{X}) - h(\overrightarrow{X}, {}^{\sigma}v), {}^{\sigma}v\right).
\end{equation}
Now we compute the right side of (\ref{Galoiseqvt?}). In order to do this, we first compute the general formula for conjugation on $M$ via the group law formula (\ref{grouplaw}). So let $\alpha := (A, B)$, $m := (x, v) \in M$. Then, using the biadditivity of $h$, we have
\begin{align}
\label{conjugationformula}
\alpha \cdot m \cdot \alpha^{-1} & = (A, B)\cdot(x, v)\cdot (-A + h(B, B), -B) \\
&= (A + x + h(B, v), B + v)\cdot (-A + h(B, B), -B) = (x + h(B, v) - h(v, B), v).
\end{align}
In particular, as we noted earlier, the conjugation action only depends on the image $B$ of $\alpha$ in $V$. Using (\ref{newaction}), we have that the right side of (\ref{Galoiseqvt?}) equals
\[
\zeta_{\sigma}\cdot {}^{\sigma}\lambda(v) \cdot \zeta_{\sigma}^{-1}.
\]
Using (\ref{conjugationformula}) and the fact that $h$ is Galois-equivariant (because it is defined over $k$), we therefore find that the right side of (\ref{Galoiseqvt?}) has the same $V$-coordinate  ${}^{\sigma}v$ as the right side (see (\ref{Galoiseqvtleftside})), while its $G_2$-coordinate is given by the formula
\[
h({}^{\sigma}v, {}^{\sigma}\overrightarrow{X}) - h({}^{\sigma}\overrightarrow{X}, {}^{\sigma}v) + h({}^{\sigma}\overrightarrow{X} - \overrightarrow{X}, {}^{\sigma}v) - h({}^{\sigma}v, {}^{\sigma}\overrightarrow{X} - \overrightarrow{X}).
\]
Using the biadditivity of $h$, we see that this agrees with the $G_2$-coordinate of the left side (\ref{Galoiseqvtleftside}) of (\ref{Galoiseqvt?}). This completes the proof of the lemma.
\end{proof}

Returning to the proof of Proposition \ref{connectingmap}, by our discussion above the connecting map $\delta_{\beta}$ is obtained by negating $\pi_{\beta}$ of the right side of (\ref{eqvtsection}) for $v \in V(S)$ to get an element of $\overline{G}_2(S)$ representing a class in ${\rm{H}}^1(S, W)$. How do we compute $\pi_{\beta}$ in terms of the components on $M$ given by $\phi$? Recall that $\eta': (G_2)_{k'} \times V_{k'} \xrightarrow{\sim} M_{k'}$ is the isomorphism of $(G_2)_{k'}$-torsors over $V_{k'}$ obtained by pushing out $\phi'$ along the inclusion $l_{k'}: W_{k'} \hookrightarrow (G_2)_{k'}$. We know how to compute $\pi_{\beta}$ in terms of the components given by $\eta'$. Indeed, with respect to these components, $\pi_{\beta}$ is given simply by taking the $G_2$-component and applying $g$. In fact, $M$ is identified with $(G_2 \times U)/\psi(W)$, where $\psi$ is the anti-diagonal inclusion $w \mapsto (l(w), t(w)^{-1})$. Then $\pi_{\beta}$ is computed by lifting $m \in M$ to an element of $G_2 \times U$, projecting onto $G_2$, and applying $g$. 

But via the isomorphism $\eta'$, the $G_2$-component of a lift of $m' \in M_{k'}$ (more precisely, $m' \in M(R')$ for some $k'$-algebra $R'$) to $(G_2 \times U)_{k'}$ is identified with the $G_2$-component of $\eta'^{-1}(m')$, since $\eta'$ is the pushout of an isomorphism $\phi': W_{k'} \times V_{k'} \rightarrow U_{k'}$ of $W_{k'}$-torsors over $V_{k'}$. Thus, we need to apply $g$ to the $G_2$-component of $\eta'^{-1}$ applied to the right side of (\ref{eqvtsection}) (for $v \in V(S)$) and then negate the result. This new $G_2$-component is just the $G_2$-component of that right side minus $n(v)$ (since for $m \in M$ mapping to $v$, $n(v)$ is by definition the difference between the $G_2$-components of $\phi^{-1}(m)$ and $\eta'^{-1}(m)$):
\[
h(v, \overrightarrow{X}) - h(\overrightarrow{X}, v) - n(v).
\]
Now we must apply $g$. Using the fact that $g$ is a homomorphism, we obtain:
\begin{equation}
\label{computegeqn3}
g(h(v, \overrightarrow{X})) - g(h(\overrightarrow{X}, v)) - g(n(v)).
\end{equation}
Note that this quantity must lie in $G_2(S)$, since it equals $\pi_{\beta}$ of a lift of $v \in V(S)$ to $M_{\beta}(S)$. Finally, the {\em negative} of (\ref{computegeqn3}) represents the class of $\delta_{\beta}(v) \in \overline{G}_2(S)/g(G_2(S)) \hookrightarrow {\rm{H}}^1(S, W)$. This completes the proof of the proposition.
\end{proof}

Now we return to Gabber's groups $U_a, V_a, W_a$. We drop the $a$ subscript for notational convenience and denote these groups by $U, V, W$.

\begin{lemma}
\label{connectingmap_a}
Let $k$ be an imperfect field of characteristic $p$, and let $a \in k - k^p$. Let $U, V, W$ denote the groups $U_a, V_a, W_a$ constructed in $\S \ref{gabbergps}$. Let $f, g: \Ga^2 \rightarrow \Ga$ denote the maps $f(x, y) := x - x^{p^2} - ay^{p^2}$, $g(x, y) := x + x^p + ay^p$, so that $V = \ker(f)$ and $W = \ker(g)$. For $\beta \in k$ representing a class in  ${\rm{H}}^1(k, V) \simeq k/f(k^2)$, let $\delta_{\beta}: V(k) \rightarrow {\rm{H}}^1(k, W) \simeq k/g(k^2)$ denote the connecting map associated to the twisted sequence
\[
1 \longrightarrow W \longrightarrow U_{\beta} \longrightarrow V \longrightarrow 1.
\]
Then for $(c, d) \in V(k) \subset \Ga^2(k) = k^2$, the class $\delta_{\beta}(c, d) \in {\rm{H}}^1(k, W) \simeq k/g(k^2)$ is represented by:
\[
\begin{cases}
c^2\beta, & p = 2 \mbox{ and } \F_4 \subset k, \\
c^2\beta + c^3, & p = 2 \mbox{ and } \F_4 \not \subset k, \\
2c^p\beta, & p > 2.
\end{cases}
\]
\end{lemma}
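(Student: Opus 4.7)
The plan is to apply Proposition \ref{connectingmap} with $G_1 = G_2 = \Ga^2$, the maps $f, g$ as in the statement, and $h$ the bi-additive cocycle extending the one that defines $U_a$. Explicitly, $h((x,y),(x',y')) = (xx'^p - x^p x', xy'^p - x'y^p)$ for $p > 2$, and $h_{\zeta}((x,y),(x',y')) = (\zeta^2 xx'^2 + \zeta x^2 x', \zeta^2 xy'^2 + \zeta x'y^2)$ for $p = 2$ (with $\zeta \in \F_4$ a primitive cube root of unity). Choose $\overrightarrow{X} = (X_1, X_2) \in k_s^2$ with $f(\overrightarrow{X}) = X_1 - X_1^{p^2} - aX_2^{p^2} = \beta$. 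The proposition represents $\delta_{\beta}(c, d)$ by $g(h(\overrightarrow{X}, v)) - g(h(v, \overrightarrow{X})) + g(n(v))$ with $v = (c, d)$, so the computation splits into the bi-additive piece and the correction $g(n(v))$.

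In the cases $p > 2$ and $(p = 2,\; \F_4 \subset k)$, Remark \ref{oddassumptions} lets me take $k' = k$, $\phi$ the pushout of the identification $U_a = W_a \times V_a$, and $n \equiv 0$. Then $\delta_{\beta}(c, d)$ is represented by $g(h(\overrightarrow{X}, v)) - g(h(v, \overrightarrow{X}))$ alone. Expanding via the explicit formulas and regrouping using bi-additivity, this will rewrite as $\gamma \bigl[c^p f(\overrightarrow{X}) - X_1^p f(v)\bigr]$ for a constant $\gamma$ depending on the characteristic ($\gamma = 2$ when $p > 2$, and $\gamma = \zeta + \zeta^2 = 1$ when $p = 2$, using $1 + \zeta + \zeta^2 = 0$). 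Substituting $f(\overrightarrow{X}) = \beta$ and $f(v) = 0$ then yields $2c^p\beta$ and $c^2\beta$ respectively.

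The remaining case $p = 2$, $\F_4 \not\subset k$ requires computing a nontrivial $n$. Take $k' := k \cdot \F_4$. The group $U_a$ arises from $U_a^{\zeta}$ by Galois descent along $[\sigma]\colon (w, v) \mapsto (w + b(v), v)$ with $b(x, y) = (x^3, xy^2)$, and the induced descent datum on the pushout $M$ is $(g, v) \mapsto (g + b(v), v)$. To exhibit the required $k$-isomorphism $\phi\colon \Ga^2 \times V_a \xrightarrow{\sim} M$ (whose existence is guaranteed by the vanishing of ${\rm{H}}^1(V_a, \Ga^2)$), I seek $\mu\colon (V_a)_{k'} \to \Ga^2$ making $v \mapsto (\mu(v), v)$ Galois-equivariant; this reduces to the $1$-cocycle equation $\mu - \sigma(\mu) = b$ over $\Gal(k'/k)$. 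Using $\zeta + \zeta^2 = 1$, I verify that $\mu(x, y) := (\zeta x^3,\; \zeta xy^2)$ is a solution, so $n(v) = (\zeta c^3,\; \zeta cd^2)$. A short expansion of $g(n(v))$ combined with the relation $c + c^4 + ad^4 = 0$ will give $g(n(v)) = c^3$, which, added to the contribution $c^2\beta$ from the previous paragraph, yields the claimed $c^2\beta + c^3$.

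The main obstacle is the last case: correctly transferring the Galois descent datum from $U_a$ to the pushout $M$, solving the resulting cocycle equation for $\mu$, and then checking that the evaluation of $g(n(v))$ combines cleanly with the bi-additive contribution. The calculations in the easier cases are routine once one is guided by the factorization of the target expression through $f(\overrightarrow{X})$ and $f(v)$.
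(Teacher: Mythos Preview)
Your plan is essentially the paper's. The factorization $g(h(\overrightarrow{X},v)) - g(h(v,\overrightarrow{X})) = \gamma\bigl[c^p f(\overrightarrow{X}) - X_1^p f(v)\bigr]$ is a clean reorganization of the paper's direct substitution, your choice $\mu = \zeta b$ is exactly the Galois-equivariant section the paper finds, and $g(n(v)) = c^3$ is correct.

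There is one point to fix in the case $p=2$, $\F_4 \not\subset k$. Proposition~\ref{connectingmap} requires $h$ to be the bi-additive cocycle describing the group law on $M$ \emph{in the $k$-rational coordinates given by $\phi$}, not in the $\phi'$-coordinates over $k'$. After you translate the section by $\mu = \zeta b$, the cocycle in the new coordinates is
\[
h_{\mathrm{new}}(v,v') \;=\; h_\zeta(v,v') + \zeta\bigl(b(v+v') - b(v) - b(v')\bigr) \;=\; h_\zeta(v,v') + \zeta\,h^+(v,v'),
\]
which the paper computes explicitly as $h_{\mathrm{new}}((x,y),(x',y')) = (xx'^2,\,xy'^2)$. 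So you cannot literally reuse ``the contribution $c^2\beta$ from the previous paragraph'', since that was computed with $h_\zeta$, which is not defined over $k$. You must either redo the bi-additive computation with $h_{\mathrm{new}}$ (as the paper does), or observe that $h_\zeta - h_{\mathrm{new}} = \zeta h^+$ is \emph{symmetric}, so that $g(\zeta h^+(\overrightarrow{X},v)) - g(\zeta h^+(v,\overrightarrow{X})) = 0$ and the two bi-additive contributions coincide. Either way you recover $c^2\beta$ and hence $c^2\beta + c^3$, but this step needs the extra sentence of justification.
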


\begin{proof}
We will use the notation from \S \ref{gabbergps}. We first treat the case $p > 2$. The group $U$ as a $W$-torsor over $V$ is $W \times V$, and the group structure is $(w, v) \cdot (w', v') = (w + w' + h(v, v'), v + v')$, where $h: V \times V \rightarrow W$ is the bi-additive map defined by $h((x, y), (x', y')) = (xx'^p - x^px', xy'^p - x'y^p)$. This of course extends to a bi-additive map $\Ga^2 \times \Ga^2 \rightarrow \Ga^2$ defined by the same formula, and by abuse of notation we still denote this map by $h$. By Remark \ref{oddassumptions}, we may apply Proposition \ref{connectingmap} with $k' = k$ and $n = 0$ to compute $\delta_{\beta}$.

Choose $(x, y) \in k_s^2$ such that 
\begin{equation}
\label{condeqn1}
f(x, y) = x - x^{p^2} - ay^{p^2} = \beta.
\end{equation}
Applying Proposition \ref{connectingmap} (with $G_1 = G_2 = \Ga^2$, and $f$ and $g$ the defining equations for $V$ and $W$), a straightforward computation shows that $\delta_{\beta}(c, d) \in {\rm{H}}^1(k, W) \simeq k/g(k^2)$ is represented by the element of $k$ given by
\begin{eqnarray}\label{calceqn1}
&& g(h((x, y), (c, d))) - g(h((c, d), (x, y))) \notag \\
&=& 2\left(xc^p-x^pc+x^pc^{p^2}-x^{p^2}c^p+x^p(ad^{p^2}) - c^p(ay^{p^2})\right).
\end{eqnarray}
Since $(c, d) \in V(k)$, we have
\begin{equation}
\label{condeqn2}
c - c^{p^2} - ad^{p^2} = 0.
\end{equation}
Solving for $ay^{p^2}$ and $ad^{p^2}$ in (\ref{condeqn1}) and (\ref{condeqn2}), and substituting into (\ref{calceqn1}), a straightforward computation shows that the right side of (\ref{calceqn1}) equals $2c^p\beta$.

Now suppose that $p = 2$. The calculation when $\F_4 \subset k$ is substantially the same as the one above, since in this case the group $U$ is once again defined by a bi-additive map $h_{\zeta}: V \times V \rightarrow W$ that extends to a bi-additive map $h_{\zeta}: \Ga^2 \times \Ga^2 \rightarrow \Ga^2$. Namely, the map is $h_{\zeta}((x, y), (x', y')) = (\zeta^2xx'^2 + \zeta x^2x', \zeta^2xy'^2 + \zeta x'y^2)$ for a primitive cube root of unity $\zeta \in \F_4^{\times}$. We choose $(x, y) \in k_s^2$ such that
\begin{equation}
\label{condeqn3}
f(x, y) = x + x^4 + ay^4 = \beta.
\end{equation}
Then Proposition \ref{connectingmap} tells us that $\delta_{\beta}(c, d) \in {\rm{H}}^1(k, W) \simeq k/g(k^2)$ is represented by the element of $k$ given by
\begin{eqnarray}\label{calceqn3}
&& g(h((x, y), (c, d))) - g(h((c, d), (x, y))) \notag \\
&=& cx^2 + c^2x + c^2x^4 + c^4x^2 + x^2(ad^4) + c^2(ay^4), 
\end{eqnarray}
where we have used the equality $\zeta^2 = \zeta + 1$. Since $(c, d) \in V(k)$, we have
\begin{equation}
\label{condeqn4}
c + c^4 + ad^4 = 0.
\end{equation}
Using (\ref{condeqn3}) and (\ref{condeqn4}) to solve for $ay^4$ and $ad^4$, and then substituting back into (\ref{calceqn3}), a straightforward computation shows that the right side of (\ref{calceqn3}) equals $c^2\beta$.

It remains to treat the case when $p = 2$ and $\F_4 \not \subset k$. This is trickier because in this case $U$ is not defined directly by a bi-additive map $V \times V \rightarrow W$, but rather one defines $U_{k(\zeta)}$ by such a map, and then defines $U$ by Galois descent. We use the notation of Proposition \ref{connectingmap}. Let us begin by working over $k(\zeta)$. Then we have an isomorphism $W_{k(\zeta)} \times V_{k(\zeta)} \xrightarrow{\phi'} U_{k(\zeta)}$ of $W_{k(\zeta)}$-torsors over $V_{k(\zeta)}$, and correspondingly, we obtain an isomorphism $(\Ga^2)_{k(\zeta)} \times V_{k(\zeta)} \xrightarrow{\eta'} M_{k(\zeta)}$ of $\Ga^2$-torsors over $V_{k(\zeta)}$, in which the group structure on $M_{k(\zeta)}$ (via the map $\eta'$) is defined by the bi-additive map $h_{\zeta}: V_{k(\zeta)} \times V_{k(\zeta)} \rightarrow (\Ga^2)_{k(\zeta)}$ given by $h_{\zeta}(v, v') = h^+(\zeta^2v, v')$ for $h^+(v, v') = b(v+v') - b(v) - b(v')$ with $b: V \rightarrow W \subset \Ga^2$ defined by the formula $b(x, y) = (x^3, xy^2)$.

If $\sigma$ denotes the nontrivial element of Gal$(k(\zeta)/k)$, then (see \S \ref{gabbergps}) the Galois descent datum on $U^{\zeta}$ which defines $U$ is given by the isomorphism $[\sigma]: U^{\zeta} \xrightarrow{\sim} \sigma^*(U^{\zeta}) = U^{\zeta + 1}$ given by $(w, v) \mapsto (w + b(v), v)$. Thus, the Galois action on $U^{\zeta} = U_{k(\zeta)}$ obtained via base change from the group $U$ is ${}^{\sigma}(w, v) = ({}^{\sigma}w - b({}^{\sigma}v), {}^{\sigma}v)$. Correspondingly, the Galois action on $(\Ga^2)_{k(\zeta)} \times V_{k(\zeta)} \underset{\sim}{\xrightarrow{\phi'}} M_{k(\zeta)}$ is ${}^{\sigma}(\alpha, v) = ({}^{\sigma}\alpha - b({}^{\sigma}v), {}^{\sigma}v)$.

The scheme-theoretic section $v \mapsto \eta' (0, v)$ of $M \rightarrow V$ over $k(\zeta)$ is not Galois-equivariant. We need to find a Galois-equivariant one, which will then descend to a section $V \rightarrow M$ over $k$. One easily checks that the section $v \mapsto \phi'(-\zeta b(v), v)$ does the job (using that $\zeta^2 + 1 = \zeta$ since $p = 2$).

We now have two $\Ga^2$-torsor isomorphisms of $M_{k(\zeta)}$ with $\Ga^2 \times V_{k(\zeta)}$: $\phi'$ and the one coming from this new section. The formulas for changing coordinates between these two identifications are
\[
(\alpha, v)_{{\rm{new}}} = (\alpha - \zeta b(v), v)_{\phi'}, 
\]
\begin{equation}
\label{changeofcoords}
(\alpha, v)_{\phi'} = (\alpha + \zeta b(v), v)_{{\rm{new}}}.
\end{equation}
(In terms of the language of the setup preceding Proposition \ref{connectingmap}, the ``new'' coordinates are the $\phi$-coordinates.)
Thus, we compute the group law on $M_{k(\zeta)}$ in the new coordinates as follows:
\begin{align*}
(\alpha, v)_{{\rm{new}}}\cdot (\alpha', v')_{{\rm{new}}} & = (\alpha - \zeta b(v), v)_{\phi'}\cdot (\alpha' - \zeta b(v'), v')_{\phi'} \\
& = (\alpha + \alpha' - \zeta b(v) - \zeta b(v') + h_{\zeta}(v, v'), v + v')_{\phi'} \\
& = (\alpha + \alpha' + h_{\zeta}(v, v') + \zeta \left((b(v+v') - b(v) - b(v')\right), v + v')_{{\rm{new}}}.
\end{align*}
It follows that via $\phi: \Ga^2 \times V \xrightarrow{\sim} M$, the $k$-group law on $M$ transports over to the one on $\Ga^2 \times V$ given by the $2$-cocycle $h_{{\rm{new}}}(v, v'): V \times V \rightarrow \Ga^2$ defined by $h_{{\rm{new}}}(v, v') := h_{\zeta}(v, v') + \zeta \left(b(v+v') - b(v) - b(v')\right)$. Explicitly, the role of $\zeta$ ``cancels out'' because one easily computes that
\[
h_{{\rm{new}}}((x, y), (x', y')) = (xx'^2, xy'^2).
\]
This is a bi-additive map $V \times V \rightarrow \Ga^2$ that extends to a bi-additive map $\Ga^2 \times \Ga^2 \rightarrow \Ga^2$ via the same formula, so we may apply Proposition \ref{connectingmap} to compute $\delta_{\beta}$ using $h_{{\rm{new}}}$ as $h$ there. Equation (\ref{changeofcoords}) shows that in the notation of that Lemma, $n(v) = \zeta b(v)$.

We will first compute $g(n(v))$ for $v = (c, d) \in V(k)$, using that $g(x, y) = x + x^2 + ay^2$. 
Since $(c, d) \in V(k)$, we have 
\begin{equation}
\label{condeqn6}
c + c^4 + ad^4 = 0.
\end{equation}
Using the relation $\zeta^2 = 1 + \zeta$, we obtain
\begin{align*}
g(n(v)) & = g(\zeta b(v)) \\
& = g(\zeta c^3, \zeta cd^2) \\
& = (c^6 + ac^2d^4) + \zeta(c^3 + c^6 + ac^2d^4) \\
& = c^2(c^4 + ad^4) + \zeta c^2(c + c^4 + ad^4) \\
& = c^2(c) + \zeta c^2(0) \\
& = c^3, 
\end{align*}
where the penultimate equality uses (\ref{condeqn6}). To summarize:
\begin{equation}
\label{g(n(v))eqn}
g(n(v)) = c^3.
\end{equation}

Let $(x, y) \in k_s^2$ satisfy
\begin{equation}
\label{condeqn5}
f(x, y) = x + x^4 + ay^4 = \beta.
\end{equation}
Then by Proposition \ref{connectingmap}, $\delta_{\beta}(c, d) \in {\rm{H}}^1(k, W) \simeq k/g(k^2)$ is represented by the element
of $k$ given by 
$$g(h_{{\rm{new}}}((x, y), (c, d))) - g(h_{{\rm{new}}}((c, d), (x, y))) + g(n(v)) 
= g(xc^2, xd^2) - g(cx^2, cy^2) + c^3.$$
By explicit computation of the latter via the definition of $g$, the right side is equal to
\begin{equation}\label{eqn12}
cx^2 + c^2x^4 + c^2(ay^4) + xc^2 + x^2c^4 + x^2(ad^4) + c^3, 
\end{equation}
where we have used (\ref{g(n(v))eqn}) and that $p = 2$. Solving (\ref{condeqn5}) and (\ref{condeqn6}) for $ay^4$ and $ad^4$ respectively, and plugging into (\ref{eqn12}) shows that (\ref{eqn12}) equals $c^2\beta + c^3$.
\end{proof}

We are now ready to prove Theorem \ref{innerinvariancefails}. By Lemmas \ref{tautimesSha} and \ref{V(k)finite}, it is enough to find an inner form $U'$ of $U$ with arbitrarily small Tamagawa number. Suppose given $\beta \in k$ representing an element in ${\rm{H}}^1(k, V) \simeq k/f(k^2)$. Then we may ``twist by $\beta$'' to obtain the exact sequence
\[
1 \longrightarrow W \xlongrightarrow{j_{\beta}} U_{\beta} \xlongrightarrow{\pi_{\beta}} V \longrightarrow 1.
\]
Since $W$ is unipotent, $\widehat{W}(k) = 0$. By Lemma \ref{tamsinsequences}, therefore, we obtain
\[
\tau(U_{\beta}) \cdot \# \left( \frac{V(\A)}{\pi_{\beta}(U_{\beta}(\A))V(k)} \right) = \tau(W)\tau(V) \cdot \# \ker(\Sha(j_{\beta})).
\]

The right side is bounded above independently of $\beta$, so in order to make $\tau(U_{\beta})$ arbitrarily small, we need to show that for any $M > 0$, there exists $\beta \in k$ such that $\# (V(\A)/\pi_{\beta}(U_{\beta}(\A))V(k)) > M$. Since $V(k)$ is finite (Lemma \ref{V(k)finite}), in order to do this it suffices (changing $M$) to find $\beta \in k$ such that
\[
\# \left( \frac{V(\A)}{\pi_{\beta}(U_{\beta}(\A))} \right) > M.
\]
In order to do this, in turn, it suffices to show that for any finite set $S$ of places of $k$, there exists $\beta \in k$ such that for all $v \in S$, the map $U_{\beta}(k_v) \rightarrow V(k_v)$ is not surjective. 

For each $v \in S$, choose $(c_v, d_v) \in V(k_v) \subset \Ga^2(k_v) = k_v \times k_v$ with $c_v \neq 0$. This is possible because $V(k_v)$ is a positive-dimensional Lie group over $k_v$, hence infinite. Now the point $(c_v, d_v)$ does not lie in $\pi_{\beta}(U_{\beta}(k_v))$ if and only if $\delta_{\beta}(c_v, d_v) \in {\rm{H}}^1(k_v, W_v)$ is nonzero. By Lemma \ref{connectingmap_a}, this is equivalent to
\begin{equation}
\label{betanotineqn}
\beta \notin \begin{cases}
c_v^{-2}g(k_v^2), & p = 2 \mbox{ and } \F_4 \subset k_v, \\
c_v^{-2}g(k_v^2) + c_v, & p = 2 \mbox{ and } \F_4 \not \subset k_v, \\
(2c_v^p)^{-1}g(k_v^2), & p > 2.
\end{cases}
\end{equation}
The map $g: k_v^2 \rightarrow k_v$ is induced by a smooth algebraic map (because $W$ is smooth), hence it is open. Therefore, the subgroups $g(k_v^2) \subset k_v$ are open, hence closed. Weak approximation then provides $\beta \in k$ satisfying (\ref{betanotineqn}) for all $v \in S$ provided that we show that these groups are not all of $k_v$. That is, we need to show that $g: k_v^2 \rightarrow k_v$ is not surjective. This is equivalent to having ${\rm{H}}^1(k_v, W) \neq 0$, which follows from \cite[Prop.\,5.15]{rospic}. The proof of Theorem \ref{innerinvariancefails} is complete. \hfill \qed

\section{Inner twisting: positive results}
\label{innertwistpos}

In this section we will prove Theorems \ref{tautwistpowerofpthm} and \ref{tauboundedinnerthm}, thereby showing that despite the pathologies exhibited in Theorem \ref{innerinvariancefails}, the variation of Tamagawa numbers within the set of inner forms of a given group does exhibit some regularity. A key to proving both results is the following lemma.

\begin{lemma}
\label{centralsubgroup}
Let $G$ be a connected linear algebraic group over a field $k$. If the $k$-unipotent radical $\mathscr{R}_{u, k}(G)$ is wound and nontrivial, then $G$ contains a nontrivial smooth connected central unipotent $k$-subgroup.
\end{lemma}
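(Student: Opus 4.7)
The plan is to let $U := \mathscr{R}_{u,k}(G)$ and produce the desired subgroup as the last nontrivial term of the descending central series of $U$. Concretely, define $\mathscr{C}^1 U := U$ and $\mathscr{C}^{i+1} U := [U, \mathscr{C}^i U]$, and let $n$ be maximal with $A := \mathscr{C}^n U \neq 1$; such $n$ exists because smooth connected unipotent $k$-groups are nilpotent. Since the commutator of smooth connected $k$-subgroups of an affine $k$-group is smooth connected, $A$ is a nontrivial smooth connected $k$-subgroup of $U$, and hence is wound (woundness is inherited by smooth connected subgroups). The identity $[U,A] = \mathscr{C}^{n+1}U = 1$ shows that $U$ centralizes $A$; in particular, $A$ is commutative, and $A$ is characteristic in $U$, hence normal in $G$.

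Conjugation therefore gives a $k$-group action of $G/U$ on $A$, and $G/U$ is pseudo-reductive by the very definition of $\mathscr{R}_{u,k}(G)$. To finish, it suffices to show this action is trivial, for then $A \subset Z_G$, and $A$ is the desired nontrivial smooth connected central unipotent $k$-subgroup.

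The hard step is precisely this last one: proving that a pseudo-reductive $k$-group $H$ cannot act nontrivially by $k$-group automorphisms on a wound commutative unipotent $k$-group $A$. I would attack this by analyzing the induced morphism $H \to \underline{\mathrm{Aut}}(A)$ of $k$-group functors. A first reduction is to show that any $k$-torus $T \subset H$ must act trivially on $A$: a nontrivial weight of $T$ on $\mathrm{Lie}(A)$ would, after integrating along the corresponding cocharacter, produce a $T$-stable copy of $\Ga$ inside $A$, contradicting woundness. One then bootstraps from the torus case to the whole pseudo-reductive $H$, using that $H$ is generated (in the sense of the structure theory of pseudo-reductive groups \cite{cgp}) by its maximal $k$-tori together with pieces whose automorphism actions on wound targets are likewise forced to be trivial.

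If this bootstrap turns out to be delicate---\emph{e.g.}\ because of the exotic behavior of $\underline{\mathrm{Aut}}(A)$ over imperfect fields---a fallback is to replace $A$ by the reduced identity component of the scheme-theoretic fixed locus $A^G$ and argue by induction on $\dim G$: the Lie algebra analysis above shows $\mathrm{Lie}(A)^H \neq 0$, which forces $A^G$ to have positive dimension, so its reduced identity component furnishes a nontrivial smooth connected central unipotent $k$-subgroup, possibly after one more iteration.
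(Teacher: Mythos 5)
Your first half — taking $A$ to be the last nontrivial term $\mathscr{C}^n U$ of the descending central series of $U = \mathscr{R}_{u,k}(G)$, noting it is smooth, connected, wound, commutative, central in $U$, and normal in $G$, and reducing to showing the conjugation action of $P := G/U$ on $A$ is "small enough'' — is exactly the paper's starting point. The divergence, and the gap, is in the second half.

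You aim to prove that the pseudo-reductive group $P$ acts \emph{trivially} on $A$, which would give that $A$ itself is central in $G$. This is a strictly stronger assertion than the lemma, and it is not what the paper establishes. Your torus step is fine and matches the paper's use of \cite[Prop.\,B.4.4]{cgp} (wound unipotent groups admit no nontrivial torus action). Since $\mathscr{D}P$ is its own derived group and hence generated by $k$-tori (\cite[Props.\,1.2.6, A.2.11]{cgp}), this also kills the $\mathscr{D}P$-action. But it does \emph{not} kill the action of the commutative quotient $C := P/\mathscr{D}P$: killing its maximal torus $T$ still leaves the wound unipotent quotient $V := C/T$ acting on $A$, and there is no reason this action should be trivial. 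Your "bootstrap'' appeals to $P$ being generated by tori "together with pieces whose actions on wound targets are likewise forced to be trivial,'' but those pieces are precisely the unipotent quotient $V$, and the triviality of its action is the unproven content, not an input. The paper avoids the issue entirely: it does not claim $V$ acts trivially on $A$, but instead forms the unipotent group $H := V \ltimes A$ and takes the last nontrivial term of the descending central series of $H$ (an argument entirely parallel to the first step), which is automatically central in $H$, hence a nontrivial smooth connected subgroup of $A$ on which $V$ acts trivially. That is the missing trick.

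Your fallback does not repair this. From $\mathrm{Lie}(A)^P \neq 0$ (which does follow from the Kolchin fixed-vector theorem applied to the unipotent $V$) you want to conclude that the fixed-point scheme $A^G = A^P$ has positive dimension and that $((A^G)^0)_{\mathrm{red}}$ is a nontrivial smooth connected central subgroup. There are two problems here, both specific to imperfect fields. First, $\mathrm{Lie}(A^P) = \mathrm{Lie}(A)^P \neq 0$ does not force $\dim A^P > 0$: the fixed locus could be a finite infinitesimal group scheme (as for $\alpha_p$) with a large Lie algebra but dimension zero. Second, even when $A^P$ has positive dimension, over an imperfect field the reduced identity component of a group scheme need not be a subgroup, and a reduced finite-type $k$-scheme need not be smooth; a positive-dimensional affine $k$-group scheme can have \emph{no} nontrivial smooth connected $k$-subgroup at all (e.g.\ $\{x^p = a y^p\} \subset \Ga^2$ for $a \notin k^p$). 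So the extraction step is genuinely broken, not merely "possibly requiring one more iteration.'' The correct replacement is, again, the paper's second descending-central-series argument inside $V \ltimes A$, which produces a \emph{smooth connected} central subgroup by construction rather than by passing to fixed loci.
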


\begin{remark}
The wound assumption is critical, as one sees by considering the group $G = \Gm \ltimes \Ga$, with the action given functorially by $t\cdot x = tx$. In fact, as we will see in the proof below, the key point is that wound unipotent groups admit no nontrivial torus action.
\end{remark}

\begin{proof}
Let $U := \mathscr{R}_{u, k}(G)$. Consider the descending central series $\mathscr{D}_iU$ of $U$, defined inductively by $\mathscr{D}_0U := U$, and for $n \geq 0$, $\mathscr{D}_{n+1}U := [U, \mathscr{D}_nU]$, the commutator group of $U$ and $\mathscr{D}_nU$. Since $U$ is unipotent, it is a nilpotent group; that is, $\mathscr{D}_iU = 0$ for $i$ sufficiently large. Let $n$ be the maximal nonnegative integer such that $\mathscr{D}_nU \neq 1$. Then $W := \mathscr{D}_nU$ is a nontrivial wound smooth connected central characteristic subgroup of $U$; this last adjective means that it is preserved by all automorphisms and that this remains true after extension on $k$.

Let $P := G/U$ be the maximal pseudo-reductive quotient of $G$, and let $\pi: G \rightarrow P$ denote the quotient map. We will first show that $W$ is central in $\pi^{-1}(\mathscr{D}P)$. The group $W$ is normal in $G$, since it is a characteristic subgroup of the normal subgroup $U$. Further, since $W$ is central in $U$, $\mathscr{D}P$ acts on $W$ by ``conjugation'', namely, by lifting to $\pi^{-1}(\mathscr{D}P)$ and then acting by conjugation. We need to show that this action is trivial. But $\mathscr{D}P$ is equal to its own derived group \cite[Prop.\,1.2.6]{cgp}, hence is generated by its $k$-tori \cite[Prop.\,A.2.11]{cgp}. It therefore suffices to show that any torus in $\mathscr{D}P$ acts trivially on the wound group $U$. But wound unipotent groups admit no nontrivial torus action \cite[Prop.\,B.4.4]{cgp}. Therefore, $U$ is central in $\pi^{-1}(\mathscr{D}P)$.

This centrality implies that the smooth connected commutative affine group $C := P/\mathscr{D}P$ acts on $U$ by conjugation. Letting $T \subset C$ be the maximal torus, $T$ acts trivially on $U$, again because tori cannot act nontrivially on wound unipotent groups. Therefore, the unipotent quotient $V := C/T$ acts on $U$. We need to show that $U$ contains some nontrivial smooth connected subgroup on which $V$ acts trivially.

Consider the unipotent group $H := V \ltimes U$. If $H$ is commutative, then $V$ acts trivially on $U$, and we are done. Otherwise, $\mathscr{D}_1H = \mathscr{D}H$ is nontrivial, and all of the groups $\mathscr{D}_iH$ ($i > 0$) are contained in the normal subgroup $U \unlhd H$. Since $H$ is unipotent, there is a maximal positive integer $m$ such that $\mathscr{D}_mH \neq 1$. Then $V$ acts trivially on $\mathscr{D}_mH$, so we are done.
\end{proof}

We also need the following simple lemma.

\begin{lemma}
\label{splitquottam}
Suppose that we have an exact sequence
\[
1 \longrightarrow U \longrightarrow G \longrightarrow H \longrightarrow 1
\]
of connected linear algebraic groups over a global function field $k$, with $U$ split unipotent. Then $\tau(G) = \tau(H)$.
\end{lemma}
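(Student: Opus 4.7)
The plan is to reduce to the case $U = \Ga$ by dévissage and then apply the Tamagawa number formula from Lemma \ref{tamsinsequences}.

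First I would use the definition of split unipotent: $U$ admits a filtration $1 = U_0 \subset U_1 \subset \dots \subset U_n = U$ by normal $k$-subgroups such that each successive quotient $U_i/U_{i-1} \simeq \Ga$. By induction on $n$ (applied to the short exact sequences $1 \to U_{n-1} \to G \to G/U_{n-1} \to 1$ and $1 \to \Ga \to G/U_{n-1} \to H \to 1$), it suffices to prove the statement when $U = \Ga$.

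So assume we have $1 \to \Ga \xrightarrow{j} G \xrightarrow{\pi} H \to 1$. I would verify and then apply Lemma \ref{tamsinsequences} to this sequence. The key inputs are:
\begin{itemize}
\item ${\rm{H}}^1(k_v, \Ga) = 0$ for every place $v$ (by the additive version of Hilbert 90), and $\Ga$ has vanishing fppf cohomology over $\calO_v$ as well, which by a restricted-product argument gives ${\rm{H}}^1(\A, \Ga) = 0$. Consequently $\pi: G(\A) \to H(\A)$ is surjective, so in particular $\pi(G(\A))$ is normal in $H(\A)$ and the factor $|H(\A)/\pi(G(\A))H(k)|$ on the left side of Lemma \ref{tamsinsequences} equals $1$.
\item $\Sha^1(\Ga) \subset {\rm{H}}^1(k, \Ga) = 0$, so $\ker(\Sha^1(j)) = 0$ has cardinality $1$.
\item $\widehat{\Ga}(k) = \Hom_{k\text{-gp}}(\Ga, \Gm) = 0$, so $\widehat{j}: \widehat{G}(k) \to \widehat{\Ga}(k) = 0$ is the zero map onto $0$, and its cokernel has cardinality $1$.
\item $\tau(\Ga) = 1$; this is classical (see e.g.\ \cite[Chap.\,II, \S 2.3]{oesterle}), arising from the fact that the standard self-dual additive Haar measure on $\A$ gives $\A/k$ total measure $1$ by the Riemann-Roch-based calculation for the function field case.
\end{itemize}

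Plugging all of this into the formula of Lemma \ref{tamsinsequences}, we obtain $\tau(G) \cdot 1 = \tau(\Ga) \cdot \tau(H) \cdot 1 \cdot 1 = \tau(H)$, which completes the inductive step and hence the proof. The main obstacle, such as it is, is simply the bookkeeping of the vanishings in the formula; everything else is standard. No step requires genuinely new ideas, which is precisely why the split unipotent case is well-behaved and why the paper's pathologies are forced to come from wound unipotent groups.
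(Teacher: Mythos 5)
Your verification of the base case is correct (the vanishings of ${\rm{H}}^1(k_v, \Ga)$, ${\rm{H}}^1(\A, \Ga)$, $\Sha(\Ga)$, $\widehat{\Ga}(k)$, and $\tau(\Ga) = 1$ are all as you say), but the d\'evissage that is supposed to reduce to that case has a genuine gap. The filtration $1 = U_0 \subset \cdots \subset U_n = U$ in the definition of split unipotent consists of subgroups normal in $U$, \emph{not} in $G$. Your induction passes through the short exact sequence $1 \to U_{n-1} \to G \to G/U_{n-1} \to 1$, which does not exist unless $U_{n-1}$ is normal in the ambient group $G$. This can fail: for $G = \mathrm{SL}_2 \ltimes \Ga^2$ with the standard action, the split unipotent radical $U = \Ga^2$ has no $1$-dimensional $G$-normal subgroup at all, so no split filtration of $U$ consists of $G$-normal terms.

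The fix is to reorganize the induction so that the ambient group shrinks along with the normal subgroup, which is exactly what the paper does. Apply Lemma \ref{tamsinsequences} once, directly to $1 \to U \to G \to H \to 1$: all the cofactor vanishings you checked for $\Ga$ hold verbatim for a general split unipotent $U$, because ${\rm{H}}^1(k, U) = {\rm{H}}^1(k_v, U) = 1$ for split unipotent $U$ by a d\'evissage purely inside $U$ (where the filtration \emph{is} normal), and $\widehat{U}(k) = 0$. This yields $\tau(G) = \tau(U)\tau(H)$ without any assumption on $U$ beyond being split unipotent. The only thing left is $\tau(U) = 1$, and \emph{that} is where the induction on the filtration should live: apply the just-proved identity to $1 \to U_{n-1} \to U \to \Ga \to 1$, where $U$ now plays the role of the ambient group and $U_{n-1}$ is normal in it, to get $\tau(U) = \tau(U_{n-1})\tau(\Ga)$, and conclude by induction and $\tau(\Ga) = 1$. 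So the computations in your writeup survive intact; what needs to change is the skeleton of the d\'evissage, which must never pass a non-$G$-normal subgroup to a quotient of $G$.
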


\begin{proof}
We claim that we have
\begin{equation}
\label{taupfspliteqn}
\tau(G) = \tau(H)\tau(U).
\end{equation}
Assuming this, then applying the lemma when $G, H$ are themselves split unipotent, and using the fact that $\tau(\Ga) = 1$ \cite[Chap.\,I, \S 5.14, Example 1]{oesterle}, we deduce by induction that $\tau(U) = 1$ for all split unipotent groups $U$, hence returning to the general case above (where $G, H$ are not necessarily split unipotent), $\tau(G) = \tau(H)$.

In order to prove (\ref{taupfspliteqn}), we first note that $\widehat{U}(k) = 0$, since $U$ is unipotent. It therefore suffices by Lemma \ref{tamsinsequences} to show that $\Sha(U) = 1$ and that the map $G(\A) \rightarrow H(\A)$ is surjective. In fact, since $U$ is split unipotent, we have ${\rm{H}}^1(k, U) = {\rm{H}}^1(k_v, U) = 1$ for every place $v$ of $k$, from which both assertions follow, the second also requiring \cite[Prop.\,1.5]{rospred}.
\end{proof}

\begin{proof}[Proof of Theorems $\ref{tautwistpowerofpthm}$ and $\ref{tauboundedinnerthm}$]
We proceed by induction on dim$(G)$, the $0$-dimensional case being trivial. (Actually, by phrasing things in terms of a minimal counterexample, we don't need to worry about the $0$-dimensional case.) Suppose that $G$ contains a nontrivial normal split unipotent subgroup $U$. Letting $H := G/U$, then we have an exact sequence
\[
1 \longrightarrow U \longrightarrow G \longrightarrow H \longrightarrow 1.
\]
Then $G/Z_G$ acts on this sequence, hence given any cocycle $\alpha \in Z^1(k, G/Z_G)$, we may twist the above sequence by a cocycle representing $\alpha$ to get a new sequence:
\[
1 \longrightarrow U_{\alpha} \longrightarrow G_{\alpha} \longrightarrow H_{\alpha} \longrightarrow 1.
\]
The group $U_{\alpha}$ is still split unipotent, as this may be checked over $k_s$. By Lemma \ref{splitquottam}, $\tau(G) = \tau(H)$ and $\tau(G_{\alpha}) = \tau(H_{\alpha})$. The propositions therefore follow for $G$ by induction, since $H_{\alpha}$ is an inner twist of $H$. (It is the twist of $H$ by the image of $\alpha$ under the map ${\rm{H}}^1(k, G/Z_G) \rightarrow {\rm{H}}^1(k, H/Z_H)$.)

We may therefore assume that $G$ contains no nontrivial normal split unipotent subgroups. Since the maximal split unipotent $k$-subgroup of $\mathscr{R}_{u, k}(G)$ is preserved by all $k_s$-automorphisms, so by the Zariski density of $G(k_s)$ in the smooth group $G$ it is normal in $G$, it follows that $\mathscr{R}_{u, k}(G)$ is wound unipotent. If $\mathscr{R}_{u, k}(G) = 1$ then $G$ is pseudo-reductive and we are done by Theorem \ref{innerinvariance}. So it only remains to treat the case in which $\mathscr{R}_{u, k}(G)$ is wound and nontrivial. Then by Lemma \ref{centralsubgroup}, $G$ contains a nontrivial smooth connected central unipotent subgroup $U \subset G$. Let $H := G/U$. Then we have the exact sequence with central kernel
\begin{equation}
\label{exactseq6}
1 \longrightarrow U \xlongrightarrow{j} G \xlongrightarrow{\pi} H \longrightarrow 1.
\end{equation}

Given a cocycle $\alpha \in Z^1(k, G/Z_G)$, we may twist (\ref{exactseq6}) by $\alpha$ to obtain the new sequence
\[
1 \longrightarrow U \xlongrightarrow{j_{\alpha}} G_{\alpha} \xlongrightarrow{\pi_{\alpha}} H_{\alpha} \longrightarrow 1,
\]
where the group $U$ is unchanged since $G/Z_G$ acts trivially on it (because $U$ is central in $G$). By \cite[Ch.\,III, \S 5.3, Thm.]{oesterle}, we have
\begin{equation}
\label{tam=1}
\tau(G)\cdot \# \left(\frac{H(\A)}{\pi(G(\A))H(k)}\right) = \tau(U) \tau(H)\cdot \# \ker(\Sha(j)),
\end{equation}
\begin{equation}
\label{tam=2}
\tau(G_{\alpha}) \cdot \# \left(\frac{H_{\alpha}(\A)}{\pi_{\alpha}(G_{\alpha}(\A))H_{\alpha}(k)}\right) = \tau(U) \tau(H_{\alpha})\cdot \# \ker(\Sha(j_{\alpha})).
\end{equation}
(We have used that $\widehat{U}(k) = 0$, since $U$ is unipotent. The finiteness of the coset spaces of adelic points is part of the statement of the cited result in \cite[Chap.\,III]{oesterle}, given the already-known finiteness of the other quantities appearing.) Strictly speaking, to make this conclusion we first need to verify that the subgroup $\pi(G(\A)) \subset H(\A)$ is normal, and similarly with the twisted map on adelic points. This follows from the fact that this image is the kernel of the map $H(\A) \rightarrow {\rm{H}}^1(\A, U)$, which is a group homomorphism because $U \subset G$ is central (where we are using \cite[Chap.\,I, \S 5.6, Cor.\,2]{serre} and \cite[Prop.\,1.5]{rospred}).

Let us first prove Theorem \ref{tauboundedinnerthm} for $G$. Using (\ref{tam=2}), we see that
\begin{eqnarray*}
\tau(G_{\alpha}) \leq \tau(G_{\alpha}) \cdot \# \left(\frac{H_{\alpha}(\A)}{\pi_{\alpha}(G_{\alpha}(\A))H_{\alpha}(k)}\right) 
&=& \tau(U) \tau(H_{\alpha}) \cdot \# \ker(\Sha(j_{\alpha}))\\
&\leq& \tau(U) \cdot \# \Sha(U) \cdot \tau(H_{\alpha}).
\end{eqnarray*}
The quantity $\tau(H_{\alpha})$ is bounded above independently of $\alpha$ (but of course depending on $G$) by Theorem \ref{tauboundedinnerthm} for $H$, which holds by induction, so we are done.

In order to prove Theorem \ref{tautwistpowerofpthm} for $G$, comparing (\ref{tam=1}) and (\ref{tam=2}) and using the fact that the proposition holds for $H$ by induction, we see that it suffices to show that the quantities $\# \left(\frac{H(\A)}{\pi(G(\A))H(k)}\right)$, $\# \left(\frac{H_{\alpha}(\A)}{\pi_{\alpha}(G_{\alpha}(\A))H_{\alpha}(k)}\right)$, $\# \ker(\Sha(j))$, and $\# \ker(\Sha(j_{\alpha}))$ are powers of $p$. We will show this for the ``untwisted'' quantities $\# \left(\frac{H(\A)}{\pi(G(\A))H(k)}\right)$ and $\# \ker(\Sha(j))$. The proofs for the twisted quantities are the same.

First, as we have already mentioned, the connecting map $H(\A) \rightarrow {\rm{H}}^1(\A, U)$ is a group homomorphism with kernel $\pi(G(\A))$. It follows that $H(\A)/\pi(G(\A))$ is a $p$-primary abelian group, since this holds for ${\rm{H}}^1(\A, U)$. Therefore, the finite quotient $H(\A)/\pi(G(\A))H(k)$ is a finite $p$-primary group, hence its order is a power of $p$. Similarly, in order to show that $\ker(\Sha(j)) \subset \Sha(U)$ has $p$-power order, it suffices to show that it is a subgroup. Since $U \subset G$ is central, we have an action of the group ${\rm{H}}^1(k, U)$ on the set ${\rm{H}}^1(k, G)$, which we will denote by $*$, such that the map ${\rm{H}}^1(k, U) \rightarrow {\rm{H}}^1(k, G)$ induced by the inclusion $U \hookrightarrow G$ is $\alpha \mapsto \alpha * 1$, where $1 \in {\rm{H}}^1(k, G)$ is the trivial element \cite[Chap.\,I, \S 5.7]{serre}. Therefore, $\ker(\Sha(j))$ is the intersection of $\Sha(U)$ with the stabilizer of $1 \in {\rm{H}}^1(k, G)$ for this action, hence it is a subgroup.
\end{proof}

\end{document}